\theoremstyle{plain}
\newtheorem{thm}{Theorem}[section]
\newtheorem{lem}[thm]{Lemma}
\newtheorem{defin}[thm]{Definition}
\newtheorem{prop}[thm]{Proposition}
\theoremstyle{definition}
\newtheorem{cor}[thm]{Corollary}
\newtheorem{rem}[thm]{Remark}
\newtheorem{ques}[thm]{Question}
\newcommand{\LipP}{Lip_1(X(P))}
\newcommand{\CalS}{\mathcal{S}}
\newcommand{\FF}{\hat{\mathcal{F}}}
\newcommand{\supp}{\text{supp}}
\newcommand{\future}[1]{{}}  
\begin{document}

\title{Weakly Mixing Polygonal Billiards}
\author{Jon Chaika} 
\author{Giovanni Forni}

\address{Department  of Mathematics\\
  University of Utah \\
  Salt Lake City, UT USA}

\address{Department  of Mathematics\\
  University of Maryland \\
  College Park, MD USA}
  
\email
    {chaika@math.utah.edu}
  \email
    {gforni@umd.edu}
    
 \keywords
      {Billiards in Polygons, Rational Polygonal Billiards, Weak Mixing Flows, Teichm\"uller flow,
      Moduli space of Abelian differentials.}
\subjclass[2010]
        {37A25, 37E35, 30F60, 32G15.}   
    
\begin{abstract}   We prove that  there exists a $G_\delta$ dense set of (non-rational) polygons such the billiard flow is weakly mixing with respect to the Liouville measure (on the unit tangent bundle to the billiard). This follows, via a Baire category argument, from showing that for any translation surface the product of the flows in almost every pair of directions is ergodic with respect to Lebesgue measure. This in turn is proven by showing that for every translation surface the flows in almost every pair of directions do not share non-trivial common eigenvalues. \end{abstract} 

\maketitle

\section{Introduction}

A basic question in ergodic theory is the dynamical properties of the billiard flow in various (planar) 
 domains (which we will call ``tables'') \cite{KaBil},   \cite{MT}, \cite{Tab}, \cite{CM}, \cite{Gut}. Frequently, the table is assumed to have piecewise $C^1$ boundary and the billiard is assumed to be a massless point traveling without friction on the interior of the table which experiences elastic collision when it hits the boundary of the table.  We will be concerned with such a system where the table is a polygon. The main result of this paper is,

\begin{thm}\label{thm:billiard} There exists a weakly mixing billiard flow in a polygon.
\end{thm}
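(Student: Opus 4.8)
The plan is to establish the stronger assertion that, in the space of polygons with a fixed number of sides (a Baire space), the set of polygons whose billiard flow is weakly mixing with respect to the Liouville measure is a dense $G_\delta$; Theorem~\ref{thm:billiard} then follows at once. Two ingredients are needed: a Baire category/approximation argument, and a purely dynamical input about translation surfaces. Recall first the Zemljakov--Katok unfolding: a \emph{rational} polygon $P_0$ (all angles rational multiples of $\pi$) has a finite dihedral reflection group of order $2N$, its billiard phase space $\mathcal{B}(P_0)$ fibers over the circle of reduced directions $[0,\pi/N)$, the billiard flow on each fiber is the translation flow in the corresponding direction on a fixed translation surface $X(P_0)$, and the Liouville measure disintegrates over the directions into the flat area measure on $X(P_0)$. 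Hence $\mathcal{B}(P_0)\times\mathcal{B}(P_0)$, with the product billiard flow, fibers over $[0,\pi/N)^2$ with fibers $X(P_0)\times X(P_0)$ carrying the product $\phi^{\theta_1}_t\times\phi^{\theta_2}_t$ of two translation flows.

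The translation-surface input I would establish (this is the bulk of the paper) is: for \emph{every} translation surface $X$, for Lebesgue-a.e.\ pair $(\theta_1,\theta_2)$ the translation flows $\phi^{\theta_1}_t$ and $\phi^{\theta_2}_t$ have no common non-zero eigenvalue. Combined with the classical theorem (Masur; Kerckhoff--Masur--Smillie) that for every translation surface a.e.\ directional flow is uniquely ergodic, and with the standard spectral fact that a product of two ergodic flows is ergodic precisely when their point spectra intersect only in $\{0\}$, this yields: for every translation surface $X$, for Lebesgue-a.e.\ $(\theta_1,\theta_2)$ the flow $\phi^{\theta_1}_t\times\phi^{\theta_2}_t$ is ergodic for the Lebesgue measure on $X\times X$.

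For the Baire argument, fix a countable family $\{F_i\}$ of functions, continuous on a common ambient domain, whose restrictions are dense in each $L^2(\mathcal{B}(P)\times\mathcal{B}(P))$, and encode ergodicity of $\phi^P_t\times\phi^P_t$ --- equivalently, weak mixing of $\phi^P_t$ --- by the countably many conditions
\[
\frac{1}{T}\int_0^{T}\Bigl|\,\bigl\langle F_i\circ(\phi^P_t\times\phi^P_t),\,F_j\bigr\rangle-\Bigl(\textstyle\int F_i\Bigr)\Bigl(\textstyle\int \overline{F_j}\Bigr)\Bigr|\,dt\;<\;\frac{1}{m},\qquad i,j\le m,\ T\in\mathbb{N}.
\]
On a fixed time interval the billiard flow in $P$ depends continuously on $P$ outside an arbitrarily small set of initial conditions, so each such condition is open in $P$, and weak mixing is a $G_\delta$ property (and automatically non-rational, since rational billiards are not even ergodic). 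For density, given any polygon approximate it by a rational polygon $P_0$; if $P$ is close enough to $P_0$ (depending on $T$) then, away from a small-measure set of initial conditions, billiard trajectories in $P$ over $[0,T]$ shadow billiard trajectories in $P_0$ whose direction is essentially constant --- the direction drift per reflection is of order $\|P-P_0\|$, and the number of reflections in time $T$ is controlled --- so the integrand above is close to the analogous quantity for $\phi^{\theta_1}_t\times\phi^{\theta_2}_t$ on $X(P_0)\times X(P_0)$, integrated over $(\theta_1,\theta_2)$. By the translation-surface input and dominated convergence this tends to $0$ as $T\to\infty$; hence the finitely many conditions cutting out any prescribed basic neighborhood inside the weakly mixing $G_\delta$ are satisfied by polygons arbitrarily close to $P_0$, and therefore to the given polygon.

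The main obstacle is the no-common-eigenvalue statement for \emph{every} translation surface (not merely almost every one, since rational polygons produce highly non-generic surfaces --- Veech surfaces and the like) and a.e.\ pair of directions. I would attack it by renormalization: via Rauzy--Veech induction and the Teichm\"uller geodesic flow, an eigenvalue $\mu$ of a directional flow is equivalent to an asymptotic (cohomological) condition on twisted Birkhoff sums of the induced interval exchange, in the spirit of the Veech and Forni weak-mixing criteria; a \emph{common} eigenvalue $\mu$ for two directions forces two such conditions to hold simultaneously for the same real number $\mu$, which one then rules out for Lebesgue-a.e.\ $(\theta_1,\theta_2)$ by a parameter count over the (one-dimensional) circles of directions combined with quantitative non-divergence and distortion estimates for the Kontsevich--Zorich cocycle along the associated Teichm\"uller geodesics. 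Making these estimates work for an \emph{individual} (possibly non-generic) surface, rather than for almost every surface in a stratum, is the crux of the argument.
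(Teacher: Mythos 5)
Your overall strategy is the same as the paper's: prove that for \emph{every} translation surface and a.e.\ pair of directions the two directional flows share no nontrivial eigenvalue (the paper's Theorem~\ref{thm:main}), deduce ergodicity of the product flow for a.e.\ pair (Theorem~\ref{thm:erg prod}), and then run a Baire category argument in the space of $n$-gons, with openness coming from Vorobets-type shadowing of billiard orbits under perturbation and density coming from rational approximants whose unfoldings are handled by the every-surface statement. The openness sketch and the identification of the crux (non-generic surfaces such as Veech/square-tiled ones, forcing an ``every surface'' theorem) match the paper.

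However, the density step as written has a genuine gap, in two respects. First, it is not enough to approximate by \emph{any} rational polygon $P_0$: the product billiard flow of a rational polygon is never ergodic (the pair of directions is invariant), and its ergodic components are products of unfolded surfaces sitting over a grid of $M\times M$ direction pairs, where $2M$ is the order of the dihedral group of $P_0$. Time averages therefore converge to \emph{fiber} averages, and these are within your tolerance $1/m$ of the full space averages $(\int F_i)(\int\overline{F_j})$ only if $M$ is large compared with $m$ and the moduli of continuity of $F_1,\dots,F_m$; for $P_0$ the square or the right isosceles triangle (small $M$) your finitely many conditions simply fail for every large $T$, no matter how close $P$ is to $P_0$. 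This is exactly what the paper's Corollary~\ref{cor:dense} legislates by requiring the reflection group to contain a rotation by $2\pi/M$ with $M\geq N_\epsilon$, and your argument must make the same choice (rational approximants with large denominator are still dense, so density survives, but the choice is essential and missing). Second, your finite-$T$ conditions place the absolute value \emph{inside} the time average, i.e.\ they encode weak mixing of the fiberwise product flows $\phi^{\theta_1}_t\times\phi^{\theta_2}_t$; that is strictly stronger than the ergodicity your no-common-eigenvalue input provides, and it genuinely fails fiberwise when $X(P_0)$ is a torus cover, where every direction has nonconstant eigenfunctions (a point the paper itself notes), so ``translation-surface input plus dominated convergence'' does not make that quantity small. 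The paper's formulation avoids this: the condition in Lemma~\ref{lem:open} is the $L^1$ norm over $X(P)$ of the deviation of the Birkhoff average of a $1$-Lipschitz function from its space average, for which fiberwise ergodicity plus large $M$ suffices. To repair your argument, either switch to that Birkhoff-average form of the conditions, or add a separate argument (e.g.\ a Riemann--Lebesgue/oscillatory estimate in the direction parameters) to kill the almost periodic part of the fiberwise correlations after averaging over $(\theta_1,\theta_2)$.
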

In fact, we answer a conjecture stated by E.~Gutkin and A.~Katok (see \cite{GK}, \S1)  that the set of polygonal tables with a weakly mixing billiard flow is a dense $G_\delta$ subset of  the appropriate space of all polygonal tables with a fixed number of vertices.

Let $P$ denote a polygon. The billiard flow is a flow $F^t:(P \times S^1)/\sim \,\to\, (P \times S^1)/\sim $, where $(p,\theta) \sim (q,\psi)$ if $p=q \in \partial P$ and  the angle between $\theta$ and the side of $P$ at $p$ is $\pi$ minus the angle between $\psi$ and the side of $P$ at $p$. This flow is defined for any orbit that does not orbit into the vertices of $P$. This flow preserves  the Lebesgue measure on $P\times S^1$, defined as the product of the  Lebesgue measure on $P$ times the Lebesgue measure on $S^1$ (pushed forward by $\sim$). 

A natural dynamical consequence of this result is  that there exists a polygon $P$ of area $1$ so that for any rectangles $R, R' \subset P$ and for any intervals $I$, $I'\subset S^1$,  for almost every $p, p' \in P$ and $\theta,\psi\in S^1$ 
we have \begin{multline*}\label{eq:conseq}
\frac{|\{0\leq t\leq T:F^t(p,\theta)\in R \times I  \text{ and }  F^t(p',\psi) \in R'\times I'\}|}{T} \rightarrow \\ |R||I|\cdot |R'|  |I'| .
\end{multline*}
 
S.~Kerckhoff, H.~Masur and J.~Smillie proved that there were ergodic billiard flows in polygons~\cite{KMS}. The significant general results about billiard flows in polygons are C.~Boldrighini, M.~Keane and F.~Marchetti's result that they have at most a countable set of directions containing periodic trajectories \cite{BKM} and A.~Katok's result that the billiard flow has zero entropy \cite[Section 3]{Ka87}. By a result of A.~Katok \cite[Section 3]{Ka87}  the zero entropy property implies a subexponential bound on the growth of complexity, in particular for the counting function of generalized diagonals and periodic orbits. Recently, D.~Scheglov \cite{Sch13}, \cite{Sch} improved this bound in the case of almost all triangles to ``weakly exponential''. We recall that H.~Masur~\cite{Ms90} showed the counting function of generalized diagonals grows quadratically for rational tables, and it has been conjectured that polynomial bounds should hold for general typical polygons (\cite{Ka87}, \S4).   
The most significant recent results on the ergodic properties of billiards in polygons, all of which are in different contexts, are the results of 
A.~Avila and V.~Delecroix \cite{AD} and D.~Aulicino, A.~Avila and V.~Delecroix~\cite{AAD}, which prove weak mixing {\it in almost all directions} in certain {\it rational polygons} and the proof by A.~M\'alaga and S.~Troubetzkoy \cite{MaTr} of the weak mixing property in almost all directions for rational billiards in generic {\it polygons with vertical - horizontal sides}.  In these cases the unit tangent bundle splits into invariant surfaces, and the weak mixing property is proved with respect to the two dimensional measure on the generic invariant surface, as opposed to our setting in which we prove weak mixing 
 with respect to the ($3$-dimensional) Liouville measure   on the unit tangent bundle. Another recent result is the proof by J.~Bobok and S.~Troubetzkoy \cite{BoTr} of {\it topological  weak mixing} for the billiard map  (that is, for  the first return map to the sides of the polygon, which is a $\mathbb{Z}$-action, as opposed to the  billiard flows we consider) in the generic polygon.

Similar to the proof of existence of ergodic billiards \cite{KMS}, and of the {\it directional} weak mixing results of \cite{GK}, our result is obtained from a result about every translation surface via a Baire category argument. A translation surface is a pair $(X,\omega)$ where $X$ is a Riemann surface and $\omega$ is an Abelian differential. 
From $\omega$ we obtain an $S^1$ family of vector fields on $X$, which in turn give flows $F_{r_\theta \omega}^t$ on $X$ and a Lebesgue measure 
on $X$. We denote  this normalized Lebesgue measure $\lambda^2$, regardless of the surface, which is preserved by these flows. 

\begin{thm}\label{thm:erg prod} For a.e. $(\theta, \phi) \in S^1\times S^1$ the flow $F_{r_\theta \omega}^t \times  F_{r_\phi \omega}^t $ is $\lambda^2 \times \lambda^2$ ergodic.
\end{thm}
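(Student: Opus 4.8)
\noindent\emph{Proof strategy.}
The plan is to deduce ergodicity of the product from the absence of common eigenvalues, to prove that for a.e.\ pair of directions by a Fubini argument that isolates one prospective eigenvalue at a time, and then to kill a fixed eigenvalue by a renormalization estimate that is uniform over all translation surfaces.

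\textit{Step 1: reduction to common eigenvalues.} By the Kerckhoff--Masur--Smillie theorem, for every translation surface $(X,\omega)$ the flow $F^t_{r_\theta\omega}$ is uniquely ergodic for a.e.\ $\theta\in S^1$, hence for a.e.\ $(\theta,\phi)$ both factors are ergodic. By the classical criterion that a product of two ergodic measure-preserving flows is ergodic whenever they have no common nonzero eigenvalue (the usual argument: split $L^2(X)\otimes L^2(X)$ along the Kronecker factor of the first coordinate; on its orthocomplement the product has no nonzero invariant vectors by weak mixing, and on the Kronecker part an invariant vector would pair an eigenfunction of the first factor with an eigenfunction of the second for the opposite eigenvalue), it suffices to show that for a.e.\ $(\theta,\phi)$ the eigenvalue groups $E_\theta,E_\phi\subset\mathbb{R}$ of $F^t_{r_\theta\omega}$ and $F^t_{r_\phi\omega}$ satisfy $E_\theta\cap E_\phi=\{0\}$.

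\textit{Step 2: isolating a single eigenvalue.} For every measure-preserving flow the point spectrum is countable, so $E_\theta$ is countable for every $\theta$. Granting that $\{(\psi,s)\in S^1\times\mathbb{R}:s\in E_\psi\}$ is measurable --- which one checks by writing $s\in E_\psi$ in terms of the spectral projections $f\mapsto\lim_{T\to\infty}\tfrac1T\int_0^T e^{-2\pi ist}\,f\circ F^t_{r_\psi\omega}\,dt$ applied to a countable dense family of $f$'s, using joint measurability of $(\psi,t,x)\mapsto F^t_{r_\psi\omega}(x)$ --- the $\theta$-section of the bad set $\{(\theta,\phi):E_\theta\cap E_\phi\neq\{0\}\}$ equals $\bigcup_{s\in E_\theta\setminus\{0\}}A_s$, where $A_s:=\{\phi\in S^1:s\in E_\phi\}$, a \emph{countable} union. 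Hence, once one knows that $A_s$ has Lebesgue measure zero for every $s\neq 0$, every $\theta$-section of the bad set is Lebesgue-null, and Fubini gives that the bad set has product measure zero. So Theorem~\ref{thm:erg prod} reduces to: \emph{for every translation surface and every $s\neq 0$, the set of directions $\phi$ for which $s$ is an eigenvalue of $F^t_{r_\phi\omega}$ has Lebesgue measure zero.}

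\textit{Step 3: killing a fixed eigenvalue, and the main obstacle.} For this I would use the renormalization description of eigenvalues of translation flows (Veech's criterion and its refinements by Forni and by Avila--Forni): realizing $F^t_{r_\phi\omega}$ as a suspension over a Rauzy--Veech induction with renormalization matrices and height data $h^{(n)}(\phi)$, the relation ``$s$ is an eigenvalue'' forces the Diophantine condition that the vectors $s\,h^{(n)}(\phi)$ be rapidly approximated by integer vectors along the renormalization sequence. One then shows that, for a \emph{fixed} $s\neq 0$, the set of $\phi$ meeting this condition is null: this is a shrinking-target / Borel--Cantelli statement along the Teichm\"uller geodesic flow, where for a.e.\ $\phi$ the orbit $g_t r_\phi\omega$ is recurrent to a fixed compact subset of the stratum (Masur's criterion, valid on every surface), and one must prove that along such an orbit the quantities $\|s\,h^{(n)}(\phi)\|_{\mathbb{R}^d/\mathbb{Z}^d}$ fail to be small for infinitely many $n$, for a.e.\ $\phi$. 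Because the conclusion has to hold on \emph{every} translation surface, one cannot appeal to positivity of Lyapunov exponents, to ergodicity of the Teichm\"uller flow, or to genericity in moduli space; instead one builds a combinatorial/probabilistic model of admissible Rauzy--Veech paths --- this is what the auxiliary measure $\measure$ and the inductive construction organized by a scale parameter are for, with growth, decay, and separation rates carefully matched --- so that at each scale there is an independent, quantitatively controlled chance that $\|s\,h^{(n)}(\phi)\|$ is not small. The entire difficulty is concentrated here: producing uniform-over-all-translation-surfaces effective recurrence together with a ``decoupling'' of successive induction steps, with explicit constants, strong enough to run the Borel--Cantelli divergence; Steps 1 and 2 are soft by comparison.
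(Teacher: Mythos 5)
Your Steps 1 and 2 are correct and are exactly the paper's own (very short) derivation of this theorem: ergodicity of each factor in a.e.\ direction comes from Kerckhoff--Masur--Smillie, the product criterion is the standard spectral fact (the paper cites \cite{KT}, Prop.~4.2), and the countability-of-point-spectrum/Fubini argument reduces everything to the statement that for every translation surface and every fixed $\alpha\neq 0$ the set of directions $\theta$ for which $e^{2\pi\imath\alpha}$ is an eigenvalue of $F^t_{r_\theta\omega}$ has measure zero --- which is precisely Theorem~\ref{thm:main}. Up to that point your proposal and the paper coincide.

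The genuine gap is Step 3: the fixed-eigenvalue statement is the main theorem of the paper, its proof occupies Sections 3--6, and your proposal does not prove it --- it only names a strategy and concedes that ``the entire difficulty is concentrated here.'' Moreover, the strategy you sketch (a combinatorial/probabilistic model of admissible Rauzy--Veech paths with an auxiliary measure and matched growth/decay/separation rates, run through a Borel--Cantelli argument) is not the mechanism the paper uses, and as stated it does not address the two issues the paper is organized around. First, the quantity that must be kept away from the integer lattice is the Kontsevich--Zorich image of $\alpha[\Im(r_\theta\omega)]$ (via the Veech criterion, Lemma~\ref{lem:v crit}), and a fixed surface offers no Oseledets or ergodicity input in the $\theta$-aisle; the paper replaces this with Birkhoff genericity of every surface in a.e.\ direction \cite{CE}, the Eskin--Mirzakhani--Mohammadi equidistribution and height-function machinery (Theorems~\ref{thm:EMM 2.7}, \ref{thm:EMM 2.13}), a \emph{new} large-deviation estimate along horocycle arcs for slow growth of the cocycle (Lemma~\ref{lem:key step}), and the bootstrap Proposition~\ref{prop:better}, which upgrades a small distance-to-$\mathbb{Z}$ at one time to a definite lower bound at a controlled later time while returning to a fixed compact set --- this is what makes a Borel--Cantelli-type iteration possible at all. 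Second, one must separately dispose of the directions where the relevant class falls into an integer translate of the stable $SL(2,\mathbb{R})$-subbundle or into an isometric subbundle (Lemma~\ref{lem:transv}, Proposition~\ref{prop:central}, Theorem~\ref{thm:big quote}); your sketch does not mention these cases, and no ``independence of scales'' model is constructed in the paper (the preamble macros you allude to are unused remnants). So as a proof of Theorem~\ref{thm:erg prod} the proposal is complete only in its soft reduction; the substantive input it relies on is left unproved.
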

P.~Hubert and the first named author \cite{CH} previously showed that for almost every surface (with respect to any $SL(2,\mathbb{R})$-invariant measure) the product of the flow in almost every direction is uniquely ergodic. 
Our methods for establishing Theorem \ref{thm:erg prod} are spectral, as we show the following, which because the straight line flow on any translation surface is $\lambda^2$ ergodic  in almost every direction \cite{KMS}, is well known to imply Theorem~\ref{thm:erg prod}   (see for instance \cite{KT}, Prop. 4.2):
\begin{thm}\label{thm:main}For every $\alpha\neq 0 $ and every translation surface $\omega$ we have $|\{\theta \in S^1:F_{r_\theta \omega}^t \text{ has eigenvalue } \alpha  \}|=0$\,.
\end{thm}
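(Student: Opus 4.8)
The plan is to reduce the statement to a quantitative equidistribution question for the Kontsevich--Zorich cocycle along the circle of directions, and then run a Borel--Cantelli-type exclusion. The first step is a spectral criterion: realizing the vertical flow on $r_\theta\omega$ as a suspension over an interval exchange via Rauzy--Veech induction, the existence of an eigenvalue $e^{2\pi\imath\alpha}$ forces the Birkhoff sums of $\alpha$ times the roof function over the Rauzy--Veech towers to converge to integers. Equivalently, writing $h_s(\theta)\in\mathbb{R}^d$ for the vector of tower heights at renormalization stage $s$ (the pairing of the vertical cohomology class $\alpha[\operatorname{Im}(r_\theta\omega)]$ with the stage-$s$ integral homology frame), one must have $\operatorname{dist}(h_s(\theta),\mathbb{Z}^d)\to 0$ as $s\to\infty$; this is Veech's criterion in the form exploited by Avila and Forni. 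I would record the finitary version as well: for every $\epsilon>0$ and every sufficiently large Teichm\"uller time $t$, the renormalized vertical class $c_{e^t}(\theta)$ at time $t$ lies within $\epsilon$ of $H^1(X;\mathbb{Z})$ in the frame adapted to $g_t r_\theta\omega$.

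Since $\omega$ need not be generic for any $SL(2,\mathbb{R})$-invariant measure, the second step is to invoke the theorem of Chaika and Eskin: for Lebesgue-a.e.\ $\theta$ the forward geodesic $(g_t r_\theta\omega)_{t\ge 0}$ equidistributes with respect to the affine $SL(2,\mathbb{R})$-invariant ergodic probability measure $\nu$ on the orbit closure $\overline{SL(2,\mathbb{R})\omega}$ and is Oseledets-generic for the Kontsevich--Zorich cocycle over $\nu$. It then suffices to prove the claim for this full-$\theta$-measure set of \emph{generic} directions, along which all relevant cocycle growth rates and distortion estimates are governed by $\nu$.

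For the exclusion, let $B_\alpha$ be the set of $\theta$ at which $e^{2\pi\imath\alpha}$ is an eigenvalue. Step one gives, for each $\epsilon>0$, $B_\alpha\subseteq\bigcup_{T_0}\bigcap_{T\ge T_0}G_{T,\epsilon}$, where $G_{T,\epsilon}=\{\theta:\operatorname{dist}(\alpha c_T(\theta),\mathbb{Z}^d)<\epsilon\}$; since these unions are nested it is enough to produce, for each fixed $\epsilon$, a sequence $T_n\to\infty$ with $|G_{T_n,\epsilon}|\to 0$. I would deduce this from a non-concentration statement: the push-forward of normalized length measure on an arc of directions under $\theta\mapsto\alpha c_T(\theta)\bmod\mathbb{Z}^d$ does not charge the $\epsilon$-ball about $0$ by more than $C\epsilon^{d}+o(1)$ as $T\to\infty$. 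The available input is the hyperbolicity, present for every $\nu$, of the cocycle on the two-dimensional tautological subspace $\langle[\operatorname{Re}\omega],[\operatorname{Im}\omega]\rangle$ (Lyapunov exponents $\pm1$), which makes $\theta\mapsto c_T(\theta)$ expand at a definite rate transverse to the weak-stable foliation of $0$, combined with the equidistribution of the pushed circle from step two to handle the error term; a Borel--Cantelli argument over a dyadic sequence of times then yields $|B_\alpha|=0$. (The passage to Theorem~\ref{thm:erg prod} is then routine: the point spectrum of an ergodic flow on the separable space $L^2(X,\lambda^2)$ is countable, so for a.e.\ $\phi$ one removes a countable union of null sets of $\theta$.)

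The main obstacle is the non-concentration estimate with uniformity over all surfaces $\omega$ --- in particular over those whose Kontsevich--Zorich spectrum degenerates (the torus, Veech surfaces, affine submanifolds with vanishing exponents), where there is no hyperbolicity beyond the tautological plane and only the single real parameter $\theta$ to exclude over, rather than the full simplex of length data used by Avila and Forni. Promoting the qualitative Chaika--Eskin genericity to a quantitative, scale-$\epsilon$ equidistribution of the cocycle value $\alpha c_T(\theta)\bmod\mathbb{Z}^d$ along the circle of directions is the technical heart of the argument, and presumably requires a careful analysis of Rauzy--Veech renormalization along the circle together with recurrence and distortion estimates for the Teichm\"uller geodesic flow.
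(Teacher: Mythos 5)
There is a genuine gap: the step you yourself flag as ``the main obstacle'' --- the non-concentration estimate for $\theta\mapsto \alpha c_T(\theta)\bmod \mathbb{Z}^d$ --- is not a technical loose end but is precisely the content of the paper, and the mechanism you propose for it would not work. Your only source of expansion is the hyperbolicity of the cocycle on the tautological plane $\langle[\Re\omega],[\Im\omega]\rangle$ (exponents $\pm1$). But the quantity controlled by the Veech criterion is the distance of $\alpha\,KZ(g_t,r_\theta\omega)[\Im(r_\theta\omega)]$ to the \emph{integer lattice}: once you subtract the nearby integer class $z$, the vector $\alpha[\Im(r_\theta\omega)]-z$ generically has components in the symplectic complement $\FF$ of the tautological plane, and its fate under the cocycle is governed by the exponents there, not by $\pm1$. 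This is exactly the ``weak stable space'' problem (cf.\ the outline in Section~\ref{subsec:outline}): on subbundles of $\FF$ where the cocycle acts isometrically there is no expansion at all, and no equidistribution statement alone forces non-concentration near the lattice there --- the paper has to handle this case by a separate rigidity argument (Proposition~\ref{prop:central}), after splitting $\FF$ using Theorem~\ref{thm:big quote}. On the strongly irreducible pieces with a positive exponent, getting growth of the distance to the lattice \emph{for every} $\omega$ (not just $\nu$-generic ones), at times when the geodesic returns to a fixed compact set, is what requires the Chaika--Eskin growth mechanism for nearly parallel horocycle sections (Proposition~\ref{prop:growth mechanism}), the new large deviations estimate along horocycle arcs valid for all base points (Lemma~\ref{lem:key step}, built on the Eskin--Mirzakhani--Mohammadi height functions), and the iteration scheme of Proposition~\ref{prop:better}, which replaces any single-scale ``$C\epsilon^d+o(1)$'' bound by the statement that for most nearby directions the lattice distance climbs from $\gamma$ to roughly $\gamma^{\rho}$ at a controlled later compact-return time, and can then be iterated up to a fixed threshold. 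Promoting Chaika--Eskin Birkhoff/Oseledets genericity to such a quantitative statement is not routine, and your sketch supplies no substitute for it.

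A secondary but real issue is your finitary version of the Veech criterion: the existence of the eigenvalue does \emph{not} force the renormalized class to be $\epsilon$-close to $H^1(X;\mathbb{Z})$ for \emph{all} sufficiently large Teichm\"uller times. Lemma~\ref{lem:v crit} only yields decay of $\Vert KZ(g_{t_i},\omega)(\alpha\Im\omega)\Vert_{\mathbb Z}$ along times $t_i$ at which $g_{t_i}\omega$, $g_{t_i\pm\ell}\omega$ all lie in a fixed compact set (these buffer times are what guarantee the transversal and return-time geometry needed for the criterion). Consequently your inclusion $B_\alpha\subseteq\bigcup_{T_0}\bigcap_{T\ge T_0}G_{T,\epsilon}$ is unjustified as stated; one must work only at compact-return times, which is again where the paper's combination of Lemma~\ref{lem:in compact}, Corollary~\ref{cor:ath} and the Lebesgue-density/contradiction argument in Section~\ref{sec:main proof} (together with the transversality Lemma~\ref{lem:transv} to dispose of the stable tautological direction) does the work.
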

A.~Avila and the second named author \cite{AF} showed that for almost every translation surface (in genus at least $2$) the flow in almost every direction is weakly mixing, which implies the above result for almost every surface. In fact, by an announcement of D.~Aulicino, A.~Avila and V.~Delecroix \cite{AAD}, Theorem \ref{thm:main} holds for almost every surface with respect to every $SL(2,\mathbb{R})$ invariant measure. (They show that for any $SL(2,\mathbb{R})$ ergodic measure, which is not supported on branched covers of tori, the vertical flow on almost every surface is weakly mixing. This gives Theorem \ref{thm:erg prod} for these measures.
By an argument based on rigidity sequences, one can show that Theorem~\ref{thm:main} holds for any branched cover of a torus. In fact, one can prove, see for instance \cite{FH19}, that for any $\alpha\in \mathbb R\setminus \{0\}$ and for the flow in almost every direction on the branched cover of the torus, there exists a rigidity sequence $t_j$  so that $e^{2\pi \imath \alpha t_j}$ does not converge to $1$). 

Note, it is well known that single cylinder surfaces give many examples of surfaces (even with full orbit closure)  where every direction has 
a non-constant eigenfunction.

\subsection{Organization of the paper}

In Section~\ref{subsec:open} below we gather some fundamental open questions in the  ergodic theory
of billiards in polygons. In Section~\ref{sec:background} we recall some basic material about the Teichm\"uller  flow and the renormalization cocycle for translation flows, the Kontsevich--Zorich cocycle over the Teichm\"uller flow on the Hodge bundle. We conclude the section, in Section~\ref{subsec:outline}, with an outline of our argument. In Section~\ref{sec:grow} we derive several consequences of the work of  \cite{EskMir}, \cite{EskMirMo} and \cite{CE},  including results  on the growth of vectors under the action of the Kontsevich--Zorich cocycle   and on averages along horocycle arcs of the pushforward of ``height'' functions (functions as in Theorem \ref{thm:EMM 2.13})  for the moduli space by the geodesic flow. 
Section \ref{sec:lip and stuff} contains, in Section~\ref{subsec:curve_grow}, preparatory results on the growth of curves in the Hodge bundle, which are derived from results of the previous section, and, in Section~\ref{subsec:largedev}, some standard large deviation results, which are included for convenience of the reader. 
Section~\ref{sec:key prop} contains the core of the argument, that is, the key proposition on the controlled growth of curves under
the Kontsevich-Zorich cocycle (Prop.~\ref{prop:better}).  In Section~\ref{sec:main proof} after recalling Veech's criterion for weak mixing, we prove our main result, Theorem~\ref{thm:main}. Finally, in Section~\ref{sec:billiard} we derive our result on the existence of weakly mixing polygonal billiards.

\subsection{Open questions}
\label{subsec:open} 

\begin{ques} Does every translation surface $\omega$ where the flow is not weakly mixing in almost every direction have the property that the line $\mathbb R [\Im(r_\theta\omega)]$ spanned by the cohomology class $[\Im(r_\theta\omega)]$ of the imaginary part of its rotation $r_\theta\omega$ intersects an integer translate of the stable subspace of the $SL(2,\mathbb{R})$ subbundle in a set of directions $\theta\in S^1$ of  positive measure?
\end{ques}
\begin{ques} For every translation surface is the flow in almost every pair of directions uniquely ergodic? Spectrally singular (modulo constants)?
\end{ques}

There are also many natural questions about the flow of billiards in irrational polygons.
\begin{ques}Is there a polygon with a mixing billiard flow? Topologically mixing? Minimal? Is the billiard flow in every irrational polygon ergodic? Weak mixing? Mixing? 
\end{ques}
Note that numerical experiments~\cite{CP} suggest that every billiard in an irrational triangle is mixing.

\subsection{Acknowledgments}
The first named author  thanks DMS-1452762, a Poincar\'{e} chair, a Sloan fellowship and a Warnock chair for support. The first named author also thanks the University of Maryland for its hospitality. 
The second named author  was supported by the NSF grant DMS 1600687 and by a Research Chair of the Fondation Sciences Math\'ematiques de Paris (FSMP). He would also like to thank the University of Utah for its hospitality. 
The authors would like to thank CIRM, IHP, University of Zurich, ETH Zurich and Oberwolfach for their hospitality during work on this project. The authors thank the referee for helpful comments which improved the paper.

\section{Background}
\label{sec:background}
As stated earlier, a translation surface is a pair $(X,\omega)$ where $X$ is a finite type Riemann surface and $\omega$ is an Abelian differential. 
The space of translation surfaces is stratified by the order and number of the zeros of the Abelian differential. Let $\mathcal{H}(\alpha)$ be the moduli space of translation surfaces 
$(X, \omega)$ such that $\omega$ has $k$ zeros of orders $\alpha:=(\alpha_1, \dots, \alpha_k)$, which we call a stratum. The (real) Hodge bundle of the stratum is a bundle over each stratum whose fiber at $(X,\omega)$ is 
$H^1(X;\mathbb{R})$. 
Given a translation surface $(X,\omega)$, integrating $\omega$ provides charts for $X \setminus \Sigma$ where $\Sigma$ is the set of zeros of $\omega$. From its action on these charts, the group $SL(2,\mathbb{R})$ of real matrices with determinant one acts on each translation surface, preserving the stratum it's in. 

The group $SL(2,\mathbb{R})$ also acts on the Hodge bundle since it acts on the base $\mathcal{H}(\alpha)$ of the bundle
and this action can be lifted to the bundle by parallel transport of cohomology classes with respect to a natural
flat connection. 

The Kontsevich-Zorich cocycle is given  by the action of $SL(2,\mathbb{R})$  on the Hodge bundle. It is a cocycle over
the action of the group $SL(2,\mathbb{R})$ on the stratum  $\mathcal{H}(\alpha)$.  
See the lecture notes of the second named author and Carlos Matheus~\cite{FM}  for a detailed description of the above material.

Let $$h_s=\begin{pmatrix}1&s\\0&1\end{pmatrix}, \, \hat{h}_s=\begin{pmatrix}1&0\\s&1\end{pmatrix}, \, 
 g_t=\begin{pmatrix}e^{t}&0\\0&e^{-t}\end{pmatrix},$$ 
denote, respectively, the Teichm\"uller horocycle flows and the  Teichm\"uller geodesic flow. Also let
$$
r_\theta=\begin{pmatrix}\cos(\theta)&-\sin(\theta)\\ \sin(\theta)&\cos(\theta)\end{pmatrix}
$$
denote the group of rotations. 

Let $I\subset \mathbb {R}$ be any interval. Given a horocycle arc 
$\{ h_s \omega \vert s \in I \}$ at $\omega \in \mathcal H(\alpha)$, a horocycle section at $\omega$ is a map  $\phi: I \to H^1(M, \mathbb{R})$ such that $\phi(s) \in H^1_{h_s\omega} (M, \mathbb{R})$. Here $H^1(M, \mathbb{R})$ denotes 
the  Hodge bundle over the moduli space of Abelian differentials  and $H^1_\omega(M,\mathbb{R})$ denotes the fiber at $\omega \in \mathcal H(\alpha)$.

For every $s\in I$, let $\pi_{\omega} (s) : H^1_{h_s\omega} (M, \mathbb{R}) \to H^1_{\omega} (M, \mathbb{R})$ denote the
linear map given by the parallel transport along the horocycle arc joining $h_s\omega$ to $\omega$, that is, 
$\{h_{s-\sigma} \omega\vert \sigma \in [0,s]\}$. Given a horocycle section $\phi: I  \to H^1(M, \mathbb{R})$,
the curve $\phi_\omega: I \to H^1_\omega (M, \mathbb {R})$ is defined as the composition
$$
\phi_\omega (s) =  \pi_\omega(s) \circ \phi (s) \,, \quad \text { for all } s \in I\,.
$$
Since the Kontsevich--Zorich cocycle is defined by parallel transport, the following commutation relation holds:
for all $v \in H^1_{h_s\omega} (M, \mathbb R)$,
$$
\pi_{g_t\omega} (e^{2t}s) \circ  KZ(g_{t},h_s\omega) v =      KZ (g_t, \omega) \circ \pi_\omega(s) v
\,.
$$
It follows that for any horocycle section $\phi:I \to H^1(M, \mathbb R)$ at $\omega$ we have the identity
$$
KZ(g_{t},h_s\omega) \phi(s) = \pi_{g_t\omega}^{-1}  (e^{2t}s) KZ (g_t, \omega)\phi_\omega(s)\,.
$$
In other terms, since $g_t h_s \omega =  h_{e^{2t} s} g_t \omega$  the section
$$
KZ(g_{t},h_{e^{-2t}s}\omega) \phi(e^{-2t}s) = \pi_{g_t\omega}^{-1} (s) KZ (g_t, \omega) \phi_\omega(e^{-2t}s)
$$
is a horocycle section at $g_t\omega$, hence in order to compute the evolution of horocycle sections under the
Teichm\"uller flow it is enough to compute the evolution of curves $\phi_\omega: I \to H^1_\omega(M, \mathbb R)$
under the maps 
$$
\phi_\omega(s) \to    KZ (g_t, \omega) \phi_\omega(e^{-2t}s)\,.
$$
Let $|\cdot|$ denote the Hodge norm.   We recall that the Hodge norm of a cohomology class
$\gamma \in H^1(X, \mathbb C)$  is given by 
$$|\gamma|= \sqrt{ \frac{1}{2} \int_X \gamma \wedge *  \overline {\gamma}} \,,$$ where $*$ denotes the Hodge star operator. This norm depends on the Riemann surface (but not the Abelian differential), though we usually suppress this dependence. See the survey \cite{FMZ} for a detailed description.  For every $g\in SL(2, \mathbb R)$ let $\|KZ(g,\omega)\|$ denote the operator norm with respect to the Hodge norms on $H_\omega^1(M,\mathbb{R})$ and $H_{g\omega}^1(M,\mathbb{R})$. If $\mathcal{F} \subset H^1(M, \mathbb R)$ is a subbundle of the (real) Hodge bundle, let $\|KZ(g,\omega)\|_{\mathcal{F}}$ denote the operator norm with respect to the Hodge norm restricted to $\mathcal{F}$.

\begin{lem} 
\label{lem:KZbound}
Let $\text{dist}$ denote the hyperbolic distance on $SL(2,\mathbb{R})/SO(2)$.   For any Abelian differential $\omega\in \mathcal{H}(\alpha)$ and for all $g\in SL(2,\mathbb{R})$, we have $\|KZ(g ,\omega)\|\leq e^{\text{dist}(g, Id)}$ 
and moreover $|KZ(g,\omega)v|\geq e^{-\text{dist}(g, Id)}|v|$.
\end{lem}
\begin{proof} For $g=g_t$ a diagonal element of $SL(2, \mathbb R)$, the upper bound holds for the Hodge norm by the first variation formulas~\cite[\S 2]{Fo} or \cite[\S 3.5]{FM}. The lower bounds also follows  since $KZ(g_t,\omega)$ is invertible with inverse $K(g_{-t}, g_t\omega)$, hence for $t\geq 0$
$$
|v| = |  KZ(g_{-t}, g_t\omega)     KZ(g_t,\omega)v | \leq e^t   | KZ(g_t,\omega)v | \,.
$$
For a general $g \in SL(2, \mathbb R)$, the result follows from the $KAK$ decomposition of $SL(2, \mathbb R)$,
since the action on the Hodge bundle of the group $K= SO(2) \subset SL(2, \mathbb R)$ of rotations  is isometric. 
\end{proof}

\begin{lem}\label{lem:in compact} For every $c>0$ there is a compact set $\mathcal{K}$ and $t_0>0$ so that if $\omega \in \mathcal{K}$ then 
$|\{-1\leq s \leq 1:g_th_s\omega\in \mathcal{K} \}|>2-c$ for all $t>t_0$. 
\end{lem}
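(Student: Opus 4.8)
The statement says: for every $c>0$ there is a compact set $\mathcal K$ and $t_0>0$ so that for $\omega\in\mathcal K$ and all $t>t_0$, the horocycle arc $\{g_t h_s\omega : s\in[-1,1]\}$ spends all but $c$ of its length in $\mathcal K$. The plan is to produce such a $\mathcal K$ as a sublevel set of a suitable proper function on the stratum, and to control the measure of the ``bad'' part of the pushed arc using an averaging (integral) estimate for that function along expanding horocycles.

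First I would invoke the standard non-divergence / recurrence machinery for the $SL(2,\mathbb R)$ action on strata. Concretely, there is a proper function $\Delta:\mathcal H(\alpha)\to[1,\infty)$ (for instance the reciprocal of the length of the shortest saddle connection, or an Eskin--Masur / Eskin--Margulis--Mozes style ``height'' function as in Theorem~\ref{thm:EMM 2.13}) whose sublevel sets $\{\Delta\le L\}$ are compact and which satisfies a log-integrability bound along horocycles: there are constants so that for any $\omega$ and any interval $I$,
$$
\int_I \log\Delta(h_s\omega)\,ds \;\le\; C_1 |I| \log\Delta(\omega) + C_2|I| \,.
$$
Such an inequality — a quantitative form of the Dani--Margulis / Minsky--Weiss non-divergence estimate, or one of the averaging estimates the paper extracts in Section~\ref{sec:grow} from \cite{EskMir}, \cite{EskMirMo}, \cite{CE} — is exactly of the type available to us; I would cite the version already recorded earlier in the paper. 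The key point is that pushing by $g_t$ reparametrizes the horocycle: $g_t h_s\omega = h_{e^{2t}s} g_t\omega$, so the arc $\{g_t h_s\omega : s\in[-1,1]\}$ is the horocycle arc of length $2e^{2t}$ through $g_t\omega$.

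Next I would combine the integral bound with a Chebyshev argument and a choice of starting compact set. Fix $L_0$ large and let $\mathcal K_0=\{\Delta\le L_0\}$; for $\omega\in\mathcal K_0$ the quantity $\log\Delta(\omega)$ is bounded by $\log L_0$. Apply the integral bound over $I=[-e^{2t},e^{2t}]$ to $g_t\omega$ — but here I need an estimate not depending on where $g_t\omega$ sits, so I instead apply it directly over $[-1,1]$ to $\omega\in\mathcal K_0$ and then push forward: by the flow-smoothing/averaging estimate of Section~\ref{sec:grow} (the pushforward-of-height-functions statement), for $t$ large the measure of $\{s\in[-1,1] : \Delta(g_t h_s\omega) > L\}$ is at most $(C_1\log L_0 + C_2)/\log L$, uniformly in $\omega\in\mathcal K_0$ and in $t>t_0(L)$. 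Given $c>0$, first pick $L$ so that $(C_1\log L_0+C_2)/\log L < c$, then set $\mathcal K = \{\Delta\le L\}$ and take $t_0$ from the averaging estimate; since $\mathcal K_0\subset\mathcal K$, the hypothesis $\omega\in\mathcal K$ is at least as strong as $\omega\in\mathcal K_0$ after possibly shrinking (one should be slightly careful and instead phrase the bad-set estimate for $\omega$ in the larger set $\mathcal K$ directly, which the averaging estimate also allows since $\log\Delta$ is still bounded there). Then $|\{s\in[-1,1]: g_t h_s\omega\notin\mathcal K\}| < c$, i.e. $|\{-1\le s\le 1 : g_t h_s\omega\in\mathcal K\}| > 2-c$, as desired.

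The main obstacle is getting the estimate to be genuinely \emph{uniform over $\omega$ in a compact set} and over all large $t$ simultaneously, rather than just for a.e.\ $\omega$ or for a single $\omega$: a naive pointwise ergodic or equidistribution statement would only give the conclusion for typical $\omega$ and typical $t$. This is precisely why one wants the quantitative averaging/non-divergence inputs (which give honest inequalities with explicit constants depending only on the stratum and on $\sup_{\mathcal K}\log\Delta$) rather than soft dynamical statements; I would lean on the consequences of \cite{EskMir,EskMirMo,CE} recorded in Section~\ref{sec:grow} to supply exactly this uniformity, and the rest is a routine Chebyshev estimate and choice of constants.
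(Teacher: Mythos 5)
Your plan is correct and follows essentially the same architecture as the paper's proof: take a proper ``height'' function with compact sublevel sets, use an averaging estimate for its pushforward by $g_t$ along arcs whose leading coefficient is small (or decays in $t$), and conclude by Markov's inequality with $\mathcal K$ a large sublevel set. The only real difference is the input inequality: the paper quotes Athreya's circle-average contraction $\int_0^{2\pi} V(g_t r_\theta\omega)\,d\theta \le ce^{-\gamma t}V(\omega)+b$ together with quasi-invariance of $V$ under bounded group elements, and then converts circle arcs into horocycle arcs via $r_\theta=\hat h_{\tan\theta}\,g_{\log\cos\theta}\,h_{-\tan\theta}$, whereas you invoke the horocycle pushforward estimate directly (Theorem~\ref{thm:EMM 2.13} with $\mathcal M=\emptyset$ and Lemma~\ref{lem:2.13 for hor}), which gives $\int_{-1}^1 f(g_th_s\omega)\,ds < a f(\omega)+b'$ and thus bypasses the circle-to-horocycle step; both routes are legitimate and the Chebyshev finish is identical. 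Two cautions on your write-up. First, the argument closes only because the leading coefficient can be made \emph{arbitrarily small}: for $\omega\in\{f\le\rho\}$ the bad set has measure at most $a+b'/\rho$, and one chooses $a<c/2$ and $\rho>2b'/c$. Your initially proposed log-integrability bound along horocycles, with a fixed constant $C_1$ in front of $\log\Delta(\omega)$, would only give a bound of order $C_1$ once hypothesis and conclusion refer to the same sublevel set, so by itself it does not suffice; it is the EMM/Athreya-type estimate (small coefficient at the cost of large $t_0$) that you must use, as you in fact do. Second, the detour through a smaller set $\mathcal K_0$ is unnecessary and slightly muddled (``at least as strong'' is backwards, since $\omega\in\mathcal K$ is the \emph{weaker} hypothesis); the clean statement is simply to apply Lemma~\ref{lem:2.13 for hor} with $\omega$ in the final sublevel set $\mathcal K=\{f\le\rho\}$, exactly as in the computation above.
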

\begin{proof}By \cite[Section 2.3]{ath thesis}, there exists $V$ a continuous, proper function on $\mathcal{H}$, and constants  $c,\gamma, a, b, C$ and $\tau$ so that, for all $t\geq \tau$,
$$\int_0^{2\pi} V(g_tr_\theta\omega)d\theta\leq ce^{-\gamma t}V(\omega)+b$$  and moreover, $a^{-1}V(\omega)<V(g\omega)<aV(\omega)$ for all $g\in SL(2,\mathbb{R})$ with $|g|<C$. 
By choosing $N$ large enough and using that $r_\theta=\hat h_{\tan(\theta)}g_{\log(\cos(\theta))}h_{-\tan(\theta)}$ we have the lemma for $\mathcal{K}=V^{-1}[0,N]$. 
\end{proof}
 
Let $\nu$ be an $SL(2,\mathbb{R})$-invariant measure on a stratum of translation surfaces. We say $\mathcal{F}$ is a $\nu$-\emph{almost everywhere invariant subbundle} if for $\nu$-almost every $\omega$ and any $g \in SL(2,\mathbb{R})$ we have that $g$  sends $\mathcal{F}_\omega$ to $\mathcal{F}_{g\omega}$, that is,
$KZ(g,\omega)\mathcal{F}_\omega = \mathcal{F}_{g\omega}$.

Following \cite[Def. 1.3]{CE} we say that the KZ cocycle has a $\nu$-\emph{measurable almost invariant splitting}
$\mathcal{F}$  if there exists a finite set of proper subbundles $\mathcal{F}_1,\dots,\mathcal{F}_n \subset \mathcal F$ such that   $\mathcal{F}_i \cap \mathcal{F}_j  =\{0\}$ $\nu$-almost everywhere, for all $1\leq i, j \leq n$, and for $\nu$-almost all $\omega$ and almost all $g$, the linear map  $KZ(g,\omega)$ sends  the set 
$\{\mathcal{F}_{1, \omega},\dots, \mathcal{F}_{n, \omega}\}$ to the set $\{\mathcal{F}_{1, g\omega},\dots,\mathcal{F}_{n, g\omega}\}$. 
Following \cite[Def. 1.4]{CE},  we say that the cocycle acts \emph{strongly irreducibly} on $\mathcal{F}$ with respect to the measure $\nu$ if it does not admit any measurable almost invariant splitting.

The span of $ \Re(\omega), \Im(\omega)$  (respectively the real and imaginary part of the Abelian differential  $\omega \in \mathcal H(\alpha)$)  defines a smooth  invariant, symplectic subbundle of the Hodge bundle, which is then $ \nu$-invariant for any $SL(2,\mathbb{R})$ invariant measure $\nu$ on $\mathcal H(\alpha)$.  We call this the \emph{$SL(2,\mathbb{R})$ subbundle.} Let $\hat{\mathcal{F}}$ denote its symplectic complement,  which is also a $\nu$-almost everywhere invariant subbundle.

\subsection{Outline of proof}
\label{subsec:outline}
To prove Theorem \ref{thm:main}, 
most of our work is to rule out the `weak stable space' (in the terminology of \cite{AF}). 
This is different from the approach of  \cite{AD} and our understanding of the approach of \cite{AAD}, where they rule it out for `structural' reasons. It also differs from the approach in \cite{AF}, because it is centered more on moduli space. As in the previous approaches we apply the Veech criterion \cite{veech metric 1} (Lemma \ref{lem:v crit} of this paper) which morally says that it suffices to show that for all 
$\alpha$ there exists $c>0$ and a compact set $\mathcal{K}$ so that for arbitrarily large $t$ such that $g_tr_\theta \omega \in \mathcal{K}$ (we actually need two additional appropriate times in~$\mathcal{K}$)  we have  the following lower bound for the distance from the integer lattice $H^1(M_{g_t\omega}, \mathbb Z))$ 
of the image of the vector $ \alpha \Im(r_\theta\omega)) \in H^1(M, \mathbb R)$ under the Kontsevich--Zorich cocycle:
\begin{equation}\label{eq:star}
\|KZ(g_t,r_\theta \omega) ( \alpha\Im(r_\theta\omega))\|_{\mathbb{Z}}>c\,.
\end{equation}
We show that for every $\omega$ this one parameter family of classes are transverse to any integer translate of the stable $SL(2,\mathbb{R})$-bundle. This allows us to apply Lemma \ref{lem:stay_proj_lip} which makes the assignment $$\theta \to \frac{KZ(g_t,r_\theta \omega)( \alpha \Im(r_\theta\omega))}{|KZ(g_t,r_\theta \omega)( \alpha \Im(r_\theta\omega))|}\,,$$ after rescaling the segment by the geodesic flow which exponentially expands it, closer to a constant curve (for typical $\theta$).  This lets~us apply Proposition \ref{prop:growth mechanism} (a modification of \cite{CE}) to have that   $\|KZ(g_t,r_\theta \omega)\Im(r_\theta\omega)\|$ typically grows in $t$.  The key Proposition \ref{prop:better} shows that, under appropriate assumptions, there exists a $0<\rho<1$ so that for an appropriately chosen segment of angles $J$ we have that for most $\theta \in J$ there is an $s>0$ so that 
$$\|KZ(g_{t+s},r_\theta \omega) ( \alpha \Im(r_\theta\omega))\|_{\mathbb{Z}}>(\|KZ(g_{t},r_\theta \omega) 
( \alpha \Im(r_\theta\omega))\|_{\mathbb{Z}})^\rho$$ and 
$g_{t+s}r_\theta\omega \in \mathcal{K}$.  A key step in the proof (Lemma \ref{lem:key step}) is a new large deviations estimate for the measure  of the set of directions, on any translation surface, where the Kontsevich-Zorich cocycle grows slower than expected. This complements \cite{AAEKMU}, Theorem 1.5, which proves a similar result for when the growth of the cocycle is larger than expected. Iterating Proposition~\ref{prop:better}, we can avoid the issues of `descendants' (contrary to~\cite{AF}, \S 3 especially (3.6), although this terminology is not used there).  Another difference with \cite{AF}'s approach is we treat the ``weak stable''  and stable subbundles at the same time. We also need to treat the unstable-$SL(2,\mathbb{R})$ subbundle and subbundles where the Kontsevich-Zorich cocycle acts isometrically, which we do by straightforward or standard arguments. 

Throughout this proof we treat horocycles instead of circles because they behave better under the geodesic flow, and relate horocycles to circles in Section~\ref{sec:stab and iso}. Section~\ref{sec:billiard} proves 
Theorem~\ref{thm:billiard} via the previously mentioned, and now standard, Baire Category argument. In this section we follow the approach of \cite{vorobets}.

\section{Making vectors grow}\label{sec:grow}
This section proves Propositions~\ref{prop:growth mechanism} and~\ref{prop:easy restriction} 
 and  then develops, in a straightforward way, some machinery from \cite{EskMirMo}. 

Let $\FF$ denote the complementary subbundle,  that is, the symplectic orthogonal, of the $SL(2,\mathbb{R})$  subbundle and let $\nu$ be any $SL(2, \mathbb R)$-invariant ergodic probability measure.

\begin{prop}\label{prop:growth mechanism}(Chaika-Eskin, \cite{CE})  If $\mathcal{F}\subset \FF$ is an equivariant subbundle, where the Kontsevich-Zorich cocycle acts strongly irreducibly and with a positive exponent, then there exists 
$\lambda:= \lambda_\mathcal F>0$ such that,
for all $\delta,\epsilon >0$,   and for all $L$ sufficiently large,  there exist  $\hat\epsilon>0$ and an open set $\hat{U}:= \hat U(\delta, L)$ so that 
$\nu(\hat{U})>1-\delta$, and for all $\omega\in \hat{U}$ and for all parallel horocycle sections $v$ at 
$\omega$, that is, for the all the maps $v : [-1,1] \to \mathcal{F}$ such that $v(s) \in 
\mathcal{F}_{h_s\omega}$ and $\pi_\omega (s)v(s) =  v(0)$,   we have 
$$
\begin{aligned}
\text{\rm Leb}(\{-1 \leq s\leq 1:&|KZ(g_L,h_s\omega)u)| \geq e^{ {\lambda} L (1-\epsilon)}  \vert u\vert\,, \text{ for all } u \in \mathcal F_{h_s\omega} \\ & \text{ with } \angle(u,v(s))<\hat\epsilon\})>2 -\delta. 
\end{aligned} $$  
\end{prop}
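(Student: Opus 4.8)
\textbf{Proof proposal for Proposition~\ref{prop:growth mechanism}.}

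The plan is to deduce this statement from the main growth mechanism of Chaika--Eskin \cite{CE}, whose conclusion is (roughly) that along the geodesic flow, for a strongly irreducible subbundle with a positive top exponent, the Kontsevich--Zorich cocycle expands ``most'' horocycle directions at the expected exponential rate. The difference with the present statement is the uniformity over an open cone of directions near a fixed parallel section $v(s)$: we do not just want $|KZ(g_L,h_s\omega)v(s)|$ large, we want $|KZ(g_L,h_s\omega)u|$ large for every $u$ within angle $\hat\epsilon$ of $v(s)$ in the fiber. So the structure of the argument is: (1) invoke \cite{CE} to get the large-measure expansion along the given parallel section, with rate $\lambda(1-\epsilon/2)$, say, on a good open set $\hat U$ of measure $>1-\delta$; (2) upgrade from a single direction to a cone by a perturbation/continuity argument using the operator-norm bound of Lemma~\ref{lem:KZbound}.

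For step (2), the key point is the following elementary linear-algebra observation. Fix $s$ and write $A = KZ(g_L,h_s\omega)$, which by Lemma~\ref{lem:KZbound} satisfies $\|A\|\leq e^{L}$ and $|Av|\geq e^{-L}|v|$ for all $v$. Suppose $|Av(s)|\geq e^{\lambda L(1-\epsilon/2)}|v(s)|$. For a unit vector $u$ in the fiber $\mathcal F_{h_s\omega}$ with $\angle(u,v(s))<\hat\epsilon$, decompose $u = (\cos\angle) \hat v(s) + (\sin\angle) w$ with $\hat v(s) = v(s)/|v(s)|$ and $w\perp \hat v(s)$ a unit vector; then $|Au|\geq \cos(\hat\epsilon) |A\hat v(s)| - \sin(\hat\epsilon)\|A\| \geq \cos(\hat\epsilon) e^{\lambda L(1-\epsilon/2)} - \sin(\hat\epsilon) e^{L}$. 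Choosing $\hat\epsilon = \hat\epsilon(L,\epsilon,\lambda)$ small enough that $\sin(\hat\epsilon) e^{L} \leq \tfrac12 e^{\lambda L(1-\epsilon)}$ and $\cos(\hat\epsilon)e^{\lambda L(1-\epsilon/2)} \geq e^{\lambda L(1-\epsilon)} + \tfrac12 e^{\lambda L(1-\epsilon)}$ — both achievable since $\lambda L(1-\epsilon/2) > \lambda L(1-\epsilon)$ and we may shrink $\hat\epsilon$ after $L$ is fixed — we get $|Au|\geq e^{\lambda L(1-\epsilon)}$ as required. Intersecting the good set of $s$ from step~(1) (of Leb-measure $>2-\delta$ in $[-1,1]$) with the full-measure set where this perturbation bound applies gives the claimed measure estimate. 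The hyperbolic-distance bound needed, $\text{dist}(g_L,Id) = L$, is standard.

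One subtlety to handle carefully: the statement of \cite{CE}'s mechanism must be matched to the precise normalization used here (parallel horocycle section on $[-1,1]$, Hodge norms, exponent $\lambda_{\mathcal F}$ equal to the top Lyapunov exponent of the cocycle restricted to $\mathcal F$, which is positive by hypothesis). The value of $\lambda$ is dictated by \cite{CE} and by the Oseledets theorem for the KZ cocycle over $(\mathcal H(\alpha),\nu)$ restricted to $\mathcal F$; strong irreducibility is exactly what \cite{CE} needs to guarantee that generic directions, not merely a.e. direction in an Oseledets-filtration sense, achieve the top rate. The set $\hat U$ and its measure bound $\nu(\hat U) > 1-\delta$ come directly from \cite{CE} (a Lusin/Egorov-type extraction: the convergence to the exponential rate is a.e., so on a large-measure compact-ish set it is uniform for $L$ large).

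I expect the main obstacle to be purely bookkeeping: extracting from \cite{CE} a statement in exactly the form ``there is an open set $\hat U$ of $\nu$-measure $>1-\delta$ on which, for all $L$ large, most horocycle directions expand at rate $\lambda(1-\epsilon)$'', with the quantifiers in the right order (the open set $\hat U = \hat U(\delta,L)$ is allowed to depend on $L$ here, which makes the extraction easier). The cone upgrade in step~(2) is routine once the single-direction statement is in hand, and the norm control is furnished entirely by Lemma~\ref{lem:KZbound}; no new dynamical input is needed there.
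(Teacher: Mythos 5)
Your step (2), the cone upgrade, is essentially identical to the paper's final computation: the paper also writes $u$ as a component along $v(s)$ plus a Hodge-orthogonal component, bounds the error term by $\hat\epsilon\, e^{L}$ via Lemma~\ref{lem:KZbound}, and chooses $\hat\epsilon$ after $L$ (there, $\hat\epsilon < e^{-L}e^{(\lambda-\tilde\epsilon)L}/8$ with $\tilde\epsilon<\epsilon/2$ and $e^{\epsilon L/2}>8$), so that part of your proposal is fine and is indeed the routine part.

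The gap is in step (1), in how you propose to obtain the single-direction statement. What is needed is a \emph{fixed-time, every-vector, most-horocycle-parameter} estimate: for a fixed large $L$ there is an open set $\hat U$ with $\nu(\hat U)>1-\delta$ such that for \emph{every} $\omega\in\hat U$ and \emph{every} vector $v(0)\in\mathcal F_\omega$ (arbitrary, in no way Oseledets-generic), the set of $s\in[-1,1]$ with $|KZ(g_L,h_s\omega)v(s)|\ge e^{(\lambda-\tilde\epsilon)L}|v(s)|$ has Lebesgue measure $>2-\delta$. This does not follow from ``a.e.\ convergence to the exponential rate plus a Lusin/Egorov extraction'': Oseledets-type convergence is a statement about $\nu$-a.e.\ $\omega$ and about vectors outside the slow subspaces, whereas here the vector is prescribed in advance (and its parallel transports may lie close to the slow Oseledets subspaces of $h_s\omega$ for many $s$), and the measure bound is taken in the horocycle parameter $s$ at a fixed $\omega$, not in $\omega$. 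The paper instead derives the statement from the random-walk estimate of Chaika--Eskin (Lemma~\ref{lem:Egood}, i.e.\ \cite{CE} Lemma 2.11: for $\omega$ in a set $E_{good}(L,\epsilon)$ of measure close to $1$ and for every $v\in\mathcal F_\omega$, all but a $\mu^L$-measure-$\epsilon$ set of words $(h_1,\dots,h_L)$ expand $v$ at rate $e^{(\lambda\pm\epsilon)L}$ --- this is where strong irreducibility is used), and then transfers from random words to the deterministic maps $g_{L}h_s$ by sublinear tracking (Lemmas~\ref{lemma:sublinear:tracking} and~\ref{lem:sublin hor}), using the $SO(2)$-invariance of $\mu$ and the decomposition \eqref{eq:NAN} to know that the hitting distribution of the parameter $\bar s$ is in the Lebesgue class; this produces the sets $\hat E_{good}(L,\tilde\epsilon)$ of Lemma~\ref{lem:good growth}, whose $\nu$-measures tend to $1$ and which are open by continuity of the Hodge norm, and $\hat U$ is taken to be such a set. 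So what you defer as ``purely bookkeeping'' is in fact the substantive dynamical content of the proof, and the specific mechanism you propose for it (Egorov on an a.e.\ statement) would not deliver the required quantifiers.
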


Proposition~\ref{prop:growth mechanism} is proved below.

For any compact subset $\mathcal K$, for any $\omega$ and $t>0$,  let 
$$
v_{\mathcal K} (\omega, t) = \vert \{ \tau \in [0, t] \vert  g_\tau (\omega) \in \mathcal K\} \vert\,.
$$

\begin{prop}\label{prop:easy restriction}
Let $\mathcal{F}\subset \FF$ be an equivariant subbundle of  $\FF$. For any compact subset $\mathcal K$
there exists $\lambda:=\lambda_{\mathcal{F}, \mathcal K} \in [0,1)$  such that 
\begin{itemize}
\item $\sup_{v\in {\mathcal{F}_\omega}}\frac{|KZ(g_{t},\omega)v|}{ |v| } \leq  \exp \left( t - (1-\lambda )v_{\mathcal K} (\omega, t) \right)$;
\item $\inf_{v\in {\mathcal{F}_\omega}}  \frac{|KZ(g_{t},\omega)v|}{ |v| } \geq  \exp \left( -t + (1-\lambda )v_{\mathcal K} (\omega, t) \right)$.
\end{itemize}
\end{prop}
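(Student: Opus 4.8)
The plan is to reduce both inequalities to Lemma~\ref{lem:KZbound} applied on short time intervals, then integrate the resulting logarithmic cocycle estimate. First I would observe that $v_{\mathcal K}(\omega,t)$ is just the Lebesgue measure of the time spent in $\mathcal K$ by the geodesic orbit, so it suffices to find a constant $\lambda \in [0,1)$, depending only on $\mathcal F$ and $\mathcal K$, such that whenever $g_\tau\omega \in \mathcal K$ for all $\tau$ in a unit interval $[\tau_0,\tau_0+1]$, the operator norm $\|KZ(g_1, g_{\tau_0}\omega)\|_{\mathcal F}$ is at most $e^{\lambda}$ (and similarly the co-norm is at least $e^{-\lambda}$). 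This is the compact-set version of the ``no expansion faster than the ambient rate'' principle: on a compact set the only way the Hodge norm can be stretched by the full factor $e$ is in the $SL(2,\mathbb R)$-directions, which have been quotiented out in $\FF \supseteq \mathcal F$. Concretely, $KZ(g_1,\cdot)$ restricted to $\FF$ is a continuous bundle map over the compact set $\mathcal K$, so its operator norm (with respect to the Hodge norms on source and target) attains a maximum $e^{\lambda}$ there, and because the second exterior power / the symplectic pairing forces the $\FF$-part to lie strictly inside the cone of possible Lyapunov behaviour, $\lambda<1$. I would want to state this as: set $e^{\lambda} := \sup\{ \|KZ(g_1,\omega)\|_{\mathcal F} : \omega \in \mathcal K\}$ and $e^{-\lambda'} := \inf\{ |KZ(g_1,\omega)v|/|v| : \omega\in\mathcal K,\ v\in\mathcal F_\omega\setminus\{0\}\}$, note both are finite and nonzero by continuity and properness of the Hodge norm, replace $\lambda$ by $\max(\lambda,\lambda')$, and check $\lambda<1$.

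Second, with this constant in hand, I would partition $[0,t]$ into unit subintervals $[j,j+1]$, $j=0,\dots,\lfloor t\rfloor-1$ (and a leftover piece of length $<1$), and telescope:
$$
KZ(g_t,\omega) = KZ(g_{t-\lfloor t\rfloor}, g_{\lfloor t\rfloor}\omega)\circ \prod_{j=\lfloor t\rfloor-1}^{0} KZ(g_1, g_j\omega)\,.
$$
On each subinterval $[j,j+1]$ there are two cases. If the whole subinterval maps into $\mathcal K$ under $g_\tau$, use $\|KZ(g_1,g_j\omega)\|_{\mathcal F}\leq e^{\lambda}$. Otherwise, use the crude bound $\|KZ(g_1,g_j\omega)\|_{\mathcal F}\leq e$ from Lemma~\ref{lem:KZbound} (with $\mathrm{dist}(g_1,\mathrm{Id})=1$). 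The leftover piece contributes at most $e^{t-\lfloor t\rfloor}\leq e$. Thus $\|KZ(g_t,\omega)\|_{\mathcal F}$ is at most $e$ to the power (number of ``bad'' unit subintervals) times $e^{\lambda}$ to the power (number of ``good'' unit subintervals), plus the leftover factor. The subtlety is that ``good'' was defined as the \emph{entire} unit subinterval lying in $\mathcal K$, whereas $v_{\mathcal K}(\omega,t)$ counts a measure, so a subinterval that is in $\mathcal K$ for a fraction $1-\delta$ of its length is ``bad'' in my bookkeeping but contributes nearly a full unit to $v_{\mathcal K}$. To reconcile these I would instead bound directly: the total exponent is $\leq t - (1-\lambda)\cdot(\text{measure of }\tau\in[0,t]\text{ with }g_\tau\omega\in\mathcal K')$ for a slightly shrunken $\mathcal K'\subset\mathcal K$, using that $\|KZ(g_\sigma,\omega)\|_{\mathcal F}\leq e^{\lambda_0 \sigma}$ for any $0<\sigma\le 1$ and $\omega$ with $g_{[0,\sigma]}\omega\subset\mathcal K$ where $\lambda_0<1$ is the ``infinitesimal'' constant (again continuity on $\mathcal K$); equivalently one can simply absorb the mismatch into the choice of $\mathcal K$, or enlarge $\mathcal K$ in the statement by a bounded neighborhood and note $v_{\mathcal K}$ and $v_{\mathcal K'}$ differ by $O(1)$ on bounded intervals --- but since the claimed inequality must hold for all $t$, the cleanest route is the infinitesimal/continuous-time cocycle estimate rather than the discrete partition.

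Third, the infimum (co-norm) inequality follows by the identical argument applied to $KZ(g_{-t},g_t\omega) = KZ(g_t,\omega)^{-1}$, using the lower bound half of Lemma~\ref{lem:KZbound} and the compact-set co-norm constant; note $g_\tau\omega\in\mathcal K$ for $\tau\in[0,t]$ iff $g_{-\sigma}(g_t\omega)\in\mathcal K$ for $\sigma\in[0,t]$, so $v_{\mathcal K}(g_t\omega,-t)=v_{\mathcal K}(\omega,t)$ with the obvious convention, and the two bullet points are genuinely equivalent under inversion. \textbf{Main obstacle.} The one genuinely non-formal point is showing $\lambda<1$ strictly, i.e.\ that on a compact set the cocycle restricted to $\FF$ cannot momentarily expand at the full ambient rate $e^{\,\mathrm{dist}}$. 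I would argue this via the $SL(2,\mathbb R)$-equivariance and the fact that the symplectic (Hodge) pairing between the $SL(2,\mathbb R)$-subbundle and $\FF$ is zero: the factor $e$ in Lemma~\ref{lem:KZbound} is sharp only along the tautological $(\Re\omega,\Im\omega)$ directions (where $g_t$ literally multiplies by $e^{\pm t}$), and transversality of $\FF$ to those directions, together with the continuity of the angle and of the Hodge metric over the compact set $\mathcal K$, yields a uniform gap. If one wanted to avoid even this, one could alternatively \emph{define} $\lambda_{\mathcal F,\mathcal K}:=\sup\{\log\|KZ(g_1,\omega)\|_{\mathcal F} : \omega\in\mathcal K\}\wedge(1-\epsilon)$ for the relevant small $\epsilon$, but making the clean statement with $\lambda<1$ requires the transversality observation above; everything else is the routine telescoping of a subadditive cocycle.
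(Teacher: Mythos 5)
Your overall plan (a pointwise rate strictly below the ambient rate on $\mathcal K$, then integrate along the orbit, using the trivial bound off $\mathcal K$) is the right shape, but the step you yourself flag as the ``main obstacle'' --- the strict inequality $\lambda<1$ --- is exactly where the proof lives, and the justification you offer does not work. Transversality (even Hodge-orthogonality) of $\FF$ to the tautological plane, plus continuity over $\mathcal K$, cannot by itself rule out that the cocycle momentarily expands a vector of $\FF_\omega$ at the full rate $e$: an abstract symplectic cocycle satisfying the conclusion of Lemma~\ref{lem:KZbound}, acting like $\mathrm{diag}(e^{t},e^{t},e^{-t},e^{-t})$ with the ``tautological'' plane spanned by the first and third coordinates, attains the extremal rate on the orthogonal complement as well; likewise the appeal to ``the second exterior power / the symplectic pairing'' gives no gap, since symplecticity only pairs exponents. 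The genuine input is analytic, not soft: by the first variational formulas for the Hodge norm (\cite[\S 2]{Fo}, \cite[\S 3.5]{FM}) --- which is precisely what the paper's one-line proof cites --- the logarithmic derivative $\frac{d}{dt}\log\vert KZ(g_t,\omega)c\vert$ is bounded in absolute value by $1$ for all classes, and for classes in the symplectic orthogonal $\FF$ it is bounded by a continuous function on the stratum that is strictly less than $1$ (the equality case of the Cauchy--Schwarz inequality in the variational formula is attained only on the tautological plane). Taking $\lambda$ to be the maximum of that function over $\mathcal K$ and integrating $\frac{d}{dt}\log\vert KZ(g_t,\omega)v\vert \le \mathbf{1}_{\{g_\tau\omega\notin\mathcal K\}}+\lambda\,\mathbf{1}_{\{g_\tau\omega\in\mathcal K\}}$ over $[0,t]$ gives the first bullet, and the symmetric lower bound on the derivative gives the second. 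Without this pointwise analytic fact (or an equivalent citation), your argument has a hole at its central point; the fallback of defining $\lambda$ as a supremum capped at $1-\epsilon$ is vacuous, since capping a constant does not make the inequality true.

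Two secondary remarks. First, your discrete telescoping over unit intervals genuinely does not reconcile with the measure-theoretic quantity $v_{\mathcal K}(\omega,t)$: if the orbit spends measure close to $t$ in $\mathcal K$ but its visits avoid your sampling times (e.g.\ it lies in $\mathcal K$ on each $[j+0.01,\,j+1]$), your bookkeeping yields only the trivial bound $e^{t}$, and enlarging $\mathcal K$ introduces additive constants that the stated inequality (which has none, and must hold for all $t$, including small $t$) does not tolerate; you correctly sense that the continuous-time estimate is the clean route, but that estimate is again the variational formula above, so the discretization buys nothing. Second, your compactness step takes a supremum of $\Vert KZ(g_1,\cdot)\Vert_{\mathcal F}$ over $\mathcal K$ and invokes continuity; the subbundle $\mathcal F$ is only assumed equivariant, so it need not be continuous --- but this is harmless, since the estimate holds for all of $\FF$ (which is smooth) and hence for any subbundle, with $\lambda$ depending only on $\mathcal K$. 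This is another small advantage of arguing directly from the variational formula on $\FF$, as the paper does.
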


\begin{proof}  Both estimates follow from the first variational formulas for the Hodge norm~\cite[\S 2]{Fo}
or  \cite[\S 3.5]{FM}.
\end{proof}

We now prove Proposition~\ref{prop:growth mechanism}. 
\subsection{Proof of Proposition \ref{prop:growth mechanism}}
Let $\lambda>0$ denote the largest element of the Lyapunov spectrum restricted to $\mathcal{F}$. 

Let $\mu$ be an $SO(2)$ invariant, compactly supported probability measure on $SL(2,\mathbb{R})$, whose support generates a dense subgroup of $SL(2,\mathbb{R})$. 
Let $\mathcal M$ be any $SL(2, \mathbb R)$-invariant suborbifold. We recall that  Eskin and M. Mirzakhani \cite{EskMir} and A. Eskin, M. Mirzakhani and A. Mohammadi \cite{EskMirMo} proved that all 
orbit closures of the $SL(2,\mathbb{R})$ action on the moduli space are ``affine"  suborbifolds supporting a
unique ``affine'' probability invariant measure. 

For any $L\in \mathbb N \setminus\{0\}$ and $\epsilon>0$, let $E_{good}(L,\epsilon)$ be the set of all surfaces $\omega \in \mathcal M$ so that for every 
$ v \in \mathcal{F}_{\omega}$ there exists a subset $H(v) \subset SL(2, \mathbb{R})^L$
$$\mu^L
(H(v)) > 1 - \epsilon,$$
and for all $(h_1, \dots , h_L) \in H(v)$, we have 
$$e^{(\lambda-\epsilon )L} |v|
<
|KZ(h_L\cdots h_1,\omega)v|
< e^{(\lambda+\epsilon)L} |v|$$

\begin{lem}\label{lem:Egood}[Chaika-Eskin \cite{CE}, Lemma 2.11]   For all $\epsilon>0$ we have $\underset{L \to \infty}{\lim}\nu(E_{good}(L,\epsilon))=1$. 
\end{lem}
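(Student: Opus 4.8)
The plan is to read this as a statement about the $\mu$-random walk on $\mathcal M:=\supp(\nu)$ and the associated random matrix product, the cocycle $(\omega,(h_i)_{i\ge1})\mapsto KZ(h_L\cdots h_1,\omega)$ restricted to $\mathcal F$. Since $\mu$ is $SO(2)$-invariant, compactly supported, and its support generates a dense subgroup of $SL(2,\mathbb R)$, the $SL(2,\mathbb R)$-invariant probability $\nu$ is $\mu$-stationary and, by \cite{EskMir,EskMirMo}, is the unique $\mu$-stationary measure on $\mathcal M$; in particular the random walk is ergodic. Here $\lambda>0$ denotes the top Lyapunov exponent of this random product cocycle on $\mathcal F$, which the proof of Proposition~\ref{prop:growth mechanism} identifies with the top of the Lyapunov spectrum of the Kontsevich--Zorich cocycle on $\mathcal F$ via the standard correspondence between an $SO(2)$-invariant random walk and the Teichm\"uller geodesic flow.

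The first ingredient is the pointwise version. Because the cocycle acts strongly irreducibly on $\mathcal F$, in particular $\mathcal F$ admits no proper nonzero invariant subbundle, the Furstenberg--Kifer filtration of $\mathcal F$ for the random product is trivial; hence for $\nu$-a.e.\ $\omega$, \emph{every} nonzero $v\in\mathcal F_\omega$ satisfies $\tfrac1L\log(|KZ(h_L\cdots h_1,\omega)v|/|v|)\to\lambda$ for $\mu^{\mathbb N}$-a.e.\ $(h_i)$. Equivalently, $\log(|KZ(h_L\cdots h_1,\omega)v|/|v|)$ is an additive functional along the random walk on $\mathbb P(\mathcal F)$ of $\sigma((\omega,[v]),h):=\log(|KZ(h,\omega)v|/|v|)$, whose integral against the (unique, by strong irreducibility) $\mu$-stationary measure on $\mathbb P(\mathcal F)$ over $\nu$ equals $\lambda$, and the convergence is Breiman's strong law.

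The crux is to upgrade ``for each $v$'' to ``uniformly over $v$ on each fiber'', which is what the definition of $E_{good}$ requires: we must show that $\sup_{[v]\in\mathbb P(\mathcal F_\omega)}\mu^L\bigl(\{(h_i):v\text{ is not good}\}\bigr)\to0$ for $\nu$-a.e.\ $\omega$. I would write $\log(|KZ(h_L\cdots h_1,\omega)v|/|v|)=\log\|KZ(h_L\cdots h_1,\omega)\|_{\mathcal F}+\log r_L(v)$, with $r_L(v)\in(0,1]$ measuring how close $[v]$ is to the most contracted directions of $KZ(h_L\cdots h_1,\omega)|_{\mathcal F}$. The first term does not depend on $v$, so $\tfrac1L\log\|KZ(h_L\cdots h_1,\omega)\|_{\mathcal F}\in(\lambda-\tfrac\epsilon2,\lambda+\tfrac\epsilon2)$ with $\mu^L$-probability tending to $1$ for $\nu$-a.e.\ $\omega$ (subadditive ergodic theorem, Egorov and Fubini), uniformly in $v$; this already yields the upper bound for all $v$ at once. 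For the lower bound, the bad event for a fixed $v$ forces $r_L(v)<e^{-\epsilon L/2}$, i.e.\ $[v]$ to lie within $\approx e^{-\epsilon L/2}$ of the random most-contracted proper subspace $W_L=W_L((h_i))$ of $KZ(h_L\cdots h_1,\omega)|_{\mathcal F}$; under strong irreducibility the law of $W_L$ obeys a Guivarc'h-type H\"older-regularity bound $\mu^L(\{(h_i):\angle([v],W_L)<\delta\})\le C(\omega)\,\delta^{\beta}$ with $\beta>0$ absolute and $C(\omega)$ finite for $\nu$-a.e.\ $\omega$ (here one uses recurrence to a fixed compact part of moduli space, Lemma~\ref{lem:in compact}, together with Lemma~\ref{lem:KZbound}, to keep distortions and hence the constants under control). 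Therefore $\sup_{[v]}\mu^L(\{(h_i):v\text{ grows slowly}\})\le o(1)+C(\omega)e^{-\beta\epsilon L/2}\to0$ for $\nu$-a.e.\ $\omega$, and since this supremum is bounded by $1$, the bounded convergence theorem gives $\nu(E_{good}(L,\epsilon))\to1$.

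The step I expect to be the main obstacle is exactly this uniformity in the direction $v$: pointwise-in-$v$ convergence together with compactness of the fiber is not enough, since the slowly growing directions drift with the random sample, so one genuinely needs the regularity of the stationary (Furstenberg) measure on $\mathbb P(\mathcal F)$. This is delicate if the top Kontsevich--Zorich exponent on $\mathcal F$ fails to be simple: one should then pass to the Grassmannian of the top-exponent block, where the induced cocycle is proximal, establish regularity there, and transfer it back to $\mathbb P(\mathcal F)$. A second, more routine, point is arranging all the constants to depend measurably on, and be finite for almost every, base point $\omega\in\mathcal M$, which is where recurrence to a compact subset of moduli space and the deterministic bounds of Lemma~\ref{lem:KZbound} are used.
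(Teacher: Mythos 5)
The paper itself does not prove Lemma~\ref{lem:Egood}: it is quoted from Chaika--Eskin \cite[Lemma 2.11]{CE}, the only added content being the remark that the strong irreducibility hypothesis on $\mathcal{F}$ justifies the citation. Measured against that source, your first two ingredients (identifying $\lambda$ with the top exponent of the random product via $SO(2)$-invariance, the pointwise Furstenberg--Kifer/Breiman law for each fixed $v$, and the norm upper bound) are consistent with the standard route. The gap is exactly at the step you flag as the main obstacle, and I do not think it can be deferred: your mechanism for uniformity in $v$ is a Guivarc'h-type H\"older regularity bound $\mu^L(\{\angle([v],W_L)<\delta\})\le C(\omega)\,\delta^{\beta}$ for the law of the most contracted subspaces. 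Such bounds are known for i.i.d.\ products of matrices acting on a \emph{fixed} vector space under strong irreducibility \emph{and} proximality (plus moment conditions). Here the cocycle is fibered over the noncompact base $(\supp(\nu),\nu)$, the matrices depend on the base trajectory, proximality is not known (the top exponent of the Kontsevich--Zorich cocycle on $\mathcal{F}$ need not be simple), and your proposed remedy --- pass to a Grassmannian of the top block, prove regularity there with base-point dependent constants controlled by recurrence, and transfer back --- amounts to developing a fibered Guivarc'h--Raugi/Benoist--Quint regularity theory that is not available off the shelf and is far stronger than the lemma being proved. A smaller but related inaccuracy: the parenthetical ``unique, by strong irreducibility'' stationary measure on $\mathbb{P}(\mathcal{F})$ is unjustified; uniqueness needs proximality-type input, and what one actually needs is only that all stationary measures over $\nu$ share the same fiberwise average.

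That weaker statement is precisely how the cited proof avoids regularity altogether: in \cite{CE} (resting on the semisimplicity results in the appendix of \cite{EskMir}), strong irreducibility implies that every $\mu$-stationary measure on the projectivized bundle $\mathbb{P}(\mathcal{F})$ projecting to $\nu$ has the same fiber exponent $\lambda$, and a Furstenberg--Kifer-type martingale/compactness argument then upgrades this to convergence of $\frac1L\log\bigl(|KZ(h_L\cdots h_1,\omega)v|/|v|\bigr)$ to $\lambda$ in probability \emph{uniformly} over nonzero $v\in\mathcal{F}_\omega$, which is exactly the statement $\nu(E_{good}(L,\epsilon))\to1$. No information about the geometry or regularity of the stationary measures (or of the laws of the singular subspaces $W_L$) is required. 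If you replace your regularity step by this ``all stationary measures have the same exponent'' argument, the rest of your outline essentially recovers the quoted proof; as written, however, the central uniformity claim rests on an unproven and substantially harder input, so the proposal has a genuine gap.
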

Note that by assumption the cocycle acts strongly irreducibly on $\mathcal{F}$ justifying the application of \cite{CE}. 
The continuity of the Hodge norm implies that $E_{good}(L,\epsilon)$ is open.

To prove the proposition we need to relate random walks to Teichm\"uller geodesics via standard techniques:

\begin{lem}[Sublinear Tracking]
\label{lemma:sublinear:tracking}
There exists $\lambda >0$ (depending only on $\mu$), and for
$\mu^{\mathbb{N}}$-almost all $\bar{g} = (g_1, \dots, g_n, \dots) \in SL(2,\mathbb{R})^{\mathbb{N}}$ there
exists $\bar{\theta} = \bar{\theta}(\bar{g}) \in \mathbb{R}$ such that 
\begin{equation}
\label{eq:lemma:sublinear:tracking}
\lim_{n \to \infty} \frac{1}{n} \log \| (g_{\lambda n} r_{\bar{\theta}})
(g_n \dots g_1)^{-1} \| = 0.
\end{equation}
 Furthermore, the
distribution of $\bar{\theta}$ is uniform, i.e
\begin{equation}
\label{eq:uniform:hitting:measure}
\mu^{\mathbb{N}}\left( \{ \bar{g} \in SL(2,\mathbb{R})^{\mathbb{N}} :
  \bar{\theta}(\bar{g}) \in [\theta_1, \theta_2] \} \right) =
\frac{|\theta_2 - \theta_1|}{2\pi}.
\end{equation}
\end{lem}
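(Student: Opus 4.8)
The plan is to recognize this as a consequence of the Oseledets-type theory for random walks on $SL(2,\mathbb{R})$, combined with the standard dictionary between random matrix products and Teichm\"uller geodesics. First I would invoke the fact that, since $\mu$ is compactly supported, $SO(2)$-invariant, and its support generates a dense subgroup of $SL(2,\mathbb{R})$, the random walk driven by $\mu$ on $SL(2,\mathbb{R})/SO(2)$ (the hyperbolic plane) has a positive top Lyapunov exponent $\lambda > 0$ and converges almost surely to the boundary $\partial \mathbb{H} = S^1$; the hitting (harmonic) measure on the boundary is $SO(2)$-invariant by the $SO(2)$-invariance of $\mu$, hence is normalized Lebesgue measure $\tfrac{d\theta}{2\pi}$. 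This immediately gives the uniform-distribution statement~\eqref{eq:uniform:hitting:measure} once we identify $\bar\theta(\bar g)$ with the boundary point to which the trajectory $(g_n\cdots g_1)\cdot o$ converges (here $o = SO(2)$ is the basepoint).

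Next I would make the identification of $\bar\theta$ precise and prove~\eqref{eq:lemma:sublinear:tracking}. The Cartan (KAK) decomposition writes $g_n\cdots g_1 = k_n \, a_n \, k_n'$ with $a_n = g_{\ell_n}$ for some $\ell_n \geq 0$; by the Furstenberg–Kesten / Oseledets theorem for this random walk, $\ell_n / n \to \lambda$ almost surely, and by the boundary convergence the left rotational part $k_n$ converges, $k_n \to r_{\bar\theta}$, in $SO(2)$ at a rate that is $o(1)$ after taking logarithms (this is the standard statement that the direction of maximal expansion stabilizes; it follows from the contraction properties of the random walk on the projective line, or from the Oseledets filtration being almost surely defined). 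One also needs $\ell_n - \lambda n = o(n)$, which is again Oseledets applied to $\log\|g_n\cdots g_1\|$ together with $\log\|g_n\cdots g_1\| = \ell_n + O(1)$. Writing $(g_{\lambda n} r_{\bar\theta})(g_n\cdots g_1)^{-1} = g_{\lambda n} r_{\bar\theta} (k_n')^{-1} g_{-\ell_n} k_n^{-1}$ and using that $g_{\lambda n - \ell_n}$ has log-norm $|\lambda n - \ell_n| = o(n)$, while the conjugation of $g_{-\ell_n}$ by the nearly-identity rotation $r_{\bar\theta} (k_n')^{-1}$ — wait, one must be careful: $k_n'$ need not converge. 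The correct bookkeeping is to compare $g_n\cdots g_1$ with $g_{\ell_n} r_{\bar\theta}$ only up to the left, i.e. estimate $\|r_{-\bar\theta} g_{-\ell_n} (g_n\cdots g_1)\| = \|r_{-\bar\theta} k_n g_{\ell_n} k_n' g_{-\ell_n}\|$... this still does not obviously collapse. So instead I would cite the precise form proved in \cite{AF} (Lemma resembling this) or in work of Tiozzo / Benoist–Quint: the statement one actually uses is that $\frac1n \log\|(g_n\cdots g_1)^{-1} r_{\bar\theta} g_{\lambda n}\| \to 0$, which is exactly a restatement that the random geodesic ray sublinearly tracks the deterministic ray $t \mapsto r_{\bar\theta} g_t \cdot o$ of the appropriate escape speed $\lambda$; this is a theorem about negatively curved (here, constant curvature) spaces due to Karlsson–Margulis / Tiozzo, specialized to $\mathbb{H}$.

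I expect the main obstacle — or at least the only point requiring genuine care rather than citation — to be pinning down which endpoint data survives in the limit: the trajectory determines a single boundary point $\bar\theta$, but a Cartan decomposition has two rotational factors, and only the left one (equivalently, the attracting direction) is asymptotically well-defined. Once that is correctly set up, everything reduces to: (i) positivity of the Lyapunov exponent $\lambda$ from density of the generated subgroup (Furstenberg), (ii) the law of large numbers $\ell_n/n \to \lambda$, (iii) boundary convergence with the harmonic measure being $SO(2)$-invariant hence Lebesgue, and (iv) the sublinear tracking theorem in $\mathbb{H}$. I would therefore present the proof as a short assembly of these four standard inputs, citing \cite{AF} for the exact formulation used downstream, and remark that the value of $\lambda$ here is the random-walk Lyapunov exponent, not yet related to any Kontsevich–Zorich exponent — that linkage is made in the subsequent argument via Lemma~\ref{lem:Egood}.
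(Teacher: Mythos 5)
Your proposal is consistent with the paper, which in fact offers no proof of this lemma at all: it is quoted as a standard sublinear-tracking statement, with only the remark that \eqref{eq:uniform:hitting:measure} follows from the $SO(2)$-invariance of $\mu$, and your assembly — Furstenberg positivity of the drift, the law of large numbers for $\log\|g_n\cdots g_1\|$, boundary convergence with rotation-invariant (hence Lebesgue) harmonic measure, and the Kaimanovich/Karlsson--Margulis geodesic tracking theorem — is exactly the standard derivation. One clarification resolving your mid-proof hesitation: for products $g_n\cdots g_1$ it is the \emph{right} rotational factor $k_n'$ in the decomposition $g_n\cdots g_1=k_n g_{\ell_n}k_n'$ that stabilizes and determines $\bar\theta$ (apply the usual statement to the inverse walk $(g_n\cdots g_1)^{-1}=g_1^{-1}\cdots g_n^{-1}$, whose increments are i.i.d.\ with an $SO(2)$-invariant law), and since $\|g_{\lambda n}r_{\epsilon}g_{-\ell_n}\|\asymp\max(e^{|\lambda n-\ell_n|},e^{\lambda n+\ell_n}|\sin\epsilon|)$ this convergence must hold at exponential rate $2\lambda$ — which is precisely what the tracking theorem you cite packages, so deferring to it rather than to the matrix bookkeeping is the right move.
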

Note that \eqref{eq:uniform:hitting:measure} follows from the fact that $\mu$ is $SO(2)$ invariant. 

Because, when $\theta \notin \{\pm \frac \pi 2\}$,
\begin{equation}\label{eq:NAN}
r_\theta=\begin{pmatrix} 1&0\\ \tan(\theta)&1\end{pmatrix}\begin{pmatrix} \cos(\theta)&0\\0&\sec(\theta)\end{pmatrix}\begin{pmatrix} 1& -\tan(\theta)\\0&1\end{pmatrix}
\end{equation}
the previous lemma implies:

\begin{lem}[Sublinear Tracking]
\label{lem:sublin hor}
There exists $\lambda >0$ (depending only on $\mu$), and 
$\mu^{\mathbb{N}}$-almost all $\bar{g} = (g_1, \dots, g_n, \dots) \in SL(2,\mathbb{R})^{\mathbb{N}}$ there
exists $\bar{s} = \bar{s}(\bar{g}) \in(-\infty, + \infty)$ such that 
\begin{equation}
\label{eq:lemma:sublin hor}
\lim_{n \to \infty} \frac{1}{n} \log \| (g_{\lambda n} h_{\bar{s}})
(g_n \dots g_1)^{-1} \| = 0.
\end{equation}
 Furthermore, the
distribution of $\bar{s}$ is in the measure class of Lebesgue.
\end{lem}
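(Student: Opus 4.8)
The plan is to derive Lemma~\ref{lem:sublin hor} from Lemma~\ref{lemma:sublinear:tracking} by a change of variables on $SL(2,\mathbb{R})$ that replaces the rotation factor $r_{\bar\theta}$ with a horocycle factor $h_{\bar s}$. The key observation is the $NAN$-type decomposition \eqref{eq:NAN}: for $\theta\notin\{\pm\pi/2\}$ we can write $r_\theta = \hat h_{\tan\theta}\, a(\theta)\, h_{-\tan\theta}$, where $a(\theta)=\mathrm{diag}(\cos\theta,\sec\theta)$ lies in a bounded piece of the diagonal subgroup (bounded away from $\theta=\pm\pi/2$) and $\hat h_{\tan\theta}$ is a lower-triangular unipotent. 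Since the rate functional $\frac1n\log\|(\cdot)(g_n\cdots g_1)^{-1}\|$ in \eqref{eq:lemma:sublinear:tracking} is insensitive to multiplication (on the left) by a sequence of matrices of subexponential norm, and since $a(\theta)\hat h_{\tan\theta}^{-1}$ — or rather the conjugating factors — stay in a fixed compact set as long as $\bar\theta$ avoids a neighborhood of $\pm\pi/2$, one expects to be able to absorb these factors into the error term.

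More precisely, I would proceed as follows. First, apply Lemma~\ref{lemma:sublinear:tracking} to obtain, for $\mu^{\mathbb N}$-a.e.\ $\bar g$, the value $\bar\theta(\bar g)$ with $\frac1n\log\|g_{\lambda n} r_{\bar\theta}(g_n\cdots g_1)^{-1}\|\to 0$; by \eqref{eq:uniform:hitting:measure} the law of $\bar\theta$ is uniform, so $\bar\theta\in\{\pm\pi/2\}$ only on a null set, and we may discard it. Second, for the remaining $\bar g$, set $\bar s := \bar s(\bar g)$ to be $-\tan(\bar\theta(\bar g))$ (the horocycle parameter appearing on the right in \eqref{eq:NAN}), so that $r_{\bar\theta} = \hat h_{\tan\bar\theta}\, a(\bar\theta)\, h_{\bar s}$. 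Third, rewrite
\[
g_{\lambda n} h_{\bar s} (g_n\cdots g_1)^{-1} = \bigl( g_{\lambda n}\, a(\bar\theta)^{-1}\, g_{\lambda n}^{-1}\bigr)\bigl( g_{\lambda n}\, \hat h_{\tan\bar\theta}^{-1}\, g_{\lambda n}^{-1}\bigr)\, g_{\lambda n} r_{\bar\theta} (g_n\cdots g_1)^{-1}.
\]
The third factor has subexponential norm by Lemma~\ref{lemma:sublinear:tracking}. The first factor equals $\mathrm{diag}(\sec\bar\theta,\cos\bar\theta)$, a fixed bounded matrix. The second factor is $g_{\lambda n}\hat h_{-\tan\bar\theta} g_{\lambda n}^{-1} = \hat h_{-e^{-2\lambda n}\tan\bar\theta}$, whose norm is bounded by a constant (in fact tends to $\mathrm{Id}$) since conjugation by $g_{\lambda n}$ \emph{contracts} the lower-triangular unipotent direction. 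Hence the product has norm bounded by a constant times the norm of the third factor, giving $\frac1n\log\|g_{\lambda n}h_{\bar s}(g_n\cdots g_1)^{-1}\|\to 0$. Finally, the distribution of $\bar s = -\tan\bar\theta$ is the pushforward of the uniform law on $S^1$ under $-\tan$, which is absolutely continuous with respect to Lebesgue on $\mathbb R$ (and its density, $\frac{1}{\pi(1+s^2)}$, is positive everywhere), so $\bar s$ is in the Lebesgue measure class.

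I do not anticipate a serious obstacle here; the statement is essentially a bookkeeping translation between the rotation and horocycle pictures, and the only point requiring a moment's care is which of the two unipotent factors in \eqref{eq:NAN} gets conjugated by $g_{\lambda n}$ — one must make sure it is the \emph{contracted} (lower-triangular) one, so that its conjugate stays bounded rather than blowing up exponentially, which it is, since the right-hand factor in \eqref{eq:NAN} is the upper-triangular $h_{-\tan\theta}$ and it is precisely that parameter that becomes $\bar s$. The diagonal factor $a(\bar\theta)$ is harmless precisely because we have thrown away the null set where $\bar\theta$ approaches $\pm\pi/2$, keeping $\sec\bar\theta$ bounded on the complement — though strictly speaking, to make $\|a(\bar\theta)^{\pm1}\|$ uniformly bounded one restricts to the full-measure set where $|\bar\theta|\le \pi/2-\epsilon$ for some $\epsilon>0$ depending on $\bar g$, which suffices for the a.e.\ statement.
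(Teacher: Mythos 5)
Your proposal is correct and follows exactly the route the paper intends: the paper derives Lemma~\ref{lem:sublin hor} from Lemma~\ref{lemma:sublinear:tracking} solely via the decomposition \eqref{eq:NAN}, and your argument (setting $\bar s=-\tan\bar\theta$, conjugating the lower-triangular factor $\hat h$ by $g_{\lambda n}$ so it contracts, absorbing the bounded diagonal factor, and pushing forward the uniform law of $\bar\theta$ under $-\tan$ to a Cauchy distribution equivalent to Lebesgue) is precisely the computation the paper leaves implicit. No gaps.
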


Now let $\hat{E}_{good}(L,\tilde{\epsilon})$ 
 be the set of all surfaces $\omega\in \mathcal M$ so that, for every 
$ v \in \mathcal F_{\omega}$; we have  
$$\text{\rm Leb}(\{s\in [-1,1]:
e^{(\lambda-\tilde{\epsilon} )L}
<
\frac{ |KZ(g_L,h_s\omega)v_{\omega'}(s)|}{ | v_{\omega'}(s) |}
< e^{(\lambda+\tilde{\epsilon})L}\})>  2-\delta\,.$$

Lemmas \ref{lem:sublin hor} and  \ref{lem:Egood} imply:
\begin{lem}\label{lem:good growth} $\underset{L \to \infty}{\lim} \nu(\hat{E}_{good}(L,\tilde\epsilon))=1.$
\end{lem}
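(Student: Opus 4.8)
The statement to prove is Lemma \ref{lem:good growth}: that $\lim_{L\to\infty}\nu(\hat E_{good}(L,\tilde\epsilon))=1$, where $\hat E_{good}$ is the ``horocycle version'' of $E_{good}$, asking that for every $v$ in the fiber, a $(2-\delta)$-proportion of $s\in[-1,1]$ produces geodesic growth of $v_\omega(s)$ between $e^{(\lambda-\tilde\epsilon)L}$ and $e^{(\lambda+\tilde\epsilon)L}$.

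\medskip

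\textbf{Proof proposal.} The plan is to transfer the random-walk growth estimate of Lemma \ref{lem:Egood} to a statement about the Teichm\"uller geodesic flow along horocycle arcs, using the sublinear tracking Lemma \ref{lem:sublin hor}. First I would fix $\tilde\epsilon>0$ and choose an auxiliary $\epsilon\ll\tilde\epsilon$ (to be pinned down at the end), and take $L$ large. By Lemma \ref{lem:Egood}, $\nu(E_{good}(N,\epsilon))\to 1$ as $N\to\infty$; by Birkhoff's ergodic theorem (or the mean ergodic theorem) applied to the random walk driven by $\mu$, for $\nu$-most $\omega$ and $\mu^{\mathbb N}$-most sample paths $\bar g$, the frequency with which the walk $g_n\cdots g_1\omega$ visits $E_{good}(N,\epsilon)$ tends to $\nu(E_{good}(N,\epsilon))$, which is close to $1$. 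Thus for most $\omega$, along most sample paths, at most scales $n$ one has $e^{(\lambda-\epsilon)N}|v|<|KZ(g_n\cdots g_1,\omega)v|<e^{(\lambda+\epsilon)N}|v|$ for the relevant $v$; chaining these over a window of length comparable to $n$ and using the uniform operator-norm bound of Lemma \ref{lem:KZbound} (to control the bounded-length ``defect'' blocks) gives $\frac1n\log|KZ(g_n\cdots g_1,\omega)v|\to\lambda$ for $\nu\times\mu^{\mathbb N}$-a.e.\ $(\omega,\bar g)$ and for all $v\in\mathcal F_\omega$ — the uniformity in $v$ coming from the fact that $E_{good}$ is phrased with a ``for every $v$'' quantifier and that $\mathcal F_\omega$ is finite-dimensional. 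Here strong irreducibility is what licenses the use of \cite{CE}, as noted after Lemma \ref{lem:Egood}.

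\medskip

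Next I would invoke Lemma \ref{lem:sublin hor}: for $\mu^{\mathbb N}$-a.e.\ $\bar g$ there is $\bar s=\bar s(\bar g)$ with $\frac1n\log\|(g_{\lambda n}h_{\bar s})(g_n\cdots g_1)^{-1}\|\to 0$, and the law of $\bar s$ is in the Lebesgue class. Combining the sublinear tracking with the previous paragraph and the two-sided bound of Lemma \ref{lem:KZbound} applied to the ``small'' group element $(g_{\lambda n}h_{\bar s})(g_n\cdots g_1)^{-1}$, one gets that for $\nu$-a.e.\ $\omega$ and $\mu^{\mathbb N}$-a.e.\ $\bar g$,
\[
\lim_{n\to\infty}\frac1{\lambda n}\log\frac{|KZ(g_{\lambda n},h_{\bar s}\omega)\,v_\omega(\bar s)|}{|v_\omega(\bar s)|}=\lambda\,,\qquad\text{for all }v\in\mathcal F_{h_{\bar s}\omega}\,,
\]
where $v_\omega(\bar s)=\pi_\omega(\bar s)v(\bar s)$ is the parallel-transported curve value; the passage from the random product to $KZ(g_{\lambda n},h_{\bar s}\omega)$ uses the cocycle relation and again Lemma \ref{lem:KZbound} to absorb the tracking error. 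Since the distribution of $\bar s$ is in the Lebesgue measure class and (after a harmless normalization/truncation of $\mu$ to arrange $\bar s\in[-1,1]$ with controlled density, or by covering $[-1,1]$ by finitely many rescaled copies of the tracking statement) Lebesgue-a.e.\ $s\in[-1,1]$ is such a good parameter for $\nu$-a.e.\ $\omega$, Fubini (over $\nu\times\text{Leb}_{[-1,1]}$) shows that for every $\eta>0$, for $\nu$-most $\omega$ the set of $s\in[-1,1]$ with $e^{(\lambda-\tilde\epsilon)L}<|KZ(g_L,h_s\omega)v_\omega(s)|/|v_\omega(s)|<e^{(\lambda+\tilde\epsilon)L}$ for all $v$ has measure $>2-\delta$, once $L$ is large. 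This is precisely the defining condition of $\hat E_{good}(L,\tilde\epsilon)$, and letting $L\to\infty$ drives $\nu(\hat E_{good}(L,\tilde\epsilon))\to 1$.

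\medskip

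\textbf{Main obstacle.} The delicate point is the uniformity over all $v\in\mathcal F_\omega$ together with the interchange of the ``for all $v$'' quantifier with the almost-everywhere statements: the random-walk estimate of \cite{CE} is already stated uniformly in $v$, but converting it into a uniform-in-$v$ statement about the \emph{deterministic} geodesic $g_L$ along a \emph{fixed} horocycle parameter $s$ requires carefully quantifying the sublinear tracking error (which depends on $\bar g$ but not on $v$) and checking that the finitely many ``bad'' blocks where the walk leaves $E_{good}$ contribute only $o(n)$ to $\log|KZ|$ via Lemma \ref{lem:KZbound}; this is where one must be careful that the frequency-of-visits estimate is uniform enough. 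The second mild subtlety is the normalization issue that $\bar s$ produced by Lemma \ref{lem:sublin hor} need not land in $[-1,1]$: one handles this by rescaling via the geodesic flow (conjugating $h_s$ by $g_t$ rescales $s$) and using $SL(2,\mathbb R)$-invariance of $\nu$, so that the good-parameter set can be arranged to have positive Lebesgue density in $[-1,1]$ for $\nu$-a.e.\ $\omega$, and then the Fubini argument closes. Everything else is a routine combination of Birkhoff's theorem, Lemma \ref{lem:KZbound}, and Lemmas \ref{lem:Egood} and \ref{lem:sublin hor}.
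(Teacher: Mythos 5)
Your overall skeleton (Lemma \ref{lem:Egood} plus the sublinear tracking Lemma \ref{lem:sublin hor}, with the Lebesgue-class law of $\bar s$ converting ``most sample paths'' into ``most $s\in[-1,1]$'') is exactly the combination the paper invokes, but the way you execute it contains a genuinely false step. In your first two paragraphs you claim that for $\nu$-a.e.\ $\omega$ and $\mu^{\mathbb N}$-a.e.\ path (and then, after tracking, for a.e.\ $s$) one has $\frac1n\log|KZ(g_n\cdots g_1,\omega)v|\to\lambda$ \emph{for all} $v\in\mathcal F_\omega$, ``the uniformity in $v$ coming from the `for every $v$' quantifier in $E_{good}$.'' This quantifier interchange is not legitimate and the interchanged statement is false: $\mathcal F$ is a symplectic block with a nonzero exponent, so its Lyapunov spectrum contains exponents strictly below $\lambda$ (indeed negative ones), and along any fixed sample path, respectively along the Teichm\"uller geodesic through $h_s\omega$ for a.e.\ $s$, the Oseledets filtration supplies vectors $v$ whose growth rate is strictly smaller than $\lambda$. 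The content of the Chaika--Eskin estimate is precisely that for each \emph{fixed} $v$ the set of paths for which $v$ falls into the slow filtration is negligible; ``for every $v$, most paths'' cannot be upgraded to ``for most paths, every $v$.'' Consequently the statement you feed into the final Fubini step (``for a.e.\ $s$, the growth is in the window for all $v$'') is not only unproved but actually has measure zero in $s$ once $L$ is large, so the argument as written does not establish the lemma.

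The repair is to drop the interchange, which is also unnecessary: $\hat E_{good}(L,\tilde\epsilon)$ only demands the measure bound on $s$ \emph{separately for each} $v$, with the good set of $s$ allowed to depend on $v$. So fix $v$, take $N\approx L/\lambda$, use Lemma \ref{lem:Egood} to get $\nu(E_{good}(N,\epsilon))$ close to $1$, and for $\omega\in E_{good}(N,\epsilon)$ transfer the bound $\mu^N(H(v))>1-\epsilon$ through Lemma \ref{lem:sublin hor} (quantified at the finite scale $N$ by an Egorov-type truncation of the tracking error, absorbed by Lemma \ref{lem:KZbound}) and through the fact that the law of $\bar s$ is in the Lebesgue class with density bounded away from $0$ and $\infty$ on compact intervals; this yields, for that fixed $v$, the required Lebesgue bound on the set of good $s\in[-1,1]$, which is the defining condition of $\hat E_{good}(L,\tilde\epsilon)$. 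Your Birkhoff-frequency/chaining paragraph is superfluous for this single-scale statement and should be removed, since it is exactly where the illegitimate uniformity in $v$ is smuggled in; your remarks about rescaling $\bar s$ into $[-1,1]$ are fine and are a genuine (minor) point one must address.
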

We conclude the proof of Proposition \ref{prop:growth mechanism}.  Let $\delta >0$ and $0<\tilde{\epsilon}< \frac 1 2 \epsilon$. 
By Lemma~\ref{lem:good growth}, for $L$ large enough we have that 
$\nu(\hat{E}_{good}(L,\tilde\epsilon)) >  1 -\delta$ and  $e^{\frac 1 2 \epsilon L}>8$. 

 We then choose $\hat{U}:=\hat{E}_{good}(L,\tilde\epsilon)$ which is an open set by continuity of the Hodge norm.  

By construction, for all $\omega \in U$ and for all parallel horocycle sections $v$ at 
$\omega$, that is for the all the maps $v : [-1,1] \to \mathcal{F}$ such that $v(s) \in 
\mathcal{F}_{h_s\omega}$ and $\pi_\omega (s)v(s) =  v(0)$, we have
$$\text{\rm Leb}(\{s\in [-1,1]: e^{(\lambda-\tilde{\epsilon} )L}
< \frac{ |KZ(g_L,h_s\omega)v(s)|}{ | v(s) |}
< e^{(\lambda+\tilde{\epsilon})L}\})>  2  -\delta\,.$$
Let now $u \in V_{h_s\omega}$ be a vector such that $\angle (u, v(s)) < \hat \epsilon$. We can write $u = a v(s) +  b v^\perp(s)$ with $v^\perp(s) \perp v(s)$ (with respect to the Hodge inner product) and by assumption   we have $$\vert a \vert \geq \frac{ 1-\hat \epsilon}{2} \frac{| u|}{|v(s)|}
\quad \text{ and } \quad \vert b \vert \leq 2 \hat \epsilon \frac{| u|}{ | v^\perp(s)|} \,.$$
It follows that,   {if $\hat \epsilon < e^{-L} e^{(\lambda - \tilde \epsilon) L} / 8$, by applying Lemma~\ref{lem:KZbound},  we have 
$$
|KZ(g_L,h_s\omega)u| \geq  e^{(\lambda-\tilde{\epsilon} )L}  \frac{ 1-\hat \epsilon}{2} | u|   -
2 \hat \epsilon   e^{L}   |u| \geq  \frac{e^{(\lambda-\tilde{\epsilon} )L}}{8}|u| > e^{(\lambda-\epsilon)L}|u|\,. 
$$
The argument is completed.

\subsection{Developing} \label{sec:EMM stuff}

\begin{thm}(Eskin-Mirzakhani-Mohammadi)\label{thm:EMM 2.7} Let $U$ be an open set, $\nu$ be an $SL(2,\mathbb{R})$ invariant and ergodic measure. 
For any $\epsilon>0$ there exists a finite set of invariant manifolds, $\mathcal{Z}_1,\dots,\mathcal{Z}_n$ so that for any  
$\mathcal{C} \subset \supp(\nu) \setminus \cup_{i=1}^n\mathcal{Z}_i$, compact, there exists $T_1>0$ so that for all $\omega \in \mathcal{C}$  and  for all $T\geq T_1$, 
$$\frac 1 {2T} \int_{-1}^1\int_0^T\chi_U(g_th_s\omega)dtds>\nu(U)-\epsilon.$$
\end{thm}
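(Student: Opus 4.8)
The plan is to deduce Theorem~\ref{thm:EMM 2.7} from the equidistribution theorem for the $SL(2,\mathbb R)$-action of Eskin--Mirzakhani \cite{EskMir} and Eskin--Mirzakhani--Mohammadi \cite{EskMirMo}, of which it is essentially a reformulation. For $\omega\in\supp(\nu)$ and $T>0$ put
\[
\mu_{\omega,T}:=\frac{1}{2T}\int_{-1}^{1}\int_{0}^{T}\delta_{g_t h_s\omega}\,dt\,ds ,
\]
a Borel probability measure, so that the quantity to be bounded below is $\mu_{\omega,T}(U)$. Since $U$ is open, I would first pass to a continuous test function: fix $\psi$ with $0\le\psi\le\chi_U$ and $\int\psi\,d\nu>\nu(U)-\epsilon/2$. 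Then $\mu_{\omega,T}(U)\ge\mu_{\omega,T}(\psi)$, and it suffices to produce $\mathcal Z_1,\dots,\mathcal Z_n$ such that for every compact $\mathcal C\subset\supp(\nu)\setminus\bigcup_i\mathcal Z_i$ there is $T_1$ with $\mu_{\omega,T}(\psi)>\int\psi\,d\nu-\epsilon/2$ for all $\omega\in\mathcal C$ and all $T\ge T_1$.

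Next I would recall the relevant facts from \cite{EskMir,EskMirMo}: (i) for every $\omega$ the orbit closure $\mathcal N(\omega):=\overline{SL(2,\mathbb R)\,\omega}$ is an affine invariant submanifold carrying a unique affine invariant probability measure $\nu_{\mathcal N(\omega)}$, and $\mu_{\omega,T}\to\nu_{\mathcal N(\omega)}$ weak-$*$ as $T\to\infty$; (ii) there is no escape of mass, so the family $\{\mu_{\omega,T}:\omega\in\supp(\nu),\,T\ge1\}$ is tight (the same quantitative recurrence underlies Lemma~\ref{lem:in compact}), and weak-$*$ subsequential limits of $\mu_{\omega_k,T_k}$ along $T_k\to\infty$, $\omega_k\in\supp(\nu)$, are probability measures; (iii) every such limit is $SL(2,\mathbb R)$-invariant and affine, hence a convex combination of measures $\nu_{\mathcal Y}$ with $\mathcal Y\subseteq\supp(\nu)$ affine invariant; (iv) there are only countably many affine invariant submanifolds, their dimensions are bounded by $\dim\supp(\nu)$, and they are isolated in the sense of \cite{EskMirMo}.

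The core step, which I expect to be the main obstacle, is the finiteness of the exceptional family. Let $\mathcal A$ be the set of proper affine invariant submanifolds $\mathcal W\subsetneq\supp(\nu)$ with $\nu_{\mathcal W}(\psi)\le\int\psi\,d\nu-\epsilon/4$; using (ii)--(iv) one shows $\mathcal A$ has only finitely many maximal elements, which we take to be $\mathcal Z_1,\dots,\mathcal Z_n$. Indeed, an infinite family of pairwise incomparable members of $\mathcal A$ would, by no escape of mass and the classification of limits, have its affine measures subconverge to an affine measure $\mu$ with $\mu(\psi)\le\int\psi\,d\nu-\epsilon/4$, so $\mu\ne\nu$; by the isolation property (iv) the ergodic decomposition of $\mu$ contains a proper affine submanifold, still in $\mathcal A$, strictly containing all but finitely many terms of the family, and iterating, the dimensions of the affine submanifolds produced strictly increase, so (as in \cite{EskMirMo}) the process terminates, a contradiction. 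With $\mathcal Z_1,\dots,\mathcal Z_n$ fixed, I would prove the uniformity by contradiction: if some compact $\mathcal C\subset\supp(\nu)\setminus\bigcup_i\mathcal Z_i$ admitted $\omega_k\in\mathcal C$ and $T_k\to\infty$ with $\mu_{\omega_k,T_k}(\psi)\le\int\psi\,d\nu-\epsilon/2$, then after extracting $\omega_k\to\omega_\infty\in\mathcal C$ and, by (ii), $\mu_{\omega_k,T_k}\to\mu$, property (iii) gives an affine invariant $\mu$ on $\supp(\nu)$ with $\mu(\psi)\le\int\psi\,d\nu-\epsilon/2<\int\psi\,d\nu$, so some ergodic component of $\mu$ is $\nu_{\mathcal W}$ with $\mathcal W\in\mathcal A$; one then shows, using the isolation property (iv) applied to the equidistribution measures $\mu_{\omega_k,T_k}$ accumulating on a measure charging $\mathcal W$, that $\omega_k$ for large $k$ must lie in the maximal element of $\mathcal A$ above $\mathcal W$, i.e.\ in some $\mathcal Z_i$, contradicting $\mathcal C\cap\mathcal Z_i=\emptyset$. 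For any $\omega\in\mathcal C$ not caught in this way, $\mathcal N(\omega)\notin\mathcal A$, so $\nu_{\mathcal N(\omega)}(\psi)>\int\psi\,d\nu-\epsilon/4$ and (i) yields $\mu_{\omega,T}(\psi)>\int\psi\,d\nu-\epsilon/2$ for $T$ large, hence $\mu_{\omega,T}(U)>\nu(U)-\epsilon$.

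In short, everything except the finiteness of $\{\mathcal Z_i\}$ and the accompanying uniformity of $T_1$ — the reduction to a continuous test function, the weak-$*$ extractions, and the dimension bookkeeping — is routine once (i)--(iv) are granted; the real weight is carried by the no-escape-of-mass and isolation theorems of \cite{EskMirMo}, which is precisely why this statement is best regarded as a consequence of their work rather than proved from scratch.
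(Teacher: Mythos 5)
Your opening reduction is exactly the paper's entire proof: the paper simply picks $\phi\in C_c(\mathcal H)$ with $0\le\phi\le\chi_U$, $\supp(\phi)\subset U$ and $\int\phi\,d\nu>\nu(U)-\epsilon/2$, and quotes \cite[Theorem 2.7]{EskMirMo}, whose statement already contains both the finite exceptional family $\mathcal Z_1,\dots,\mathcal Z_n$ and the uniform $T_1$ over compact sets $\mathcal C$ disjoint from them. Had you stopped after the test-function step and cited that theorem (the statement here is, after all, attributed to Eskin--Mirzakhani--Mohammadi), you would coincide with the paper. Everything after your first paragraph is an attempt to rederive the cited theorem from your facts (i)--(iv), and that derivation has a genuine gap at the decisive point.

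The gap is in the uniformity argument. You apply ``the isolation property (iv)'' to the finite-time averages $\mu_{\omega_k,T_k}$ and conclude that $\omega_k$ must eventually lie in some $\mathcal Z_i$. The isolation theorem of \cite{EskMirMo} concerns sequences of \emph{affine} measures $\nu_{\mathcal N_k}$, not finite-time averages of orbits of points, and the asserted implication does not follow from (i)--(iv): a point of $\mathcal C$ with dense $SL(2,\mathbb R)$-orbit can have its averages up to time $T_k$ accumulate on a measure charging a proper affine submanifold $\mathcal W$ (it may simply lie close to $\mathcal W$ without being in any proper submanifold), and (i) is a pointwise statement with no uniformity in the basepoint --- which is precisely what is at stake. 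What actually closes the argument in \cite{EskMirMo} is the quantitative avoidance provided by the functions $f_{\mathcal M}$ of their Proposition 2.13 (quoted in this paper as Theorem~\ref{thm:EMM 2.13} and Lemma~\ref{lem:2.13 for hor}): $f_{\mathcal Z_i}$ is bounded on the compact set $\mathcal C$ disjoint from $\mathcal Z_i$, the averaging inequality propagates this bound to $\mu_{\omega_k,T_k}(f_{\mathcal Z_i})$ uniformly in $k$, and then truncation and lower semicontinuity show that every weak-$*$ limit $\mu$ satisfies $\mu(\mathcal Z_i)=\mu(\{f_{\mathcal Z_i}=\infty\})=0$, contradicting the positive weight your ergodic component $\nu_{\mathcal W}$, $\mathcal W\subseteq\mathcal Z_i$, would give it. Without this (or an equivalent nondivergence/linearization input, which is not among (i)--(iv)) the contradiction does not materialize. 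Your finiteness-of-maximal-elements sketch, by contrast, can be made rigorous from the isolation theorem together with compactness of the space of affine probability measures, so the real defect is concentrated in the uniformity step.
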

This follows from \cite[Theorem 2.7]{EskMirMo} by choosing $\phi\in C_c(\mathcal{H})$, 
with $supp(\phi)\subset U$, $0\leq \phi\leq 1$ with $\|\phi\|_1>\nu(U)-\frac \epsilon2$. 

\begin{thm}(\cite[Proposition 2.13]{EskMirMo}) \label{thm:EMM 2.13} Let $\mathcal{M}\subset \mathcal{H}$ be an affine invariant submanifold. (In this proposition $\mathcal{M}=\emptyset$ is allowed.) Then there exists an $SO(2)$-invariant function 
$f_{\mathcal{M}}: \mathcal{H} \to [1,\infty]$  with the following properties:
 \begin{enumerate}
 \item $f_{\mathcal{M}}(\omega)=\infty$ if and only if $\omega\in \mathcal{M}$, and $f_{\mathcal{M}}$ is bounded on compact subsets of $\mathcal{H}\setminus \mathcal{M}$. For any $\rho>0$, the set 
 $\{\omega : f_{\mathcal{M}}(\omega)\leq \rho\}$ is a compact subset of 
 $\mathcal{H}\setminus \mathcal{M}$.
\item There exists $b > 0$ (depending on $\mathcal{M}$) and for every $0 < c < 1$ there exists $t_0 >0$ (depending on $\mathcal{M}$ and $c$) such that for all $\omega\in \mathcal{H}\setminus \mathcal{M}$ 
and all $t>t_0$, 
$$\frac 1 {2\pi} \int_0^{2\pi} f_{\mathcal{M}}(g_tr_{\theta}\omega)d\theta \leq cf_{\mathcal{M}}(\omega) + b.$$
\item  There exists $\sigma > 1$  and $V\subset SL(2,\mathbb{R})$ a neighborhood of the identity so that 
for all $g\in V$ and all $\omega \in \mathcal{H}$,
$$\sigma^{-1}f_{\mathcal{M}}(\omega) \leq  f_{\mathcal{M}}(g\omega) \leq \sigma f_{\mathcal{M}}(\omega).$$
\end{enumerate}
\end{thm}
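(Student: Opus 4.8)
The plan is to build $f_{\mathcal M}$ from two pieces: a \emph{local} function, defined in a neighborhood of $\mathcal M$, that equals $+\infty$ exactly on $\mathcal M$ and supplies the contraction in (2) near $\mathcal M$; and a fixed large multiple of the proper $SO(2)$-invariant function $V$ of Eskin--Masur and Athreya used in Lemma~\ref{lem:in compact} (see \cite{ath thesis}), which supplies the contraction near the cusps of the stratum. One adds these, extends smoothly, and finally symmetrizes over $SO(2)$.

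For the local piece I would use that $\mathcal M$ is an affine invariant submanifold: in period coordinates it is locally the intersection of $\mathcal H(\alpha)$ with a linear subspace, so it carries a well-defined normal bundle $N$, and the derivative of $g_t r_\theta$ on $N$ is, up to bounded distortion, a direct sum of copies of the standard representation of $SL(2,\mathbb R)$ on $\mathbb R^2$. Fixing a smooth family of local trivializations near $\mathcal M$ and writing $n(\omega)$ for the resulting normal component, set $\tilde f_{\mathcal M}(\omega) = \|n(\omega)\|^{-\delta}$ for a small $\delta>0$. The engine is the elementary spherical-average estimate
$$
\frac{1}{2\pi}\int_0^{2\pi}\|g_t r_\theta w\|^{-\delta}\,d\theta \;\le\; \kappa(t,\delta)\,\|w\|^{-\delta}, \qquad w\in\mathbb R^2,
$$
with $\kappa(t,\delta)<1$ for $t$ large and $\delta$ small, which holds because $\{\theta:\|g_t r_\theta w\|\le e^{-t+R}\|w\|\}$ has measure $O(e^{-R})$ (the same computation as in Eskin--Margulis--Mozes). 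Transporting this through the bounded distortion on $N$, and controlling the orbit segments $\{g_\tau r_\theta\omega:0\le\tau\le t\}$ that partially leave the neighborhood of $\mathcal M$, gives the averaging inequality for $\tilde f_{\mathcal M}$, with contraction constant $<1$, for $\omega$ in a small neighborhood of $\mathcal M$ and all sufficiently large $t$.

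To globalize, recall that $V$ is proper, $SO(2)$-invariant, satisfies $\frac1{2\pi}\int_0^{2\pi}V(g_t r_\theta\omega)\,d\theta \le c\,e^{-\gamma t}V(\omega)+b$ for $t$ large, and obeys $a^{-1}V\le V(g\,\cdot)\le aV$ for $g$ near the identity. Pick nested neighborhoods $\mathcal N'\subset\subset\mathcal N$ of $\mathcal M$; replace $\tilde f_{\mathcal M}$ by a smooth function $F$ that agrees with it on $\mathcal N'$ and is bounded on $\mathcal H\setminus\mathcal N$, and set $f_{\mathcal M} = F + \kappa_0 V$ with $\kappa_0$ large enough that the contraction of $\kappa_0 V$ dominates the bounded error from $F$ outside $\mathcal N'$. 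Splitting $\int_0^{2\pi}$ according to whether $g_t r_\theta\omega$ lies deep inside $\mathcal N'$ or not---using, for the second range, that the set of $\theta$ for which $g_t r_\theta\omega$ penetrates very close to $\mathcal M$ has small measure (again the $O(e^{-R})$ estimate, legitimate since $\mathcal M$ is invariant, so $g_t r_\theta\omega\notin\mathcal M$)---and combining the local inequality with the $V$-inequality yields (2) for all $\omega\in\mathcal H\setminus\mathcal M$ and all $t>t_0$. Property (1) is then immediate ($f_{\mathcal M}=\infty$ precisely on $\mathcal M$ from the $\|n(\omega)\|^{-\delta}$ factor; properness on the complement and compactness of sublevel sets from those of $V$ and of $\tilde f_{\mathcal M}$), and (3) follows from smoothness of $F$, compactness, bounded distortion of the cocycle on $N$, and the corresponding property of $V$. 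Finally, replace $f_{\mathcal M}$ by $\omega\mapsto\frac1{2\pi}\int_0^{2\pi}f_{\mathcal M}(r_\psi\omega)\,d\psi$; since $r_\psi$ commutes with $r_\theta$ this circle average commutes with the average in (2), so all three properties persist and the function becomes $SO(2)$-invariant.

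The hard part will be the local analysis near $\mathcal M$: using affineness to pin down the cocycle on the normal bundle, proving the contraction of $\frac1{2\pi}\int_0^{2\pi}\|n(g_t r_\theta\omega)\|^{-\delta}\,d\theta$ \emph{uniformly} as $\omega\to\mathcal M$, and---most delicately---handling the orbit arcs that exit and re-enter the neighborhood of $\mathcal M$, so that a \emph{single} inequality holds for all $\omega\in\mathcal H\setminus\mathcal M$ and all large $t$ rather than only for $t$ in a bounded window, and so that $F$ is genuinely smooth. The efficient route is to import these estimates from \cite[Section 2]{EskMirMo} (which build on Eskin--Masur and \cite{ath thesis}) rather than reprove them from scratch; indeed the statement we use is exactly \cite[Proposition 2.13]{EskMirMo}.
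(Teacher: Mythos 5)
This statement is not proved in the paper at all: it is quoted verbatim, with attribution, as Proposition 2.13 of \cite{EskMirMo}, and the paper's ``proof'' is precisely that citation. Since your final paragraph concedes that the efficient (and intended) route is to import the statement from \cite[Section 2]{EskMirMo}, your proposal ultimately coincides with what the paper does, and at that level it is fine.

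However, read as an independent construction, the sketch has a genuine gap that you should not paper over. The key claim that, in period coordinates, the derivative of $g_t r_\theta$ on the normal bundle $N$ to $\mathcal M$ is ``up to bounded distortion a direct sum of copies of the standard representation of $SL(2,\mathbb R)$ on $\mathbb R^2$'' is false in general. The transverse directions to an affine invariant submanifold are governed by the Kontsevich--Zorich cocycle on (relative) cohomology, tensored with the standard representation; its expansion rates are of the form $e^{(\pm 1+\lambda_i)t}$ with Lyapunov exponents $\lambda_i\in[-1,1]$, which can vanish or nearly cancel the hyperbolicity of $g_t$, and the distortion relative to a fixed trivialization is not uniformly bounded along orbits that travel in the thin part. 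Consequently the elementary spherical estimate $\frac1{2\pi}\int_0^{2\pi}\|g_tr_\theta w\|^{-\delta}d\theta\le\kappa(t,\delta)\|w\|^{-\delta}$ for $w\in\mathbb R^2$ does not transport to $\|n(\omega)\|^{-\delta}$ by a bounded-distortion argument; this is exactly why the construction in \cite{EskMirMo} works with the Hodge/AGY norms, Forni's bounds on the cocycle, and a carefully weighted combination with the Eskin--Masur/Athreya function of \cite{ath thesis}, and why their Proposition 2.13 is a substantial technical result rather than a two-step averaging exercise. The handling of orbit arcs that exit and re-enter a neighborhood of $\mathcal M$ (your ``most delicate'' point) is likewise the core of their argument, not a routine patch. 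So as a blind proof the proposal would not go through as written; as a reduction to the literature it matches the paper.
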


This implies a similar result for horocycles:

\begin{lem}\label{lem:2.13 for hor}\cite[Lemma 3.5]{AAEKMU} Let $f_{\mathcal{M}}$ be as in Theorem \ref{thm:EMM 2.13}. Then there exists a constant $b'>0$ so that for all $0<a<1$ there exists $\bar{t}_0= \bar{t}_0(a)$ such that for all $t> \bar{t}_0$ and for all $\omega \in \mathcal{H}\setminus \mathcal{M}$ we have 
$$\int_{-1}^1 f_{\mathcal{M}}(g_th_s \omega)ds <  a f_{\mathcal{M}}(\omega)+  b'\,.$$
\end{lem}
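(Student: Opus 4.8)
The strategy is to reduce the horocycle estimate to the known circle estimate in Theorem~\ref{thm:EMM 2.13}(2) by expressing a horocycle arc in terms of rotations, using the $NAN$-type decomposition already exploited in the proof of Lemma~\ref{lem:in compact}. Concretely, for a horocycle element $h_s$ one has the identity $h_s = r_{\theta(s)} g_{\tau(s)} r_{\psi(s)}$ (a $KAK$ decomposition), and crucially, after composing with $g_t$, one wants to rewrite $g_t h_s$ so that the innermost surviving rotation is pushed through. The cleaner route, following \cite[Lemma 3.5]{AAEKMU}, is: change variables from the horocycle parameter $s\in[-1,1]$ to the angle $\theta$ on (a portion of) the circle. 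Since $r_\theta = \hat h_{\tan\theta}\, g_{\log\cos\theta}\, h_{-\tan\theta}$ (and the analogous decomposition for $h_s$ in terms of $r_\theta$), the map $s\mapsto\theta$ is a diffeomorphism of $[-1,1]$ onto an arc of fixed positive length around $0$, with derivative bounded above and below by absolute constants; moreover $g_t h_s = g_t r_{\theta(s)} k(s)$ where $k(s)$ lies in a fixed compact subset of $SL(2,\mathbb R)$ (in fact $g_t h_s r_{-\theta(s)}$ stays bounded uniformly in $t$ and $s\in[-1,1]$, because $h_s r_{-\theta(s)}$ is, up to bounded error, a bounded geodesic segment).

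With this in hand the steps are: (i) Apply property (3) of Theorem~\ref{thm:EMM 2.13}: since $g_t h_s = (g_t r_{\theta(s)})\, k(s)$ with $k(s) \in V^{-1}$ after subdividing, or more simply since $f_{\mathcal M}$ changes by at most a bounded factor under the bounded element $k(s)$, we get $f_{\mathcal M}(g_t h_s\omega) \le \sigma^{N} f_{\mathcal M}(g_t r_{\theta(s)}\omega)$ for a fixed $N$ depending only on the compact set containing all the $k(s)$. (ii) Change variables $s\mapsto\theta$ in the integral, absorbing the bounded Jacobian into a constant, to bound $\int_{-1}^1 f_{\mathcal M}(g_t h_s\omega)\,ds$ by a constant multiple of $\int_{\theta_1}^{\theta_2} f_{\mathcal M}(g_t r_\theta\omega)\,d\theta$ over a fixed sub-arc, hence by a constant multiple of $\int_0^{2\pi} f_{\mathcal M}(g_t r_\theta\omega)\,d\theta$. (iii) Apply property (2) of Theorem~\ref{thm:EMM 2.13} with a small constant $c$: for $t$ larger than the corresponding $t_0$, this last integral is at most $2\pi(c f_{\mathcal M}(\omega)+b)$. (iv) Collect constants: choosing $c$ small enough relative to the accumulated multiplicative constant $C=C(\sigma,N,\text{Jacobian})$ so that $Cc < a$, and setting $b' := Cb$, yields $\int_{-1}^1 f_{\mathcal M}(g_t h_s\omega)\,ds < a f_{\mathcal M}(\omega) + b'$ for all $t > \bar t_0(a)$, where $\bar t_0(a)$ is the $t_0$ attached to this choice of $c$. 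The $SO(2)$-invariance of $f_{\mathcal M}$ is what makes property (2) available as stated on the full circle; we do not need it further.

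The main obstacle, and the only point requiring genuine care, is step (i)–(ii): verifying that the reparametrization $s\mapsto\theta$ on $[-1,1]$ has uniformly (in $t$) bounded Jacobian and that $g_t h_s r_{-\theta(s)}$ stays in a fixed compact set independent of $t$. This is a purely $SL(2,\mathbb R)$ computation — one writes $h_s = \hat h_{\sigma(s)} g_{\rho(s)} h_{\mu(s)}$ or directly compares $h_s$ with $r_{\theta}$ via \eqref{eq:NAN}, checking that the ``$h$'' and ``$\hat h$'' pieces have parameters of size $O(1)$ for $s\in[-1,1]$ and that only the diagonal piece can grow — but after left-multiplication by $g_t$ the relevant growth is exactly matched by $g_t r_\theta$, so the residual element is bounded. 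Since the range $s\in[-1,1]$ is compact and bounded away from the problematic directions $\theta=\pm\pi/2$, all of this is uniform. Everything else is bookkeeping of constants, and the statement follows.
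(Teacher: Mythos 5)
The paper itself gives no proof of this lemma: it is quoted from \cite[Lemma 3.5]{AAEKMU}, so there is no internal argument to compare against, and your reduction of the horocycle average to the circle average of Theorem~\ref{thm:EMM 2.13}(2), via the decomposition \eqref{eq:NAN} and the bounded-distortion property (3), is the natural route (and is essentially how the cited source argues). Your steps (ii)--(iv) are fine as stated: with $\theta(s)=-\arctan s$ the change of variables has Jacobian between $1/2$ and $1$ on $[-1,1]$, positivity of $f_{\mathcal M}$ lets you enlarge the arc $[-\pi/4,\pi/4]$ to the full circle, and the bookkeeping $b'=Cb$, $Cc<a$, $\bar t_0(a)=t_0(c)$ indeed produces a $b'$ independent of $a$, as the statement requires.

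The crux, your step (i), needs to be restated, because both formulations you give are off. The identity $g_th_s=(g_tr_{\theta(s)})k(s)$ with $k(s)=r_{-\theta(s)}h_s$ bounded is true but useless: property (3) compares $f_{\mathcal M}$ at a point and at its image under a bounded element acting on the \emph{left} of the evolved point $g_tr_{\theta(s)}\omega$, and moving $k(s)$ to the left conjugates it by $g_t r_{\theta(s)}$, which destroys boundedness. And the claim that ``$g_th_sr_{-\theta(s)}$ stays bounded uniformly in $t$'' is literally false (its norm grows like $e^t$). What is true, and what your closing sentence suggests you intend, is that $g_th_s\,(g_tr_{\theta(s)})^{-1}$ is uniformly bounded: from \eqref{eq:NAN}, with $\theta(s)=-\arctan s$,
$$
h_s \;=\; g_{\frac12\log(1+s^2)}\,\hat h_{s}\,r_{\theta(s)}, \qquad\text{hence}\qquad
g_th_s \;=\; g_{\frac12\log(1+s^2)}\,\hat h_{se^{-2t}}\,g_t\,r_{\theta(s)},
$$
and for $t\geq 0$, $|s|\leq 1$ the left factor $g_{\frac12\log(1+s^2)}\hat h_{se^{-2t}}$ lies in a fixed compact set $K_0$ (it is the contraction $g_t\hat h_s g_{-t}=\hat h_{se^{-2t}}$ that makes this work, which is the precise content of your phrase ``the growth is exactly matched by $g_tr_\theta$''). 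Covering $K_0$ by products of at most $N$ elements of the neighborhood $V$ from Theorem~\ref{thm:EMM 2.13}(3) gives $f_{\mathcal M}(g_th_s\omega)\leq \sigma^N f_{\mathcal M}(g_tr_{\theta(s)}\omega)$ uniformly in $t$ and $s$, after which your steps (ii)--(iv) complete the proof. So the plan is correct, but the uniform boundedness must be asserted for $g_th_s r_{-\theta(s)}g_{-t}$, not for $g_th_sr_{-\theta(s)}$, and the bounded correction must sit on the left; with that fix the argument is complete.
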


For each $\rho \in \mathbb{R}^+$, let $\mathfrak{C}_\rho=\{\omega:f_{\mathcal{M}}(\omega)\leq \rho\}$ and, for
all $N\in \mathbb N$, let
$$
Z_N=\Big\{s\in[-1,1]: g_{j t}h_s\omega \notin \mathfrak{C}_\rho \text{ for all }j \in \, \{1,2,\dots,N\}\Big\}\,.
$$

\begin{prop} \label{prop:cpct integer} \cite[Proposition 3.7]{AAEKMU} 
Let $f_{\mathcal{M}}$ be as in Theorem \ref{thm:EMM 2.13} and let $b'>0$ and $\bar{t}_0=\bar{t}_0(a)$ be as in Lemma \ref{lem:2.13 for hor}. There exist $C_1> 1$ (independent of
$\omega$ and $a$) such that for all $a\in (0,1)$, all $\rho > C_1 b'/a$,  all $t\geq \bar{t}_0$ such that $e^t\in \mathbb N$ and all $N\in \mathbb N$, 
$$\int_{Z_{N-1}}  f_{\mathcal{M}}  (g_{Nt}h_s\omega)ds<  (2a)^{N} f_{\mathcal{M}}(\omega)+(2a)^{N-1}b'\,.$$
\end{prop}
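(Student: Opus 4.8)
The plan is to prove Proposition~\ref{prop:cpct integer} by induction on $N$, using Lemma~\ref{lem:2.13 for hor} as the engine at each step, together with a careful bookkeeping of the ``bad'' set $Z_{N-1}$ where the orbit has avoided the compact set $\mathfrak{C}_\rho$ at all the previous sampling times $t, 2t, \dots, (N-1)t$.

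First I would set up the base case $N=1$: here $Z_0 = [-1,1]$, and Lemma~\ref{lem:2.13 for hor} (applied with $a$ in place of the lemma's $a$, since $t \geq \bar t_0(a)$) gives directly
\[
\int_{-1}^1 f_{\mathcal{M}}(g_t h_s\omega)\, ds < a f_{\mathcal{M}}(\omega) + b' < 2a f_{\mathcal{M}}(\omega) + b',
\]
which is the claimed bound with $N=1$ (and is in fact slightly stronger). For the inductive step, suppose the bound holds for $N-1$, i.e. $\int_{Z_{N-2}} f_{\mathcal{M}}(g_{(N-1)t}h_s\omega)\,ds < (2a)^{N-1}f_{\mathcal{M}}(\omega) + (2a)^{N-2}b'$. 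The key point is the semigroup/cocycle identity $g_{Nt}h_s\omega = g_t\bigl(g_{(N-1)t}h_s\omega\bigr)$ — but to apply the horocycle averaging Lemma~\ref{lem:2.13 for hor} at the point $\omega' := g_{(N-1)t}h_s\omega$ I need to express the one-dimensional family $\{g_{Nt}h_s\omega : s \in [-1,1]\}$ as a horocycle arc through $\omega'$. Using $g_{(N-1)t}h_s = h_{e^{2(N-1)t}s} g_{(N-1)t}$ and the assumption $e^t \in \mathbb{N}$ (so that $e^{2(N-1)t}$ is an integer), the reparametrization $s \mapsto e^{2(N-1)t}s$ maps $[-1,1]$ onto $[-e^{2(N-1)t}, e^{2(N-1)t}]$, which is a union of $e^{2(N-1)t}$ integer-translated unit intervals; on each such unit subinterval one applies Lemma~\ref{lem:2.13 for hor} at the appropriate base point. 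The factor of $2$ in $(2a)^N$ (as opposed to just $a^N$) is precisely what absorbs the extra contributions coming from the translates of the unit interval and from the fact that $f_{\mathcal{M}}\geq 1$; the hypothesis $\rho > C_1 b'/a$ is what controls the ratio between the ``main term'' $a f_{\mathcal{M}}(\omega')$ and the additive constant $b'$ uniformly over the relevant $\omega'$.

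Concretely, I would: (i) restrict the integral $\int_{Z_{N-1}} f_{\mathcal{M}}(g_{Nt}h_s\omega)\,ds$ to be over $Z_{N-1}\subset Z_{N-2}$, slice $Z_{N-2}$ along the reparametrized variable into pieces lying over unit intervals, and apply Lemma~\ref{lem:2.13 for hor} on each piece to bound it by $a \int_{Z_{N-2}\cap(\text{that piece})} f_{\mathcal{M}}(g_{(N-1)t}h_s\omega)\,ds$ plus a $b'$-term times the measure of the piece; (ii) sum over the pieces, using that $Z_{N-1}\subset Z_{N-2}$ forces the contribution of the additive constants to be dominated by the already-small integral $\int_{Z_{N-2}} f_{\mathcal{M}}(g_{(N-1)t}h_s\omega)\,ds$ once $\rho$ is large (here is where $\rho > C_1 b'/a$ and the definition of $Z_{N-1}$ — the orbit lies outside $\mathfrak{C}_\rho$, so $f_{\mathcal{M}} > \rho$ there — interact: on $Z_{N-1}$ the integrand at the earlier time is $> \rho$, so $b'\cdot|Z_{N-1}| < (b'/\rho)\int_{Z_{N-1}} f_{\mathcal{M}}(g_{(N-1)t}h_s\omega)\,ds \leq (a/C_1)\int_{Z_{N-2}} f_{\mathcal{M}}(g_{(N-1)t}h_s\omega)\,ds$); (iii) combine to get $\int_{Z_{N-1}} f_{\mathcal{M}}(g_{Nt}h_s\omega)\,ds < 2a\int_{Z_{N-2}} f_{\mathcal{M}}(g_{(N-1)t}h_s\omega)\,ds$, and plug in the inductive hypothesis to conclude $< (2a)^N f_{\mathcal{M}}(\omega) + (2a)^{N-1}b'$.

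The main obstacle I anticipate is step (i)–(ii): getting the reparametrization-and-slicing argument exactly right so that applying the horocycle lemma on each integer-translated unit interval does not lose more than the factor of $2$, and verifying that the constant $C_1$ can indeed be chosen independently of $\omega$, $a$, and $N$. In particular I need to be careful that $\bar t_0(a)$ from Lemma~\ref{lem:2.13 for hor} only depends on $a$ and not on the base point, so that the same $t\geq \bar t_0(a)$ works uniformly at every base point $g_{(N-1)t}h_s\omega$ appearing in the induction; this is fine since Lemma~\ref{lem:2.13 for hor} is stated uniformly over $\omega\in\mathcal{H}\setminus\mathcal{M}$, but one must also check the orbit points in question lie outside $\mathcal{M}$, which follows because $f_{\mathcal{M}}$ is finite along them (they are genuine points of the stratum, and $f_{\mathcal M}=\infty$ only on $\mathcal M$). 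Property (3) of Theorem~\ref{thm:EMM 2.13} may additionally be needed to pass between the endpoints of adjacent unit subintervals cleanly.
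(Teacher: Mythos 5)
You are proving a statement the paper itself does not prove --- it is quoted verbatim from \cite[Prop.\ 3.7]{AAEKMU} --- so your argument has to stand on its own, and as written the inductive step has a genuine gap exactly at the place you flagged. When you slice at the finest scale into pieces $I_j$ of length $2e^{-2(N-1)t}$ and apply Lemma~\ref{lem:2.13 for hor} at a base point $\omega'_j=g_{(N-1)t}h_{c_j}\omega$ on each piece meeting $Z_{N-1}$, what the lemma yields is $\sum_j e^{-2(N-1)t}\bigl(a\,f_{\mathcal M}(\omega'_j)+b'\bigr)$, which (up to the multiplicative constant coming from Theorem~\ref{thm:EMM 2.13}(3)) is $a\int_E f_{\mathcal M}(g_{(N-1)t}h_s\omega)\,ds+\tfrac{b'}{2}\,|E|$, where $E$ is the union of the \emph{whole} pieces meeting $Z_{N-1}$. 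Neither term is controlled by your inductive hypothesis: $E$ is not contained in $Z_{N-2}$ (only $E\cap Z_{N-1}$ is), and $|Z_{N-2}\cap I_j|$ can be an arbitrarily small fraction of $|I_j|$, so the step in your item (i) of replacing $e^{-2(N-1)t}f_{\mathcal M}(\omega'_j)$ by $\int_{Z_{N-2}\cap I_j}f_{\mathcal M}(g_{(N-1)t}h_s\omega)\,ds$ fails --- a fixed positive quantity cannot be bounded by an integral over an arbitrarily small subset; likewise the additive contribution produced by the slicing is $\tfrac{b'}{2}|E|$, not the $b'\,|Z_{N-1}|$ in your displayed chain, and $|E|$ can be of order $1$ while $|Z_{N-1}|$ is tiny. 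The one structural fact available to repair this, Theorem~\ref{thm:EMM 2.13}(3), only shows that if $I_j$ meets $Z_{N-1}$ then $f_{\mathcal M}(g_{it}h_s\omega)>\rho/\sigma_1$ on all of $I_j$ for all $i\le N-1$ (the orbit points at those times differ by bounded unipotents); this places $I_j$ in the bad set for the \emph{degraded} threshold $\rho/\sigma_1$, and iterating your recursion then degrades the threshold and inflates the prefactor geometrically in $N$. So the recursion in your item (iii), $\int_{Z_{N-1}}f_{\mathcal M}(g_{Nt}h_s\omega)\,ds<2a\int_{Z_{N-2}}f_{\mathcal M}(g_{(N-1)t}h_s\omega)\,ds$, is not established by the proposed slicing.

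The way the cited argument (in the spirit of \cite{KKLM} and \cite{AAEKMU}) avoids this is to run the induction from the \emph{coarsest} scale inward rather than the finest scale outward: partition $[-1,1]$ into $e^{2t}$ intervals $I$ of length $2e^{-2t}$ (here is where $e^t\in\mathbb N$ enters), and use the exact identity $g_{jt}h_{c_I+e^{-2t}u}\omega=g_{(j-1)t}h_u\bigl(g_th_{c_I}\omega\bigr)$, which converts the defining conditions of $Z_{N-1}$ for $j\ge 2$ into the conditions defining the analogous set for $N-2$ at the new base point $\omega_I=g_th_{c_I}\omega$ with the \emph{same} $\rho$ --- so the inductive hypothesis applies there with no loss. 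The $j=1$ condition is then used only through ``$I$ meets $Z_1$'', which by Theorem~\ref{thm:EMM 2.13}(3) gives $f_{\mathcal M}(\omega_I)\ge\rho/\sigma_1$, and it is this lower bound together with $\rho>C_1b'/a$ (with $C_1$ at least the comparability constant over a unit horocycle arc) that absorbs the $b'$ terms and the Riemann-sum discrepancy, the factor $2$ in $(2a)$ being exactly the room reserved for that absorption; one final application of Lemma~\ref{lem:2.13 for hor} at $\omega$ itself closes the estimate. If you want a self-contained proof you should reorganize your induction along these lines; the orientation you chose is the one that does not close.
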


By choosing $a<\min \{ \frac 1 2, \,  \frac 1 {2b'}\}$ we obtain:
\begin{cor} \label{cor:ath} (cf \cite[Theorem 1.1 (2)]{ath thesis}) There exists $\rho\in \mathbb{R}, \zeta<1$ so that for all $S$, $T$ large enough if $\omega \in \mathfrak{C}_\rho$
$$\text{\rm Leb}(\{s\in [-1,1] : \cup_{t\in [S,S+T]}\{g_t  h_s  \omega\} \cap  \mathfrak{C}_\rho=\emptyset\})<\zeta^T.$$
\end{cor}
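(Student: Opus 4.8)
\textbf{Proof proposal for Corollary~\ref{cor:ath}.}

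The plan is to deduce the corollary from Proposition~\ref{prop:cpct integer} by a discretization-and-union argument, with the continuous time window $[S,S+T]$ handled via property~(3) of Theorem~\ref{thm:EMM 2.13} (quasi-invariance of $f_{\mathcal{M}}$ under a neighborhood of the identity). First I would fix $a\in(0,1)$ small enough that $2a<1$ and $a<1/(2b')$, and choose $\rho>C_1 b'/a$ together with a time step $t\geq\bar t_0$ with $e^t\in\mathbb N$; this is exactly the setup required to invoke Proposition~\ref{prop:cpct integer}. The point of Proposition~\ref{prop:cpct integer} is that $\int_{Z_{N-1}} f_{\mathcal{M}}(g_{Nt}h_s\omega)\,ds$ decays like $(2a)^{N-1}$, and since $f_{\mathcal{M}}\geq 1$ everywhere this integral dominates $\mathrm{Leb}(Z_N)$; hence $\mathrm{Leb}(Z_N)\leq (2a)^{N}f_{\mathcal{M}}(\omega)+(2a)^{N-1}b'$, which for $\omega\in\mathfrak{C}_\rho$ (so $f_{\mathcal{M}}(\omega)\leq\rho$) gives $\mathrm{Leb}(Z_N)\leq C\,(2a)^{N}$ for a constant $C=C(\rho,b')$. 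Writing $\zeta_0:=2a<1$, this already says: the set of $s\in[-1,1]$ whose forward orbit under $\{g_{jt}h_s\omega : 1\le j\le N\}$ entirely misses $\mathfrak{C}_\rho$ has measure at most $C\zeta_0^{N}$.

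Next I would pass from the arithmetic progression of times $\{t,2t,\dots,Nt\}$ to a genuine continuous interval $[S,S+T]$. Using property~(3) of Theorem~\ref{thm:EMM 2.13}, enlarge the compact set slightly: if $g_{jt}h_s\omega\in\mathfrak{C}_{\rho/\sigma}$ for some $j$ then $g_\tau h_s\omega\in\mathfrak{C}_\rho$ for all $\tau$ within a fixed distance $\delta_0>0$ of $jt$ (here $V$ contains $g_\tau$ for $|\tau|<\delta_0$). So if we instead run the argument with the threshold $\rho':=\rho/\sigma$ and with step size $t$ chosen also to satisfy $t\le 2\delta_0$, then the events $\{g_{jt}h_s\omega\notin\mathfrak{C}_{\rho'}\ \forall j\in\{1,\dots,N\}\}$ and $\{\cup_{\tau\in[t,Nt]}g_\tau h_s\omega\cap\mathfrak{C}_\rho=\emptyset\}$ are comparable: the orbit-segment meets $\mathfrak{C}_\rho$ iff some sampled point lies in $\mathfrak{C}_{\rho'}$, up to boundary terms of the window. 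Taking $N=\lfloor T/t\rfloor$ and $S$ a multiple of $t$ (or absorbing the fractional part into the constants), we get $\mathrm{Leb}(\{s : \cup_{\tau\in[S,S+T]}g_\tau h_s\omega\cap\mathfrak{C}_\rho=\emptyset\})\le C\zeta_0^{N}\le C'\,\zeta^{T}$ with $\zeta:=\zeta_0^{1/t}<1$, for $S,T$ large enough; here one also uses that starting at time $S$ rather than time $0$ only changes $f_{\mathcal{M}}(\omega)$ to $f_{\mathcal{M}}(g_S h_s\omega)$, whose integral over $s\in[-1,1]$ is still bounded (using Lemma~\ref{lem:2.13 for hor} once more, together with $\omega\in\mathfrak{C}_\rho$) by a constant depending only on $\rho$.

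The main obstacle I anticipate is not any single inequality but the bookkeeping of constants: one must choose $\rho$ \emph{after} $a$ and the compact-set margin after $\rho$, so that all of Proposition~\ref{prop:cpct integer}'s hypotheses ($\rho>C_1 b'/a$, $a<\min\{1/2,1/(2b')\}$, $e^t\in\mathbb N$, $t\ge\bar t_0$, $t\le 2\delta_0$) are simultaneously met, and then verify that the "$\mathfrak{C}_\rho$ vs.\ $\mathfrak{C}_{\rho'}$" swap in the continuous-time step does not circularly re-enlarge $\rho$. Relatedly, the shift from base point $\omega$ to $g_S h_s\omega$ must be controlled uniformly in $s$, which is where one genuinely needs the horocycle version (Lemma~\ref{lem:2.13 for hor}) rather than just the circle version; this is routine but must be written with care. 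Everything else—$f_{\mathcal{M}}\ge 1\Rightarrow$ measure bounded by integral, geometric summation, absorbing $\lceil\cdot\rceil$'s—is elementary.
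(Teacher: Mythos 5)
Your overall route is the paper's own: Corollary~\ref{cor:ath} is deduced from Proposition~\ref{prop:cpct integer} by fixing $a<\min\{\tfrac12,\tfrac1{2b'}\}$, using $f_{\mathcal M}\ge 1$ to turn the integral bound over $Z_{N-1}$ into a measure bound, and summing the resulting geometric decay; the paper records nothing beyond this choice of $a$, so your proposal is a filling-in of the same argument rather than a different one.

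Two remarks on the filling-in. First, your discretization step is both misstated and unnecessary: since the sample times $jt\in[S,S+T]$ themselves lie on the orbit segment, the event that the segment misses $\mathfrak{C}_\rho$ is automatically contained in the event that every sample misses $\mathfrak{C}_\rho$ (or $\mathfrak{C}_{\rho/\sigma}$), with no appeal to Theorem~\ref{thm:EMM 2.13}(3); the ``iff'' you state fails in the direction you do not use, and the extra requirement $t\le 2\delta_0$ is hazardous, since it may be incompatible with $t\ge \bar t_0$ and $e^t\in\mathbb N$ --- it should simply be dropped. Second, the one place where real work is needed is the starting time $S$, and there your sketch is thinner than the issue deserves: Proposition~\ref{prop:cpct integer} imposes the miss-condition at \emph{all} times $t,2t,\dots,Nt$ from the base point, whereas the corollary excludes $\mathfrak{C}_\rho$ only on the window $[S,S+T]$, so one cannot quote the proposition verbatim ``with $f_{\mathcal M}(\omega)$ replaced by $f_{\mathcal M}(g_Sh_s\omega)$''. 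Since $g_Sh_s\omega=h_{e^{2S}s}\,g_S\omega$, the time-$S$ image of the arc is a horocycle arc of length $2e^{2S}$; the standard fix is to partition it into unit arcs, apply the proposition on each arc with its own base point, and control the normalized sum of the base-point values of $f_{\mathcal M}$ by $\int_{-1}^1 f_{\mathcal M}(g_Sh_s\omega)\,ds\le a f_{\mathcal M}(\omega)+b'$ (Lemma~\ref{lem:2.13 for hor} applied with $t=S$, using $\omega\in\mathfrak{C}_\rho$), up to a bounded factor from Theorem~\ref{thm:EMM 2.13}(3) along unit horocycle arcs. You invoke exactly the right tools for this, so the deficiency is one of execution rather than idea, but as written that step is asserted rather than proved.
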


\begin{prop}\label{prop:in good integer}\cite[Proposition 3.9]{AAEKMU} (cf  \cite[Theorem 1.5]{KKLM}) Let $\mathfrak{C}_\rho$ be as above for a function $f_{\mathcal M}$ as in Theorem \ref{thm:EMM 2.13}.  
Let us assume that $\omega \not \in \mathcal M$. For any $\delta,a \in (0,1)$ there exist $\rho_0>1$ and $t_0 >1$, depending only on $a$, such that for all  $\rho \geq  \rho_0$, all $t \geq t_0$ such that $e^t\in \mathbb N$,  and
 all $N\in \mathbb{N}$, the set
$$\{s\in [-1,1]: \frac 1 N\sum_{j=1}^{N}\chi_{\mathfrak{C}_\rho}(g_{jt}h_s\omega)<1- \delta\}$$
can be covered by at most $2^N C_1^N (2a)^{\delta N} e^{2tN} C(\omega)$ intervals of radius $e^{-2tN}$, where
$C(\omega)=\max\{ 1,f_{\mathcal M}(\omega)/\rho  \}$.
\end{prop}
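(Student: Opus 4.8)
The plan is to deduce Proposition~\ref{prop:in good integer} from the one-step exponential bound of Lemma~\ref{lem:2.13 for hor} by an entropy/counting argument over the discrete time set $\{t, 2t, \dots, Nt\}$, in the same spirit as the proof of the $(KKLM)$-type estimate but phrased along horocycle arcs under the renormalizing geodesic flow. The quantity one wants to be large is the number of indices $j\in\{1,\dots,N\}$ for which $g_{jt}h_s\omega$ lands in the compact set $\mathfrak{C}_\rho=\{f_{\mathcal M}\le\rho\}$; so the bad set is the set of $s\in[-1,1]$ for which at least $\delta N$ of the times fall outside $\mathfrak{C}_\rho$. I would organize the bad set by the \emph{pattern} $e=(e_1,\dots,e_N)\in\{0,1\}^N$ recording, for a given $s$, whether $g_{jt}h_s\omega\notin\mathfrak{C}_\rho$ ($e_j=1$) or not; the bad set is the union over patterns with $|e|\ge\delta N$ of the corresponding ``itinerary cells'', and there are at most $2^N$ patterns.

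The core step is the inductive estimate controlling, for each fixed pattern $e$, the integral $\int_{E_e} f_{\mathcal M}(g_{Nt}h_s\omega)\,ds$ over the cell $E_e$ of points realizing the first $N$ symbols of $e$, exactly as in Proposition~\ref{prop:cpct integer} but keeping track of how many times the symbol was $1$: each step at which $e_j=1$ contributes a genuine contraction factor from Lemma~\ref{lem:2.13 for hor} (since $g_{(j-1)t}h_s\omega\notin\mathfrak{C}_\rho$ forces us into the regime where $\int f\le a f + b'$, and the geodesic expansion $e^{2t}$ rescaling the horocycle is what turns the single-step estimate into one usable after subdivision into $e^{2t}\in\mathbb N$ subintervals of radius $e^{-2(j)t}$), while each step with $e_j=0$ costs only a bounded multiplicative factor $C_1$ coming from the distortion property (3) of Theorem~\ref{thm:EMM 2.13} and the fact that $f_{\mathcal M}\ge 1$. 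Running this for $N$ steps along a pattern $e$ with $|e|=m$ yields a bound of the shape $\int_{E_e} f\,ds \lesssim C_1^{N}(2a)^{m} f_{\mathcal M}(\omega) + (\text{lower order in }\rho)$; since $f_{\mathcal M}\ge 1$, the number of radius-$e^{-2tN}$ subintervals of $E_e$ needed is at most $e^{2tN}$ times this integral, i.e. at most $C_1^N (2a)^m e^{2tN}\,C(\omega)$ with $C(\omega)=\max\{1,\alpha(\omega)/\rho\}$ absorbing the additive term once $\rho\ge\rho_0$ and $t\ge t_0$ are large (depending on $a$). Summing over the at most $2^N$ patterns with $m\ge\delta N$, and using that $(2a)^m\le(2a)^{\delta N}$ when $2a<1$, gives the claimed covering by $2^N C_1^N (2a)^{\delta N} e^{2tN} C(\omega)$ intervals of radius $e^{-2tN}$.

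Concretely I would proceed as follows. First, fix $a\in(0,1)$ with $2a<1$ and invoke Lemma~\ref{lem:2.13 for hor} to get $b'$ and $\bar t_0(a)$; choose $t\ge\max\{\bar t_0,t_0\}$ with $e^t\in\mathbb N$ and $\rho\ge\rho_0:=C_1 b'/a$ (the same threshold as in Proposition~\ref{prop:cpct integer}). Second, set up the itinerary decomposition: partition $[-1,1]$ into $e^{2t}$ arcs of length $2e^{-2t}$, note that on each $g_t$ maps the arc to a full horocycle arc of unit-ish length at $g_th_{s_0}\omega$ up to the bounded distortion of property (3), and iterate, so after $j$ steps one has a decomposition into $e^{2jt}$ arcs of radius $e^{-2jt}$ indexed by length-$j$ symbol strings. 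Third, prove the inductive integral bound along a fixed string by conditioning on whether the current surface is in $\mathfrak{C}_\rho$: outside, apply the contraction $\int f(g_th_s\cdot)ds\le a f(\cdot)+b'\le 2a f(\cdot)$ (absorbing $b'$ using $f\ge\rho/\rho\cdot\dots$, i.e. $b'\le a f$ since $f\ge\rho\ge C_1b'/a\ge b'/a$ on... wait, on the complement of $\mathfrak{C}_\rho$ we have $f>\rho\ge b'/a$, so $b'< a f$, giving $\int f\le 2af$); inside, bound $\int f(g_th_s\cdot)ds\le C_1 \sup f$ over that cell using property (3) across the finitely many $g_t$-pieces and compactness. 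Fourth, collect the factors, convert the integral bound into an interval-count bound via $f\ge1$ and Markov, and sum over patterns. The main obstacle I anticipate is bookkeeping the geodesic rescaling cleanly — making sure that after $N$ steps the relevant scale is uniformly $e^{-2tN}$ and that the distortion constants from property (3) of Theorem~\ref{thm:EMM 2.13}, applied once per ``good'' step, genuinely multiply to $C_1^{N}$ with $C_1$ independent of $\omega$, $a$, and $N$; this is essentially the technical heart of \cite[Prop.\ 3.9]{AAEKMU} and of the $(KKLM)$ argument, and the rest is a matter of carefully matching constants with Proposition~\ref{prop:cpct integer} and Corollary~\ref{cor:ath}.
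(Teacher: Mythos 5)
You have the right genre of argument: the paper itself gives no proof (the proposition is imported verbatim from \cite{AAEKMU}, cf.\ \cite{KKLM}), and the expected proof is indeed an itinerary decomposition into at most $2^N$ patterns, an iterated use of the Margulis-type inequality of Lemma~\ref{lem:2.13 for hor} along nested horocycle cells rescaled by $g_t$, and a Chebyshev step converting integral bounds into cell counts. But two of your key steps would fail as written. First, your bound at the ``good'' steps, $\int f_{\mathcal M}(g_th_s\cdot)\,ds\le C_1\sup f_{\mathcal M}$ ``using property (3) and compactness'', is not available: property (3) of Theorem~\ref{thm:EMM 2.13} controls $f_{\mathcal M}$ only under group elements in a fixed neighborhood $V$ of the identity, so iterating it along $g_t$ costs a factor exponential in $t$, and a compactness bound on $g_t\mathfrak{C}_\rho$ depends on $t$ and $\rho$; neither yields the $t$-, $\rho$-, $a$-independent constant $C_1$ the statement demands. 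The correct mechanism at a good step is either Lemma~\ref{lem:2.13 for hor} itself combined with $f_{\mathcal M}\ge 1$ (so $b'\le b'f_{\mathcal M}$, giving a constant factor $a+b'$), or the cap $f_{\mathcal M}\le\rho$ on good cells together with a separate count of surviving cells.

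Second, even with that repair the induction does not give your per-pattern bound $\int_{E_e}f_{\mathcal M}(g_{Nt}h_s\omega)\,ds\lesssim C_1^N(2a)^{|e|}f_{\mathcal M}(\omega)$ plus lower-order terms: the contraction factor $a$ is earned only at transitions that \emph{start} at a time outside $\mathfrak{C}_\rho$ (that is where $b'\le af_{\mathcal M}$ is available), so the exponent is the number of bad times among $\{1,\dots,N-1\}$; for the pattern (good at time $1$, bad at time $2$) the honest bound is of order $(a+b')^2f_{\mathcal M}(\omega)$, with no factor of $a$, and the cell $E_e$ can genuinely carry $f_{\mathcal M}$-mass of order $b'$ even when $f_{\mathcal M}(\omega)=O(1)$. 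More importantly, your final conversion (``$e^{2tN}$ times the integral, with $C(\omega)$ absorbing the additive term'') does not produce the stated constant: Markov at threshold $1$ yields a covering count with $f_{\mathcal M}(\omega)$ on the right, and $C(\omega)=\max\{1,f_{\mathcal M}(\omega)/\rho\}$ is \emph{smaller} than $f_{\mathcal M}(\omega)$, so nothing is being absorbed. To gain the $1/\rho$ one must apply Chebyshev at threshold $\rho$ at a bad time (say the last one in the pattern) and afterwards only multiply the number of surviving cells by $e^{2t}$ per step; that is, one must run a joint bookkeeping of the total $f_{\mathcal M}$-mass over surviving cells and of their number, which is the actual heart of the argument in \cite{AAEKMU} and \cite{KKLM}. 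What your outline proves, once the good-step bound is fixed, is the weaker covering estimate with $f_{\mathcal M}(\omega)$ in place of $C(\omega)$ (and a possible loss of one factor of $a$); that weaker form would still suffice for the way the proposition is used in Corollary~\ref{cor:all together}, but it is not the statement as given.
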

Note that, if  $a<\frac 1 {2 (2C_1)^{\frac 1 \delta}}$, the measure of the set in the statement of Prop. \ref{prop:in good integer} decays exponentially in $N$.

\begin{cor} \label{cor:all together} Let $U$ be an open set whose boundary has measure zero.  For all $\epsilon>0$ there exist numbers $\xi\in (0,1)$, $t_0>0$, $N_0\in \mathbb{N}$ and a set  $V_\epsilon := V_\epsilon(U) \subset \supp (\nu)$, open in 
$\supp (\nu)$,  and $T_2\in \mathbb{R}^+$  so that for any $T>T_2$:
\begin{enumerate}[label=(\roman*)]
\item $\nu(V_\epsilon)>1-\epsilon;$
\item for $\nu$ almost every $\omega$ we have $\underset{S \to \infty}{\liminf}\,  \frac{|\{ 0\leq t\leq S: g_t\omega\in V_\epsilon\}|}{S}>1-\epsilon;$
\item if $\omega \in V_\epsilon$   we have $|\frac 1 {2T} \int_{-1}^1\int_0^T \chi_U(g_th_s\omega)dtds-\nu(U)|<\epsilon;$
\item\label{c:large dev} for all $t\geq t_0$, $N\geq N_0$ and $\omega  \in V_{\epsilon}$ we have 
$$\big|\{s \in [-1,1]:\frac 1 N\sum_{j=1}^{N}\chi_{V_\epsilon}(g_{jt}h_s\omega)<1- \epsilon \}\big|<\xi^N\,.$$
\end{enumerate}
\end{cor}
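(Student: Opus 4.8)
The plan is to construct $V_\epsilon$ as a finite intersection of the various "good" sets furnished by the preceding results, after first reducing all the constants to a common scale. First I would apply Theorem \ref{thm:EMM 2.7} with the open set $U$: for a suitable $\epsilon' < \epsilon$ (to be fixed at the end), it produces finitely many invariant manifolds $\mathcal Z_1, \dots, \mathcal Z_n$ such that any compact $\mathcal C \subset \supp(\nu) \setminus \cup \mathcal Z_i$ admits a threshold $T_1 = T_1(\mathcal C)$ beyond which the horocycle--geodesic average of $\chi_U$ exceeds $\nu(U) - \epsilon'$; this will give item (iii) once $V_\epsilon$ is chosen inside such a $\mathcal C$. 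In parallel, I would invoke Proposition \ref{prop:in good integer} (equivalently Corollary \ref{cor:ath}) for a function $f_{\mathcal M}$ associated to an affine invariant submanifold containing the exceptional manifolds $\mathcal Z_i$ (or simply take $\mathcal M$ to be the relevant one so that $\mathfrak C_\rho$ avoids them), fixing $a < \tfrac{1}{2(2C_1)^{1/\delta}}$ with $\delta := \epsilon'$, so that the measure of the bad set in that proposition decays like $\xi^N$ for some $\xi \in (0,1)$; this is the mechanism behind item \ref{c:large dev}.

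The next step is the bootstrapping that makes $V_\epsilon$ \emph{self-referential}: item \ref{c:large dev} asks that the Birkhoff-type sums of $\chi_{V_\epsilon}$ along the pushed horocycle be large, so I cannot simply take $V_\epsilon = \mathfrak C_\rho \cap \mathcal C$. Instead I would choose $\rho$ large enough (using property (1) of Theorem \ref{thm:EMM 2.13}, so $\mathfrak C_\rho$ is compact and disjoint from $\mathcal M \supset \cup \mathcal Z_i$, and using $\nu(\mathfrak C_\rho) \to 1$ as $\rho \to \infty$ since $f_{\mathcal M}$ is $\nu$-integrable — this integrability is itself a standard consequence of part (2) of Theorem \ref{thm:EMM 2.13}) so that $\nu(\mathfrak C_\rho) > 1 - \epsilon'$, and then set $V_\epsilon := \mathrm{int}_{\supp\nu}(\mathfrak C_\rho) \cap \mathcal C$ or a slightly shrunk open neighborhood; then Proposition \ref{prop:in good integer} applied with $\mathfrak C_\rho$ in the role of its compact set, combined with $V_\epsilon \subset \mathfrak C_\rho$, yields \ref{c:large dev} directly (the $\chi_{V_\epsilon} \geq \chi_{\mathfrak C_\rho}$ inequality goes the wrong way, so the honest route is to prove \ref{c:large dev} with $\mathfrak C_\rho$ first and then note $V_\epsilon$ and $\mathfrak C_\rho$ differ by a $\nu$-null boundary and arrange the open set to sit between a slightly smaller and slightly larger sublevel set, absorbing the discrepancy into $\epsilon$). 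Item (ii) is then the Birkhoff ergodic theorem applied to $\chi_{V_\epsilon}$ under the geodesic flow $g_t$ (ergodic for $\nu$), using $\nu(V_\epsilon) > 1 - \epsilon$, which also gives (i).

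The main obstacle I anticipate is exactly this circularity in clause \ref{c:large dev}: the set $V_\epsilon$ must control its own return statistics, so the constants $\rho, \xi, t_0, N_0, a, \delta$ have to be selected in the right order — first $\delta = \epsilon'$, then $a$ small in terms of $C_1$ and $\delta$ (giving $\xi$), then $\rho \geq \rho_0(a)$ large enough for both the compactness/disjointness from the $\mathcal Z_i$ and the measure bound $\nu(\mathfrak C_\rho) > 1-\epsilon'$, then $t_0$ from Proposition \ref{prop:in good integer} and Theorem \ref{thm:EMM 2.7} (taking the max), then $T_2 \geq T_1(\mathfrak C_\rho)$ — and one must check that enlarging $\rho$ to fix the measure does not spoil the $a$-dependent bound (it does not, since $\rho_0$ depends only on $a$ and the bound holds for all $\rho \geq \rho_0$). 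A secondary technical point is passing from $\mathfrak C_\rho$ (closed) to an open $V_\epsilon$ in $\supp\nu$ without losing any of (i)--(iv); since $\partial U$ and $\partial \mathfrak C_\rho$ are $\nu$-null (the latter because $f_{\mathcal M}$ is continuous and $\nu$-a.e. finite, so level sets are null for all but countably many $\rho$, and we pick a good $\rho$), a standard inner-regularity argument lets us choose $V_\epsilon$ open with all four properties, at the cost of replacing $\epsilon'$ by a slightly larger quantity still below $\epsilon$. I would end by collecting $\xi$, $t_0$, $N_0$, $V_\epsilon$, $T_2$ and verifying (i)--(iv) line by line.
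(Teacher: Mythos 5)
Your overall route is the paper's: Theorem \ref{thm:EMM 2.7} for clause (iii), sublevel sets $\mathfrak{C}_\rho$ of the functions from Theorem \ref{thm:EMM 2.13} combined with Proposition \ref{prop:in good integer} for clause (iv), a ``sandwich'' between a smaller and a larger sublevel set to break the self-reference in (iv), Birkhoff genericity for (ii), and the measure of the sublevel sets for (i); your ordering of the constants ($\delta$, then $a$, then $\rho\geq\rho_0(a)$, then $t_0$, then $T_2$) is also the correct one.

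Two points, however, would not survive as written. First, Theorem \ref{thm:EMM 2.13} attaches a function to a \emph{single} affine invariant submanifold $\mathcal{M}$, and in general there is no proper affine invariant submanifold containing all of the exceptional manifolds $\mathcal{Z}_1,\dots,\mathcal{Z}_n$ produced by Theorem \ref{thm:EMM 2.7}: their union is not affine, and the smallest affine manifold containing them may be $\supp(\nu)$ itself, on which $f_{\mathcal{M}}\equiv\infty$, which destroys the construction. The paper's remedy, which you need whenever $n\geq 2$, is to take one function $f_i$ per $\mathcal{Z}_i$, apply Proposition \ref{prop:in good integer} to each with $\delta<\epsilon/n$, and intersect the sublevel sets, combining the $n$ exponential bounds into a single $\xi\in(0,1)$; this is routine but not optional. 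Second, your fallback that $V_\epsilon$ and $\mathfrak{C}_\rho$ ``differ by a $\nu$-null boundary'' cannot by itself transfer clause (iv): that clause concerns visits of the specific points $g_{jt}h_s\omega$ along individual horocycle arcs, not a $\nu$-a.e.\ statement, so a $\nu$-null discrepancy set is not negligible there. The valid mechanism is the other half of your sentence, made precise by item (3) of Theorem \ref{thm:EMM 2.13}: if $f_i(\omega)\leq\rho_i$ then a fixed neighborhood of $\omega$ lies in $\{f_i\leq\sigma\rho_i\}$, so $\cap_i\mathfrak{C}^{(i)}_{\rho_i}\cap\supp(\nu)$ is contained in the interior $V_\epsilon$ of $\cap_i\mathfrak{C}^{(i)}_{\sigma\rho_i}\cap\supp(\nu)$; one proves the large-deviation bound for $\chi_{\cap_i\mathfrak{C}^{(i)}_{\rho_i}}$ with base points ranging in the larger set (where the factor $C(\omega)$ in Proposition \ref{prop:in good integer} is uniformly bounded) and then uses $\chi_{V_\epsilon}\geq\chi_{\cap_i\mathfrak{C}^{(i)}_{\rho_i}}$ on $\supp(\nu)$. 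With these two repairs your argument coincides with the paper's proof.
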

This uses the following standard definition and result:
\begin{defin} Let $\omega \in \mathcal{H}$ and $\nu$ be the unique, $SL(2,\mathbb{R})$-invariant and ergodic probability measure with $supp(\nu)=\overline{SL(2,\mathbb{R})\omega}$. We say $\omega$ is \emph{Birkhoff generic} if 
$$\underset{T \to \infty}{\lim}\, \frac 1 T\int_0^T\phi(g_t\omega) dt=\int_{\mathcal{H}}\phi d\nu$$
for all $\phi \in C_c(\mathcal{H})$.
\end{defin}
\begin{lem}\label{lem:gen set}
Let $S$ be a set so that $\nu(\partial S)=0$. If $\omega$ is Birkhoff generic then 
$$\underset{T \to \infty}{\lim}\, \frac 1 T\int_0^T\chi_S(g_t\omega) dt=\nu(S).$$
If $U$ is an open set and $\omega$ is Birkhoff generic
$$\underset{T \to \infty}{\liminf} \, \frac 1 T\int_0^T\chi_U(g_t\omega) dt\geq \nu(U).$$
\end{lem}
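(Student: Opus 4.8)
The plan is to prove the two assertions in the reverse of the order stated: first the $\liminf$ bound for open sets, and then deduce from it the limit formula for sets with null boundary. The only inputs are the definition of Birkhoff genericity (convergence of the time averages against test functions in $C_c(\mathcal H)$), the fact that $\nu$, being a Borel probability measure on the locally compact, second countable, Hausdorff space $\mathcal H$, is a Radon measure (hence inner regular by compact sets and outer regular by open sets), and the locally compact form of Urysohn's lemma. This is the classical portmanteau argument, so no step is genuinely hard; the one point requiring care is that $\mathcal H$ is noncompact, so the approximating functions must be produced in $C_c(\mathcal H)$, not merely in $C_b(\mathcal H)$.

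\emph{Open sets.} Fix an open $U\subset\mathcal H$ and $\epsilon>0$. First I would use inner regularity of $\nu$ to choose a compact $K\subset U$ with $\nu(K)>\nu(U)-\epsilon$; by local compactness choose an open $W$ with $K\subset W\subset\overline W\subset U$ and $\overline W$ compact, and by Urysohn's lemma a function $\phi\in C_c(\mathcal H)$ with $0\le\phi\le 1$, $\phi\equiv 1$ on $K$, and $\supp\phi\subset\overline W\subset U$. Then $\chi_U\ge\phi$ pointwise, so for every $T$
$$
\frac1T\int_0^T\chi_U(g_t\omega)\,dt\ \ge\ \frac1T\int_0^T\phi(g_t\omega)\,dt\,,
$$
and since $\omega$ is Birkhoff generic the right-hand side converges to $\int_{\mathcal H}\phi\,d\nu\ge\nu(K)>\nu(U)-\epsilon$. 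Taking $\liminf_{T\to\infty}$ and then letting $\epsilon\to 0$ yields the second assertion.

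\emph{Sets with null boundary.} Let $S$ satisfy $\nu(\partial S)=0$, so $\nu(\mathrm{int}\,S)=\nu(S)=\nu(\overline S)$. Since $\chi_S\ge\chi_{\mathrm{int}\,S}$ and $\mathrm{int}\,S$ is open, the bound just proved gives
$$
\liminf_{T\to\infty}\frac1T\int_0^T\chi_S(g_t\omega)\,dt\ \ge\ \nu(\mathrm{int}\,S)\ =\ \nu(S)\,.
$$
For the reverse inequality, set $U'=\mathcal H\setminus\overline S$, which is open, so $\chi_S\le\chi_{\overline S}=1-\chi_{U'}$ and hence
$$
\limsup_{T\to\infty}\frac1T\int_0^T\chi_S(g_t\omega)\,dt\ \le\ 1-\liminf_{T\to\infty}\frac1T\int_0^T\chi_{U'}(g_t\omega)\,dt\ \le\ 1-\nu(U')\ =\ \nu(\overline S)\ =\ \nu(S)\,.
$$
Combining the two displays shows the limit exists and equals $\nu(S)$. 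The main (and essentially only) obstacle to watch for is the regularity/local compactness input needed to approximate indicators of open and closed sets from the appropriate side by functions in $C_c(\mathcal H)$; once that is in place, everything reduces to monotonicity of the integral and the defining property of Birkhoff genericity.
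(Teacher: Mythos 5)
Your proposal is correct. The paper states Lemma \ref{lem:gen set} without proof, treating it as a standard fact, and your argument --- inner regularity of the Radon probability measure $\nu$, Urysohn functions in $C_c(\mathcal{H})$ squeezed under $\chi_U$ to get the $\liminf$ bound for open sets, and then the complementation $\chi_S \le \chi_{\overline S} = 1-\chi_{\mathcal H\setminus\overline S}$ together with $\nu(\partial S)=0$ to pin down the limit --- is precisely the standard portmanteau-type proof the paper implicitly relies on.
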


\begin{proof} [Proof of Corollary \ref{cor:all together}]  First, (ii)  follows free of charge by the fact that $V_\epsilon$ is open and Lemma \ref{lem:gen set}.
 Given an open set $U$ and $\epsilon>0$, we then construct an open set $V_\epsilon$ with properties (i), (iii) and (iv). 
 
 For the given $U$ and $\epsilon>0$, we apply Theorem \ref{thm:EMM 2.7} to obtain a finite number of manifolds $\mathcal{Z}_1,\dots,\mathcal{Z}_n$. 
For each of these manifolds we build a function $f_i$ as in Theorem \ref{thm:EMM 2.13} and let $\mathfrak{C}^{(i)}_s=\{\omega:f_i(\omega) \leq s\}$.

Applying Proposition \ref{prop:in good integer} to each $f_i$ with $\delta<\frac \epsilon n$ and $a<\frac 1 {2 (2C_1)^{\frac 1 \delta}}$ (where $C_1$ is as in the proposition),  there exists $\rho_i$, $t_i$ 
as in the proposition, and so that $\nu(\cap \mathfrak{C}_{\rho_i}^{(i)})>1-  \epsilon $. Following the sentence after the proposition,   for any $\sigma>1$ there exist constants $\zeta_1,\dots,\zeta_n<1$ and natural numbers $N_1,\dots,N_n$ so that for each $i\in \{1,\dots,n\}$,  for all  $t\geq t_i$,  for all  $\omega \in \mathfrak{C}_{\sigma\rho_i}^{(i)}$ and $N\geq N_i$,
$$\big|\{s \in [-1,1]:\frac 1 N\sum_{j=1}^{N}\chi_{\mathfrak{C}_{\rho_i}}(g_{jt}h_s\omega)<1-  \frac \epsilon n   \}\big|<\zeta_i^N .$$ Now, letting $\xi>\max\{\zeta_i\}$, there exist $N_0, t_0$ so that,  if $\omega \in  \mathfrak{C}_{\sigma\rho_i}^{(i)}$ for all $i \in \{1, \dots, n\}$, then for all $N\geq N_0$ and $t\geq t_0$  we have 
$$\big|\{s \in [-1,1]:\frac 1 N\sum_{j=1}^{N}\chi_{\cap \mathfrak{C}_{\rho_i}}(g_{jt}h_s\omega)<1- \epsilon \}\big|<\xi^N\,.$$
 We define the set $V_\epsilon$ to be the interior in $\supp(\nu)$  of the compact set $\mathcal C=\supp(\nu) \cap  \cap_{i=1}^n \mathfrak{C}_{\sigma\rho_i}^{(i)}$, so
that property (iv) holds by the above argument.
By applying Theorem \ref{thm:EMM 2.7} to the compact set $$\mathcal C := \supp(\nu)  \cap  \bigcap_{i=1}^n\mathfrak{C}_{\sigma\rho_i}^{(i)}$$ 
and the given $\epsilon>0$, we conclude that there exists $T_2>0$ such that property (iii) holds for all $T>T_2$.  Finally,  it follows from the definitions that $\nu (V_\epsilon) \geq \nu(\cap \mathfrak{C}_{\rho_i}^{(i)})>1- \epsilon $, hence property (i) is also proved.
\end{proof}
We will of course be interested in $V_\epsilon$ chosen for $U=\hat{U}$ and this is what $\hat V_\epsilon$ denotes for the remainder of the paper.
We further assume that $\hat V_\epsilon$ is contained in a fixed compact set.  

\section{ Some preparatory results}\label{sec:lip and stuff}
\subsection{Making curves grow}
\label{subsec:curve_grow}

The main result of the next section is Lemma \ref{lem:Zgrowth from V}, which applies Proposition \ref{prop:growth mechanism} to say that the geodesic flow image of  (projectively) Lipschitz horocycle sections with small enough (projective) Lipschitz constants in a subbundle as in Proposition \ref{prop:growth mechanism} typically grow. It also collects some results which say that the geodesic flow image of horocycle sections have their Lipschitz (and usually their projective Lipschitz) constants improve.

\begin{defin}
A  horocycle section $\phi: [a,b] \to H^1(M,\mathbb{R})$ at $\omega$ (that is, a map $\phi: [a,b] \to H^1(M,\mathbb{R})$ such that $\phi(s) \in H_{h_s\omega}(M, \mathbb R)$ for all $s\in [a,b]$)  is 
$K$-Lipschitz
at $\omega$ if the curve $\phi_{\omega}: [a,b] \to H^1_{\omega}(M, \mathbb R)$, obtained by parallel transport along the horocycle,  (that is, such that $\phi_\omega(s)=\pi_\omega(s) \circ \phi(s)$ for all $s\in [a,b]$) is Lipschitz with respect to the Hodge norm on $H^1_\omega(M, \mathbb R)$ with Lipschitz constant $K>0$.
\end{defin}

\begin{lem}\label{lem:stay lip}
If $\phi:[a,b] \to H^1(M,\mathbb R)$ is a $K$-Lipschitz horocycle section at $\omega$, then $KZ(g_t)(\phi):[e^{2t}a,e^{2t}b] \to H^1(M, \mathbb R)$ given by $$KZ(g_t)(\phi)(s)=KZ(g_t,h_{se^{-2t}}\omega) \phi(se^{-2t})$$ is a $Ke^{-t}$-Lipschitz horocycle section
at $g_t\omega$.
\end{lem}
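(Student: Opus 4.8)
The plan is to compute directly with the definition of Lipschitz horocycle section, using the commutation relation between the Kontsevich--Zorich cocycle and the parallel transport maps $\pi_\omega(s)$ recalled in Section~\ref{sec:background}, together with the sublinear-type contraction estimate of Lemma~\ref{lem:KZbound} for the geodesic element $g_{-t}$ acting on the Hodge norm. First I would fix $\psi := KZ(g_t)(\phi)$ and unwind what needs to be shown: $\psi$ is a horocycle section at $g_t\omega$ on the interval $[e^{2t}a, e^{2t}b]$ (this is immediate since $g_t h_{se^{-2t}}\omega = h_s g_t\omega$, so $\psi(s) \in H^1_{h_s g_t\omega}(M,\mathbb R)$), and its parallel transport $\psi_{g_t\omega}: [e^{2t}a,e^{2t}b] \to H^1_{g_t\omega}(M,\mathbb R)$ is $Ke^{-t}$-Lipschitz.

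The key step is to identify $\psi_{g_t\omega}$ explicitly. Using the identity displayed in the background section, namely $\mathrm{KZ}(g_t,h_{e^{-2t}s}\omega)\phi(e^{-2t}s) = \pi_{g_t\omega}^{-1}(s)\, \mathrm{KZ}(g_t,\omega)\phi_\omega(e^{-2t}s)$, we get $\psi(s) = \pi_{g_t\omega}^{-1}(s)\,\mathrm{KZ}(g_t,\omega)\phi_\omega(e^{-2t}s)$, hence
$$
\psi_{g_t\omega}(s) = \pi_{g_t\omega}(s)\,\psi(s) = \mathrm{KZ}(g_t,\omega)\,\phi_\omega(e^{-2t}s)\,.
$$
So $\psi_{g_t\omega}$ is the composition of the fixed linear map $\mathrm{KZ}(g_t,\omega): H^1_\omega(M,\mathbb R) \to H^1_{g_t\omega}(M,\mathbb R)$ with the reparametrized curve $s \mapsto \phi_\omega(e^{-2t}s)$. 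Now for $s, s' \in [e^{2t}a, e^{2t}b]$,
$$
|\psi_{g_t\omega}(s) - \psi_{g_t\omega}(s')| = |\mathrm{KZ}(g_t,\omega)\big(\phi_\omega(e^{-2t}s) - \phi_\omega(e^{-2t}s')\big)| \leq \|\mathrm{KZ}(g_t,\omega)\| \cdot K \cdot e^{-2t}|s - s'|\,,
$$
using that $\phi_\omega$ is $K$-Lipschitz. By Lemma~\ref{lem:KZbound}, $\|\mathrm{KZ}(g_t,\omega)\| \leq e^{\mathrm{dist}(g_t,Id)} = e^{t}$, so the right-hand side is at most $K e^{t} e^{-2t}|s-s'| = Ke^{-t}|s-s'|$, which is the claim.

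I do not expect a genuine obstacle here — the lemma is essentially a bookkeeping consequence of the commutation relation plus the norm bound, and the only thing to be careful about is the factor $e^{-2t}$ coming from the reparametrization of the horocycle interval (which is what makes the $e^t$ growth of $\mathrm{KZ}(g_t,\omega)$ harmless and even produces net contraction). If one prefers to avoid invoking the precomputed identity from the background section, the same computation can be done by hand: expand $\psi_{g_t\omega}(s) = \pi_{g_t\omega}(s)\,\mathrm{KZ}(g_t, h_{e^{-2t}s}\omega)\,\phi(e^{-2t}s)$ and apply the commutation relation $\pi_{g_t\omega}(e^{2t}\sigma)\circ \mathrm{KZ}(g_t, h_\sigma\omega) = \mathrm{KZ}(g_t,\omega)\circ \pi_\omega(\sigma)$ with $\sigma = e^{-2t}s$, which again yields $\psi_{g_t\omega}(s) = \mathrm{KZ}(g_t,\omega)\phi_\omega(e^{-2t}s)$.
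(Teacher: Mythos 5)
Your proposal is correct and follows essentially the same route as the paper's proof: identify the parallel-transported curve as $KZ(g_t,\omega)\phi_\omega(e^{-2t}s)$ via the commutation relation, then combine the bound $\|KZ(g_t,\omega)\|\leq e^{t}$ from Lemma~\ref{lem:KZbound} with the $e^{-2t}$ reparametrization factor to obtain the $Ke^{-t}$-Lipschitz constant. The extra care you take in writing out the Lipschitz estimate explicitly is just a more detailed version of the same argument.
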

\begin{proof}
By definition $\phi:[a,b] \to H^1(M,\mathbb R)$ is a $K$-Lipschitz horocycle section at $\omega$ if the parallel transport $\phi_\omega: I \to H^1_\omega(M,\mathbb R)$ is a $K$-Lipschitz map with respect to the Hodge norm. We recall that the map 
$\phi_\omega$
is defined, for all $s\in I$,  as a composition $\phi_\omega (s) = \pi_\omega(s) \circ \phi(s)$ with the parallel transport $\pi_\omega (s): H^1_{h_s\omega}(M,\mathbb R) \to H^1_\omega(M,\mathbb R)$. 

By the commutativity of parallel transport and the KZ cocycle, we have
$$
\pi_{g_t\omega} (s) KZ(g_t,h_{se^{-2t}}\omega)\phi(se^{-2t}) = KZ(g_t,\omega)\phi_\omega (se^{-2t})\,,
$$
hence by Lemma \ref{lem:KZbound}  the map defined as $KZ(g_t,\phi)_{g_t\omega}(s) = KZ(g_t,\omega)
\phi_\omega (se^{-2t})$ is 
$Ke^{-t}$-Lipschitz.

\end{proof} 

\begin{defin} A horocycle section $\phi: [a,b] \to H^1(M,\mathbb{R})$ at $\omega$ is projectively 
$\kappa$-Lipschitz at $\omega$ if there exists $K >0$ such that the curve $\phi_{\omega} : [a,b] \to H^1_\omega(M, \mathbb R)$ is  $K$-Lipschitz at $\omega$ and the Hodge norm $\Vert \phi_\omega(s) \Vert_\omega$ is bounded below by $K/\kappa >0$, for all $s\in [a,b]$.  \end{defin}

\begin{rem} \label{rem:Lipschitz_1}  Using Lemma \ref{lem:KZbound} to bound the change of Hodge norm  from above and below under parallel transport along a horocycle, we obtain that there exists $C_{a,b}$, depending only on $b-a$ so that for any section $\phi: [a,b] \to H^1(M,\mathbb{R})$ projectively $\kappa$-Lipschitz  at $\omega$,  and for any $s \in [a,b]$, the  section $\phi$ is also 
projectively $C_{a,b} \kappa$-Lipschitz at $h_{s}\omega$.
\end{rem} 

\begin{rem}  \label{rem:Lipschitz_2}  For any $K>0$, every $K$-Lipschitz horocycle section $\phi: [a,b] \to H^1(M,\mathbb{R})$ at $\omega$, is projectively $\kappa$-Lipschitz at $\omega$ if $ \max_{s\in [a,b]}  \Vert \phi_\omega (s) \Vert_\omega  
> K (b-a)$ with 
$$
\kappa :=  K / \big(\max_{s\in [a,b]}  \Vert \phi_\omega (s) \Vert_\omega - K(b-a) \big)\,.
$$
\end{rem} 
\begin{lem}\label{lem:stay_proj_lip}
If $\phi:[a,b] \to H^1(M,\mathbb R)$ is a projectively $\kappa$-Lipschitz horocycle section at $\omega$, then 
$KZ(g_t)(\phi):[e^{2t}a,e^{2t}b] \to H^1(M, \mathbb R)$ given by $KZ(g_t)(\phi)(s)=KZ(g_t,h_{se^{-2t}}\omega) \phi(se^{-2t})$  is a projectively $\kappa$-Lipschitz horocycle section at $g_t\omega$. If in addition $\phi:[a,b] \to 
\mathcal F \subset \FF$,  then  for any compact set $\mathcal K$ there exists $\lambda_{\mathcal F, \mathcal K}\in [0,1)$ such that the following holds. For any $\omega$ and for any $t>0$, let 
$$
v_{\mathcal K} (\omega, t) := \vert \{ t' \in [0,t] :   g_{t'} \omega \in \mathcal K\}\vert.
$$
Then for all $t>0$ the horocycle section $KZ(g_t)(\phi): [e^{2t}a, e^{2t}b] \to \FF$ is  projectively $\kappa_t$-Lipschitz at $g_t\omega$, where 
$$
\kappa_t \leq    \kappa \exp (-(1-\lambda_{\mathcal F,\mathcal K})  v_{\mathcal K} (\omega, t))\,. 
$$
\end{lem}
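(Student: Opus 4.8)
The plan is to prove the statement in two parts, exactly mirroring the structure of Lemma~\ref{lem:stay_proj_lip}'s statement: first the ``plain'' projective Lipschitz persistence under $g_t$ with no compact-set improvement, and then the quantitative improvement $\kappa_t \leq \kappa \exp(-(1-\lambda_{\mathcal F,\mathcal K}) v_{\mathcal K}(\omega,t))$ for sections valued in $\FF$. For the first part I would argue as follows. By Lemma~\ref{lem:stay lip}, if $\phi$ is $K$-Lipschitz at $\omega$ then $KZ(g_t)(\phi)$ is $Ke^{-t}$-Lipschitz at $g_t\omega$; so the Lipschitz constant shrinks by exactly $e^{-t}$. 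For the norm lower bound, the curve $(KZ(g_t)(\phi))_{g_t\omega}(s) = KZ(g_t,\omega)\phi_\omega(se^{-2t})$ has, by the lower bound in Lemma~\ref{lem:KZbound}, $|KZ(g_t,\omega)\phi_\omega(se^{-2t})| \geq e^{-t}|\phi_\omega(se^{-2t})| \geq e^{-t} K/\kappa$. Hence $KZ(g_t)(\phi)$ is a horocycle section whose (new) Lipschitz constant $Ke^{-t}$ is bounded by $\kappa/(K/\kappa \cdot e^{-t})^{-1}$... more precisely its Lipschitz constant divided by its minimal norm is at most $(Ke^{-t})/(e^{-t}K/\kappa) = \kappa$, which is exactly the definition of projectively $\kappa$-Lipschitz at $g_t\omega$. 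So the $e^{-t}$ factors cancel and projective $\kappa$-Lipschitzness is preserved with the same $\kappa$.

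For the quantitative second part, the mechanism is the same but we replace the crude bound from Lemma~\ref{lem:KZbound} on the restriction to $\FF$ by the sharper bounds of Proposition~\ref{prop:easy restriction} (equivalently Proposition~\ref{prop:growth mechanism}'s companion, the two-sided estimate in Prop.~\ref{prop:easy restriction}). Concretely: since $\phi$ is valued in $\mathcal F \subset \FF$ and $\FF$ is KZ-invariant, the curve $(KZ(g_t)(\phi))_{g_t\omega}$ stays in $\FF_{g_t\omega}$, and Proposition~\ref{prop:easy restriction} gives, for the subbundle $\mathcal F$ and compact $\mathcal K$, a constant $\lambda_{\mathcal F,\mathcal K}\in[0,1)$ with
$$
\sup_{v\in\mathcal F_\omega}\frac{|KZ(g_t,\omega)v|}{|v|} \leq \exp\!\big(t - (1-\lambda_{\mathcal F,\mathcal K})v_{\mathcal K}(\omega,t)\big), \qquad \inf_{v\in\mathcal F_\omega}\frac{|KZ(g_t,\omega)v|}{|v|} \geq \exp\!\big(-t + (1-\lambda_{\mathcal F,\mathcal K})v_{\mathcal K}(\omega,t)\big).
$$
Write $E := \exp(t - (1-\lambda_{\mathcal F,\mathcal K})v_{\mathcal K}(\omega,t))$ for the upper factor and note the lower factor is $E^{-1}\cdot e^{2t}\cdot$... let me instead just name them: the Lipschitz constant of $\phi_\omega$ is $K$, so the Lipschitz constant of $s\mapsto KZ(g_t,\omega)\phi_\omega(se^{-2t})$ is at most $K e^{-2t}\cdot E$ (chain rule contributes $e^{-2t}$ from the reparametrization, the operator norm bound on $\mathcal F$ contributes $E$). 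Meanwhile its minimal norm is at least $(K/\kappa)\cdot E^{-1}$ — wait, I need the lower operator bound, which is $\exp(-t+(1-\lambda)v_{\mathcal K})= e^{-2t}E^{-1}\cdot e^{2t}$... I'll simply record it as: minimal norm $\geq (K/\kappa)\exp(-t+(1-\lambda_{\mathcal F,\mathcal K})v_{\mathcal K}(\omega,t))$. Taking the ratio, $\kappa_t \leq \dfrac{Ke^{-2t}E}{(K/\kappa)\exp(-t+(1-\lambda_{\mathcal F,\mathcal K})v_{\mathcal K})} = \kappa\, e^{-2t}\cdot \dfrac{\exp(t-(1-\lambda_{\mathcal F,\mathcal K})v_{\mathcal K})}{\exp(-t+(1-\lambda_{\mathcal F,\mathcal K})v_{\mathcal K})} = \kappa\, e^{-2t}\cdot e^{2t}\exp(-2(1-\lambda_{\mathcal F,\mathcal K})v_{\mathcal K}) = \kappa\exp(-2(1-\lambda_{\mathcal F,\mathcal K})v_{\mathcal K}(\omega,t))$.

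That leaves me with a factor of $2$ in the exponent relative to the claimed bound, which means I should track constants more carefully — the resolution is that the Lipschitz bound in Lemma~\ref{lem:stay lip} already absorbs one power: there the reparametrization $se^{-2t}$ and the operator norm $e^t$ combine to $e^{-t}$, i.e. the operator-norm contribution is the upper bound from Prop.~\ref{prop:easy restriction} \emph{divided by} the part already counted. The cleanest route, which I would actually write, is to bound the ratio $\kappa_t$ directly as (Lipschitz constant of $(KZ(g_t)(\phi))_{g_t\omega}$)/(min norm) and to use that \emph{both} numerator and denominator transform by operator-norm factors from Prop.~\ref{prop:easy restriction} after extracting the common $e^{-2t}$ reparametrization factor — one gets numerator $\leq K e^{-2t}\exp(t-(1-\lambda)v_{\mathcal K})$ and denominator $\geq (K/\kappa)e^{-2t}\exp(t-(1-\lambda)v_{\mathcal K})\cdot\exp(-2t+2(1-\lambda)v_{\mathcal K})$... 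The main obstacle, and the only genuinely delicate point, is getting these bookkeeping constants exactly right so the final exponent is $-(1-\lambda_{\mathcal F,\mathcal K})v_{\mathcal K}(\omega,t)$ and not $-2(1-\lambda_{\mathcal F,\mathcal K})v_{\mathcal K}(\omega,t)$; this is resolved by noting that Prop.~\ref{prop:easy restriction} already has a $t$ (not $2t$) in its exponent, so the upper operator norm is $\exp(t-(1-\lambda)v_{\mathcal K})$ and the lower is $\exp(-t+(1-\lambda)v_{\mathcal K})$, whose ratio is $\exp(2t-2(1-\lambda)v_{\mathcal K})$; dividing Lipschitz-over-norm, the reparametrization $e^{-2t}$ cancels, one operator factor in the numerator and the inverse operator factor in the denominator leave exactly $\exp(2t)\exp(-2(1-\lambda)v_{\mathcal K})$ times $\kappa e^{-2t}$... so in fact to land on the stated bound one should only use the \emph{upper} bound on the Lipschitz constant (via Lemma~\ref{lem:stay lip}, giving $Ke^{-t}$ where the $e^{-t}$ is $e^{-2t}$ from reparametrization times the trivial $e^t$ from Lemma~\ref{lem:KZbound}, NOT the improved bound) and the \emph{lower} improved bound on the norm from Prop.~\ref{prop:easy restriction}; then $\kappa_t \leq \dfrac{Ke^{-t}}{(K/\kappa)e^{-t}\exp((1-\lambda_{\mathcal F,\mathcal K})v_{\mathcal K}(\omega,t))} = \kappa\exp(-(1-\lambda_{\mathcal F,\mathcal K})v_{\mathcal K}(\omega,t))$, which is exactly the claim. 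So the proof I would write invokes Lemma~\ref{lem:stay lip} for the Lipschitz constant, the first-variation lower bound (Prop.~\ref{prop:easy restriction}, second bullet, applied to $\mathcal F$) for the minimal-norm growth, and combines them; the first (unquantified) assertion is the special case $\mathcal K=\emptyset$, $\lambda=0$, or directly the $e^{-t}$/$e^{-t}$ cancellation as above.
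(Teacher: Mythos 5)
Your final argument is exactly the paper's: the Lipschitz constant $Ke^{-t}$ for the transported curve from Lemma~\ref{lem:stay lip}, the minimal-norm lower bound $(K/\kappa)\,e^{-t+(1-\lambda_{\mathcal F,\mathcal K})v_{\mathcal K}(\omega,t)}$ from the second bullet of Proposition~\ref{prop:easy restriction}, and their ratio, with the first (unquantified) claim handled, as in the paper, by the $e^{-t}/e^{-t}$ cancellation coming from Lemmas~\ref{lem:stay lip} and~\ref{lem:KZbound}. One remark on your detour: the exponent $-2(1-\lambda_{\mathcal F,\mathcal K})v_{\mathcal K}(\omega,t)$ you treated as a bookkeeping error would in fact have been a \emph{stronger} estimate (the lemma only asserts an upper bound on $\kappa_t$), obtained by also applying the upper bullet of Proposition~\ref{prop:easy restriction} to the differences $\phi_\omega(s)-\phi_\omega(s')\in\mathcal F_\omega$; the paper simply does not need it.
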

\begin{proof} By Lemma~\ref{lem:stay lip}, if the horocycle section $\phi$ is  $K$-Lipschitz at $\omega$, 
then $KZ(g_t)(\phi)_{g_t\omega}$ is $Ke^{-t}$-Lipschitz, and by Lemma~\ref{lem:KZbound} it follows that  $\Vert KZ(g_t)(\phi)_{g_t\omega} (s) \Vert \geq e^{-t} 
\Vert  \pi_\omega(\phi) (s) \Vert$, for all $s\in [a,b]$. It follows immediately that  if 
$\Vert \pi_\omega(\phi) (s) \Vert_\omega \geq K/\kappa$, then
$$
\Vert KZ(g_t)(\phi)_{g_t\omega} (s) \Vert \geq e^{-t} 
\Vert  \phi_{\omega} (s) \Vert \geq  K e^{-t}/\kappa\,,
$$
hence $KZ(g_t)(\phi)$ is projectively $\kappa$-Lipschitz at $g_t\omega$ establishing the first claim. 

\noindent If $\phi:[a,b] \to \FF$, it follows from Proposition \ref{prop:easy restriction} that there exists
$\lambda:=\lambda_{\mathcal F, \mathcal K} \in [0,1)$ such that
\begin{multline}
\Vert KZ(g_t)(\phi)_{g_t\omega} (s)\Vert = \Vert KZ(g_t, \omega)(\phi_{\omega}(s))\Vert  \\  \geq   
e^{- t + (1-\lambda) v_{\mathcal K} (\omega,t)  }
 \Vert \phi_{\omega}(s)\Vert \,.
\end{multline}
Since $\phi$ is projectively  $\kappa$-Lipschitz at $\omega$, 
there exists $K>0$ such that $\phi$ is $K$-Lipschitz at $\omega$ and satisfies the lower bound 
$\Vert \phi_{\omega}(s) \Vert  \geq  K /\kappa$, for all $s\in [a,b]$.
It follows that
\begin{multline}
\Vert KZ(g_t)(\phi)_{g_t \omega}  (s)\Vert  \geq    (K / \kappa) e^{- t + (1-\lambda) v_{\mathcal K} (\omega,t)  }  \\  = K e^{-t} / (\kappa e^{- (1-\lambda) v_{\mathcal K} (\omega,t)  })\,,
\end{multline}
which implies that  the section $KZ(g_t)(\phi)$ is projectively  $\kappa_t$-Lipschitz  at $g_t \omega$ with 
$\kappa_t \leq   \kappa e^{ -(1-\lambda) v_{\mathcal K} (\omega,t)  } $, as stated.

\end{proof}

\begin{lem}\label{lem:growth from E} Let $\mathcal{F} \subset \hat{\mathcal{F}}$ be a subbundle where $KZ$ acts strongly irreducibly and with a positive exponent. There exists $\lambda >0$ such that the following holds. For any $\delta, \epsilon>0$ and for all sufficiently large $L>0$, there exist $\kappa_0:=
\kappa_0 (a,b,\delta, L)$ and an open set $\hat U:=\hat U(\delta, L)$  (as in Proposition \ref{prop:growth mechanism})  with $\nu(\hat U)>1-\delta$,  such that for all $\kappa \in (0, \kappa_0)$,  for all  $t\geq 0$, for all $s_0 \in [a+e^{-2t},b-e^{-2t}]$ and for all horocycle sections 
$\phi:[a,b]\to \mathcal{F}\subset \FF$  projectively $\kappa$-Lipschitz  at $h_{s_0} \omega$, whenever  $g_th_{s_0}\omega\in \hat{U}$,   we have  that 
\begin{multline}
\text{\rm Leb}(\{s\in [s_0- e^{-2t},s_0+e^{-2t}]:  \\ |KZ(g_{t+L},h_s\omega)\phi(s)|< e^{ {\lambda} L(1-\epsilon)}|KZ(g_t,h_s\omega)\phi(s)|\})<\delta e^{-2t}.
\end{multline}
\end{lem}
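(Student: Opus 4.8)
The plan is to reduce the statement to Proposition \ref{prop:growth mechanism} by rescaling the short horocycle arc $[s_0 - e^{-2t}, s_0 + e^{-2t}]$ to a unit arc via the geodesic flow $g_t$, and then controlling the ``straightening'' of the curve along the way. First I would set $\omega' = h_{s_0}\omega$ and reparametrize, so that WLOG we work with a projectively $\kappa$-Lipschitz section $\phi$ on an arc centered at $\omega'$; by Remark \ref{rem:Lipschitz_1} the projective Lipschitz constant changes only by a bounded factor $C_{a,b}$ when we move the basepoint within $[a,b]$, so this reduction costs nothing essential. Pushing forward by $g_t$ and invoking Lemma \ref{lem:stay_proj_lip}, the image section $KZ(g_t)(\phi)$ on the unit-length arc at $g_t\omega'$ is projectively $\kappa_t$-Lipschitz with $\kappa_t \leq \kappa$ (the refined bound $\kappa_t \leq \kappa\exp(-(1-\lambda_{\mathcal F,\mathcal K})v_{\mathcal K})$ is not even needed here, only $\kappa_t\leq \kappa$). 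So after rescaling we have a projectively $\kappa$-Lipschitz section on $[-1,1]$ at the point $g_t\omega' \in \hat U$.

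Next I would compare this genuine Lipschitz section to the parallel (constant-up-to-transport) section $v$ appearing in Proposition \ref{prop:growth mechanism}. Fix $\hat\epsilon > 0$ as produced by that proposition for the given $\delta, \epsilon, L$. Take $v$ to be the parallel horocycle section at $g_t\omega'$ with $v(0) = \phi_{g_t\omega'}(0)/\|\phi_{g_t\omega'}(0)\|$ (or any fixed normalization). Because $KZ(g_t)(\phi)$ is projectively $\kappa$-Lipschitz, its value $\phi_{g_t\omega'}(s)$ stays within a cone of angle $O(\kappa)$ of $v(s)$ for all $s\in[-1,1]$ — the Lipschitz bound controls the variation of the direction on the unit arc, and the lower bound $\|\phi_{g_t\omega'}(s)\| \geq K/\kappa$ converts the absolute deviation into an angular one. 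Hence choosing $\kappa_0 = \kappa_0(a,b,\delta,L)$ small enough that $C_{a,b}\kappa_0 < c\,\hat\epsilon$ for a suitable absolute constant $c$, we ensure $\angle(\phi_{g_t\omega'}(s), v(s)) < \hat\epsilon$ for every $s$. Now Proposition \ref{prop:growth mechanism} applies: the set of $s\in[-1,1]$ for which $|KZ(g_L, h_s(g_t\omega'))\,\phi_{g_t\omega'}(s)| \geq e^{\lambda L(1-\epsilon)}|\phi_{g_t\omega'}(s)|$ has Lebesgue measure $> 2-\delta$, i.e. the complement has measure $< \delta$.

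The final step is to unwind the rescaling. Using the commutation relation $g_L g_t h_s\omega = h_{e^{2(t+L)}s}\,g_{t+L}\omega$ together with the cocycle identity, the event $|KZ(g_L,\cdot)\phi_{g_t\omega'}(\sigma)| \geq e^{\lambda L(1-\epsilon)}|\phi_{g_t\omega'}(\sigma)|$ on the unit arc in the $\sigma$-variable translates, after the change of variables $\sigma \mapsto s$ that contracts $[-1,1]$ to $[s_0 - e^{-2t}, s_0 + e^{-2t}]$, precisely into the event $|KZ(g_{t+L}, h_s\omega)\phi(s)| \geq e^{\lambda L(1-\epsilon)}|KZ(g_t, h_s\omega)\phi(s)|$ in the original $s$-variable; the Jacobian of this change of variables is the constant $e^{-2t}$, so a bad set of measure $< \delta$ on $[-1,1]$ becomes a bad set of measure $< \delta e^{-2t}$ on $[s_0 - e^{-2t}, s_0+e^{-2t}]$, which is exactly the claimed bound. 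I expect the main obstacle to be the bookkeeping in this last step — tracking the interaction of the parallel-transport maps $\pi_\omega(s)$ with the rescaling $s\mapsto e^{-2t}s$ and making sure the quantities $|KZ(g_t,h_s\omega)\phi(s)|$ in the conclusion really are the norms of the rescaled section evaluated at the corresponding point — rather than anything conceptual; the quantitative conversion of ``projectively $\kappa$-Lipschitz on a unit arc'' into ``angle $< \hat\epsilon$ from the parallel section'' is routine but should be stated carefully.
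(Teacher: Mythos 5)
Your proposal is correct and follows essentially the same route as the paper's proof: push the section forward by $g_t$, reparametrize it as a horocycle section on the unit arc at $g_t h_{s_0}\omega$, use Lemma \ref{lem:stay_proj_lip} (only the first claim, that projective $\kappa$-Lipschitzness is preserved) to turn the projective Lipschitz bound into an angle bound $<\hat\epsilon$ from a parallel section for $\kappa<\kappa_0$ small, apply Proposition \ref{prop:growth mechanism} at $g_t h_{s_0}\omega\in\hat U$ via the cocycle identity $KZ(g_{t+L},h_{s_0+e^{-2t}s}\omega)\phi = KZ(g_L,h_s g_t h_{s_0}\omega)\,KZ(g_t)(\phi)$, and conclude by the change of variables with factor $e^{-2t}$. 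The bookkeeping you flag at the end is exactly how the paper finishes, so no gap remains.
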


\begin{proof}
The lemma follows by Proposition~\ref{prop:growth mechanism} applied to the horocycle 
$\{ h_s g_t (h_{s_0} \omega)  \vert  s \in [-1, 1]\}$  at $g_t h_{s_0}\omega$. Let us consider the  image  
$KZ(g_t) (\phi)$ of the section $\phi$ for $s\in [s_0-e^{-2t}, s_0 + e^{-2t}]$.  By definition this section is the 
restriction of a horocycle section at $g_t\omega$ and after reparametrization can be regarded as a horocycle
section at $g_t h_{s_0} \omega= h_{e^{2t}s_0} g_t\omega$, defined as $KZ(g_t, h_{s_0+e^{-2t}s}\omega) \phi(s_0+ e^{-2t}s)$ for $s\in  [-1,  1]$.  
 By Lemma~\ref{lem:stay_proj_lip}  since $\phi$ is projectively
$\kappa$-Lipschitz at $h_{s_0} \omega$, then $KZ(g_t) (\phi)$, and its reparametrization,  are still projectively 
$\kappa$-Lipschitz at $g_th_{s_0}\omega$. It follows that the  ratio $K/m$ between the
Lipschitz constant $K$ and the minimum Hodge norm $m$ of the curve   
$$
KZ(g_t)(\phi)_{g_t h_{s_0}\omega}(s) = \pi_{g_t h_{s_0}\omega} ( KZ(g_t)(\phi)(s)   ) \,, \quad \text{for }s\in  [-1,  1]\,,
$$
 is at  most $\kappa$, hence, for all $s\in [-1,1]$ we have that
$$
\angle\Big(KZ(g_t)(\phi)_{g_th_{s_0}\omega} (s), KZ(g_t)(\phi)_{g_th_{s_0}\omega} \phi (s_0)\Big)<  \kappa
$$
hence there exists $\kappa_0>0$ such that, for $\kappa \in (0, \kappa_0)$, the horocycle section 
$$KZ(g_t)(\phi) (e^{2t}s_0+s)=KZ(g_t, h_{s_0+ e^{-2t} s}\,\omega) \phi(s_0+e^{-2t}s)$$ 
 makes an angle at most $\hat \epsilon$ (with respect to the Hodge norm on $H^1_{h_sg_t h_{s_0} \omega}(M,\mathbb R)$) 
 with a parallel section, so that  since
$$
KZ(g_{t+L}, h_{s_0+e^{-2t}s}\omega)\phi (s_0+e^{-2t} s) = KZ(g_L,h_s  g_t h_{s_0}\omega) KZ(g_t)(\phi) (e^{2t}s_0+  s)
$$
by Proposition~ \ref{prop:growth mechanism} we have 
$$
\begin{aligned}
\text{\rm Leb}(\{-1 \leq s\leq 1:&|KZ(g_{t+L},h_{s_0+ e^{-2t} s}\,\omega) \phi (s_0+e^{-2t} s)| \\  &\geq e^{ \lambda L(1-\epsilon)} | KZ(g_t, h_{s_0+ e^{-2t} s}\,\omega) \phi(s_0+e^{-2t}s)|  \})> 2-\delta. 
\end{aligned} $$ 
which  implies the statement by change of variables.

\end{proof}
The main result of this section is the following lemma.
Let $\delta >0$ and let $L>0$ be sufficiently large. 

Let $\hat U:= \hat U(\delta, L)$ be the open set given by Proposition~\ref{prop:growth mechanism} and Lemma~\ref{lem:growth from E},  
and,  for all small $\epsilon>0$, let $\hat V_\epsilon:=V_\epsilon 
(\hat U(\delta,L)) \subset \supp(\nu)$ denote the open subset (which depends on $\hat U(\delta,L)$) given by Corollary \ref{cor:all together}).

\smallskip
\noindent
\textbf{Notation:}  For all $\epsilon, \delta>0$, let
\begin{equation}
\begin{aligned}
  \label{eq:eta/kappa}
 \eta (\epsilon, \delta)&:=\sqrt{3(\epsilon+\delta)}+2\sqrt{\delta},  \quad \text{ and }  \\
  \mu (\epsilon, \delta)&:= {\lambda}(1-\epsilon)   - \big( 1+  {\lambda}(1-\epsilon) \big) \eta(\epsilon, \delta)  \,.
\end{aligned}
\end{equation}

\begin{lem} \label{lem:Zgrowth from V}  
Let $\mathcal{F}\subset \hat{\mathcal{F}}$ be a subbundle where $KZ$ acts strongly irreducibly with a positive exponent. For any bounded interval $[a,b] \subset \mathbb R$,  for all $\delta>0$, for all $L>0$ sufficiently large, 
there exists $\kappa_0:= \kappa_0(a,b,\delta, L)$ such that the following holds.
For  all $\kappa \in (0, \kappa_0)$,  there exists $T_3>0$ and $t_0>0$ such that,  for all $s_0 \in [a+e^{-2t},b-e^{-2t}]$, whenever 
\begin{itemize} 
\item  $g_th_{s_0}\omega \in \hat V_\epsilon:= V_\epsilon (\hat U(\delta, L))$,
\item $
\phi: [a,b] \to \mathcal{F}$ 
is a projectively $\kappa$-Lipschitz horocycle section at $\omega$, 
\item $t>t_0$ and  $T>T_3$
\end{itemize} 
 we have
\begin{multline} \label{eq:Zgrowing}
\text{\rm Leb}\Bigl(\{s \in [s_0-e^{-2t},s_0+e^{-2t}]:   |KZ(g_{t+T},h_s\omega)\phi(s)|  \\ <
e^{\mu(\epsilon, \delta) T}  \,|KZ(g_t,h_s\omega)\phi(s)| \} \Bigr) < \eta (\epsilon, \delta) e^{-2t}.
\end{multline}
\end{lem}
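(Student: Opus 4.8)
The plan is to combine the "local growth over a window of length $L$" statement (Lemma~\ref{lem:growth from E}) with the "most vertical horocycle times land in $\hat V_\epsilon$" statement (Corollary~\ref{cor:all together}(iv)) via a Fubini/measure-counting argument over a dyadic partition of the geodesic time interval $[t,t+T]$. First I would fix $\delta>0$, take $L$ large enough that both Proposition~\ref{prop:growth mechanism} / Lemma~\ref{lem:growth from E} apply and that the conclusion of Corollary~\ref{cor:all together} holds with the open set $\hat U = \hat U(\delta,L)$ and $\hat V_\epsilon = V_\epsilon(\hat U(\delta,L))$; I would also choose $\kappa_0$ as in Lemma~\ref{lem:growth from E} (shrinking it if necessary so that the projective Lipschitz hypothesis survives all the intermediate geodesic pushforwards — this is where Remark~\ref{rem:Lipschitz_1} and the contraction estimate of Lemma~\ref{lem:stay_proj_lip} get used, since along the way the projective Lipschitz constant only improves). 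Then, writing $T = NL$ for a large integer $N$ (choosing $T_3$ and $t_0$ accordingly), I would think of the interval $[s_0 - e^{-2t}, s_0 + e^{-2t}]$ of horocycle parameters and, for each $j \in \{0,1,\dots,N-1\}$, track whether $g_{t+jL} h_s \omega$ lies in $\hat V_\epsilon$.

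The core of the argument is then a two-part estimate. Part one: by Corollary~\ref{cor:all together}(iv) applied along the horocycle through $g_t h_{s_0}\omega \in \hat V_\epsilon$ (with its time parameter $t=L$ in that corollary), the set of $s$ for which fewer than $(1-\epsilon)N$ of the times $g_{t+jL}h_s\omega$ lie in $\hat V_\epsilon$ has measure at most $\xi^N e^{-2t}$, which I would absorb into the $2\sqrt\delta$ term of $\eta(\epsilon,\delta)$ for $N$ large. Part two: for each $s$ in the complementary (large-measure) set and each of the $\geq (1-\epsilon)N$ "good" indices $j$, the section $KZ(g_{t+jL})(\phi)$ restricted to the relevant subinterval is still projectively $\kappa$-Lipschitz at $g_{t+jL} h_s\omega \in \hat V_\epsilon \subset \hat U$, so Lemma~\ref{lem:growth from E} says that except on a subset of relative measure $\delta$ at scale $e^{-2(t+jL)}$, the norm grows by a factor $e^{\lambda L(1-\epsilon)}$ over that step. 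Summing the "bad at step $j$" sets over $j$ — using that each lives at the finer scale $e^{-2jL}\cdot e^{-2t}$ so their total measure is at most $\big(\sum_j \delta e^{-2jL}\big)e^{-2t} \leq C\delta e^{-2t}$ — and combining with Part one gives that, off a set of measure at most $\eta(\epsilon,\delta)e^{-2t}$, the norm is multiplied by at least $e^{\lambda L(1-\epsilon)}$ on at least $(1-\epsilon)N$ of the $N$ steps, while on the remaining $\leq \epsilon N$ steps Lemma~\ref{lem:KZbound} costs at most $e^{L}$ per step. The resulting lower bound on $\tfrac{|KZ(g_{t+T},h_s\omega)\phi(s)|}{|KZ(g_t,h_s\omega)\phi(s)|}$ is
$$
e^{\lambda L(1-\epsilon)(1-\epsilon)N} e^{-L\epsilon N} = e^{\big(\lambda(1-\epsilon)(1-\epsilon) - \epsilon\big)T},
$$
and a short estimate shows $\lambda(1-\epsilon)(1-\epsilon) - \epsilon \geq \mu(\epsilon,\delta)$ with the definitions in \eqref{eq:eta/kappa} (here one has to be slightly careful that the exceptional-step losses and the $\sqrt{3(\epsilon+\delta)}$-type slack in $\eta$ line up with the $\big(1+\lambda(1-\epsilon)\big)\eta$ subtracted in $\mu$; I would reconcile the bookkeeping so the final exponent is $\geq \mu(\epsilon,\delta)T$).

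The main obstacle I anticipate is the scale-matching and uniformity across the $N$ steps: Lemma~\ref{lem:growth from E} is stated with a fixed $\delta$ and a window $L$ at a single geodesic time, and applying it inductively along $j=0,\dots,N-1$ requires that the section remain projectively $\kappa$-Lipschitz with the same $\kappa_0$ at every intermediate surface $g_{t+jL}h_s\omega$, which is exactly why $\kappa_0$ must be chosen depending on $[a,b]$ but not on $t$ or $j$ — the monotone improvement of the projective Lipschitz constant under the geodesic flow (Lemma~\ref{lem:stay_proj_lip}) is what makes this possible, and I would make sure the reparametrizations of subintervals of $[a,b]$ down to $[-1,1]$ at each stage do not reintroduce a dependence (Remark~\ref{rem:Lipschitz_1} handles the horocycle-translation part). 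The rest is a finite geometric-series estimate on exceptional sets and an elementary inequality between the exponents, both routine.
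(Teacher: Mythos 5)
Your overall architecture (split $[t,t+T]$ into $N=T/L$ windows, apply Lemma~\ref{lem:growth from E} on good windows, pay at most $e^{L}$ per bad window via Lemma~\ref{lem:KZbound}) matches the paper, but two steps as written do not hold up. First, you get your good windows from Corollary~\ref{cor:all together}(iv) and then assert $g_{t+jL}h_s\omega\in \hat V_\epsilon\subset \hat U$, but there is no such inclusion: $\hat V_\epsilon=V_\epsilon(\hat U)$ is the interior of an intersection of sublevel sets of the Eskin--Mirzakhani--Mohammadi functions $f_i$ attached to the exceptional manifolds of Theorem~\ref{thm:EMM 2.7}; it is a set of base points from which horocycle time-space averages equidistribute \emph{into} $\hat U$, not a subset of $\hat U$. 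Lemma~\ref{lem:growth from E} needs $g\cdot h_s\omega\in\hat U$ at the moment you apply it, so membership in $\hat V_\epsilon$ at times $t+jL$ buys you nothing directly. The paper instead uses property (iii) of Corollary~\ref{cor:all together}: from $g_th_{s_0}\omega\in\hat V_\epsilon$ one gets $\frac1{2T}\int_{-1}^1\int_0^T\chi_{\hat U}(g_\ell h_s g_t h_{s_0}\omega)\,d\ell\,ds>1-\delta-\epsilon$, then a pigeonhole in the time offset $a\in[0,L]$ and a Chebyshev estimate in $s$ give that, off a set of measure $\sqrt{2(\epsilon+\delta)+L/T}\,e^{-2t}$, most of the times $t+a+jL$ land in $\hat U$. (Part (iv) is reserved for Lemma~\ref{lem:key step}, not this lemma.)

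Second, your exceptional-set bookkeeping is off by a factor that destroys the estimate. Lemma~\ref{lem:growth from E} gives \emph{relative} measure $\delta$ of failure on windows of radius $e^{-2(t+jL)}$, and such windows tile all of $[s_0-e^{-2t},s_0+e^{-2t}]$; hence the bad set at step $j$ has measure comparable to $\delta e^{-2t}$, not $\delta e^{-2(t+jL)}$, so your geometric series is really $\sum_j\delta e^{-2t}\approx N\delta e^{-2t}$, which diverges with $N=T/L$ for fixed $\delta$. The paper avoids any union bound over $j$: it integrates the count $J(s)$ of ``in $\hat U$ but not growing'' steps, getting $\int J\le 4\delta\lfloor T/L\rfloor e^{-2t}$, and then Markov's inequality shows the set where $J(s)>2\sqrt\delta\lfloor T/L\rfloor$ has measure at most $2\sqrt\delta e^{-2t}$. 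The surviving points may still have an $\eta(\epsilon,\delta)$-fraction of non-growing steps, each costing a factor $e^{-L}$, and this is exactly why the final exponent is $\mu(\epsilon,\delta)=\lambda(1-\epsilon)-(1+\lambda(1-\epsilon))\eta(\epsilon,\delta)$ rather than your $\lambda(1-\epsilon)^2-\epsilon$; your plan to ``reconcile the bookkeeping'' cannot work until the per-step union bound is replaced by this count-and-Chebyshev argument. Your points about maintaining the projective Lipschitz constant via Lemma~\ref{lem:stay_proj_lip} and Remark~\ref{rem:Lipschitz_1} are correct and are indeed how the paper handles that part.
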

\begin{proof} 

Let $\delta>0$ and $L>0$ sufficiently large be fixed. 
By Corollary \ref{cor:all together} (iii) (and the fact that $\nu(\hat{U})>1-\delta$), since $g_th_{s_0} \omega \in \hat V_\epsilon$,  we have that, for all $T>T_2$, 
\begin{multline*} \int_0^T\int_{-1}^1 \chi_{\hat{U}}(g_\ell h_s g_th_{s_0}\omega)dsd\ell \\ = { e^{2t}}
\int_0^T\int_{-e^{-2t}}^{e^{-2t}} \chi_{\hat{U}}(g_\ell g_th_{s_0+s}\omega)dsd\ell>2T(1-\delta -\epsilon).
\end{multline*} 
That is,
 $$ \sum_{j=0}^{\lfloor \frac {T } {L}\rfloor-1}\int_0^L e^{2t} \int_{-e^{-2t}}^{e^{-2t}} \chi_{\hat{U}}(g_\ell g_{jL} g_th_{s_0+s}\omega)dsd\ell >2T-2T(\epsilon+\delta)-L.$$
From this we have that there exists $a \in [0,L]$ so that 
$$\sum_{j=0}^{\lfloor \frac {T} {L}\rfloor-1} e^{2t} \int_{-e^{-2t}}^{e^{-2t}} \chi_{\hat{U}}(g_{t+ a+jL} h_{s_0+s}\omega)ds  >\frac{2T-2T(\epsilon+\delta)-L}L.$$ 

It follows that except for a set of  $s \in [s_0-e^{-2t},s_0+e^{-2t}]$ with measure at most $ {e^{-2t}} \sqrt{2(\epsilon+\delta)+\frac L {T}} $
we have 
 $$\sum_{j=0}^{\lfloor\frac{T}L\rfloor -1} \chi_{\hat{U}}(g_{t+a+jL}h_s\omega)>\Big (1 -\sqrt{2(\epsilon+\delta)+\frac L {T}}\Big) \lfloor \frac{T}L\rfloor.$$ 
\ For  $t\geq t_0$ and for all  $s\in [s_0 -e^{-2t}, s_0 +e^{-2t}]$, let  
 $$
 \begin{aligned}
 J(s) := &\# \Big\{j\in \{ 0, \dots,
 \lfloor \frac {T} {L}\rfloor-1 \}:g_{t+a+jL}h_s\omega \in \hat{U} \\ & \text{ but } 
 |KZ(g_{t+a+(j+1)L},h_s\omega)\phi(s)|< e^{ {\lambda L}(1-\epsilon)}|KZ(g_{t+a+jL},h_s\omega)\phi(s)|\Big\}\,.
\end{aligned}
 $$
 We claim that 
 \begin{multline}
 \label{eq:in but not grow} 
 \text{\rm Leb}\Big(\Big\{s\in[s_0-e^{-2t},s_0+e^{-2t}]:  J(s) >2\sqrt{\delta} {\lfloor \frac{T}L\rfloor\Big\}\Big)}<2\sqrt{\delta}{e^{-2t}.}
\end{multline}
 In fact, for each $j$, let $\chi_j$ denote the characteristic function of the set 
 \begin{multline}
 \{s \in [s_0 -e^{-2t}, s_0 +e^{-2t}] : g_{t+a+jL}h_s\omega \in \hat{U} \\ \text{ but } |KZ(g_{t+a+(j+1) L},h_s\omega)\phi(s)| < e^{ {\lambda} L(1-\epsilon)}|KZ(g_{t+a+j L},h_s\omega)\phi(s)|\}.
 \end{multline}
 Since the interval $[a,b]$ is bounded, by Remark~\ref{rem:Lipschitz_1} there exists a constant $C_{a,b}>0$ such that if the section $\phi:[a,b] \to H^1(M, \mathbb R)$ is projectively $\kappa$-Lipschitz at $\omega$, then $\phi$ is projectively 
$C_{a,b} \kappa$-Lipschitz at $h_s\omega$, for all $s \in [a,b]$.  It then follows from  
Lemma~\ref{lem:stay_proj_lip}  that, for all $j\in \mathbb N$, the section $KZ(g_{t+a+j L})(\phi):[e^{2t}a, e^{2t}b] \to H^1(M, \mathbb R)$, is still  projectively $C_{a,b}\kappa$-Lipschitz at $g_{t+a+j L} h_s \omega$.  Thus, by Lemma \ref{lem:growth from E}, there exists $\kappa_0:= \kappa_0(a,b, \delta,L)$ such that, for all $\kappa \in (0, \kappa_0)$ 
if $t\geq t_0$, and $g_{t+a+j L}h_s\omega \in \hat{U} $, then 
 \begin{multline*}\label{eq:first bound}
\text{\rm Leb}( \{s' \in [s -e^{-2(t+a+j L)}, s +e^{-2(t+a+jL)}] :
 |KZ(g_{t+a+(j+1) L},h_s\omega)\phi(s)| \\ < e^{ {\lambda} L(1-\epsilon)}|KZ(g_{t+a+j L},h_s\omega)\phi(s)| \} )<
\delta e^{-2(t+a+jL)} \,.
 \end{multline*} 
Thus, 
 \begin{multline*}
\text{\rm Leb}(  \{s \in [s_0 -e^{-2t}, s_0 +e^{-2t}] : g_{t+a+jL}h_s\omega \in \hat{U} \text{ but } \\  |KZ(g_{t+a+(j+1) L},h_s\omega)\phi(s)| < e^{ {\lambda} L(1-\epsilon)}|KZ(g_{t+a+j L},h_s\omega)\phi(s)|\} )< 4 \delta e^{-2t}.
 \end{multline*}
 It then follows that
 $$
 \int_{-e^{-2t}}^{e^{-2t}}  \sum_{j=0}^{\lfloor \frac{T}L\rfloor -1 } \chi_j(s_0+s)  ds  \leq   \sum_{j=0}^{\lfloor \frac{T}L\rfloor -1 }  \int_{-e^{-2t}}^{e^{-2t}}  \chi_j (s_0+s) ds \leq 4\lfloor \frac{T}L\rfloor   \delta e^{-2t}\,.
 $$
which finally implies that 
 $$
\text{\rm Leb}  (\{ s\in   [s_0 -e^{-2t}, s_0 +e^{-2t}] :  \sum_{j=0}^{\lfloor \frac{T}L\rfloor -1 } \chi_j(s_0+s) > 2 \sqrt{\delta} \lfloor \frac{T}L\rfloor\})
\leq   2\sqrt{\delta} e^{-2t}\,,
 $$
 as claimed.
 
Choosing $T_3>0$ so large that $\frac L {T_3}<\epsilon+\delta$ gives a set of measure  at least $[1-(\sqrt{3(\epsilon+\delta)}+2\sqrt{\delta}) ] {e^{-2t}}$, where we have that, whenever $T>T_3$,  for at least $\frac {T}L({1-\sqrt{3(\epsilon+\delta)} -2\sqrt{\delta}})$ of our indices the cocycle grows by at least $e^{ {\lambda} L(1-\epsilon)}$.  By Lemma \ref{lem:KZbound}, the cocycle reduces {the Hodge norm} by multiplication times a factor larger than $e^{-L}$ on the remaining indices, thereby giving the bound. 
\end{proof}

\subsection{Some probabilistic results}
\label{subsec:largedev}
We collect below some well-known probabilistic results for the convenience of the reader. 
\begin{lem}\label{lem:comparison}
Let  $(\Omega,\mu)$ be a probability space and 
$F_i:(\Omega,\mu) \to \{0,1\}$ be a sequence of random variables such  that there exists $0<\rho<1$ so that for any $j$, the conditional probability that $F_j$ is $1$ given $F_1,\dots,F_{j-1}$ is at least $\rho$. Let $G_i:(\Omega,\mu) \to \{0,1\}$ be independent and so that $\mu(G_i^{-1}(1))=\rho$. 
 Then  for all $\ell$ and $r$,
$$\mu(\{\omega:\sum_{i=1}^\ell F_i(\omega)\leq r\})\leq \mu(\{\omega:\sum_{i=1}^\ell G_i(\omega)\leq r\}).$$
\end{lem}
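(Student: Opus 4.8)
The plan is to prove this by a coupling argument, constructing on a common probability space a pair of sequences $(\tilde F_i)$ and $(\tilde G_i)$ with the same joint laws as $(F_i)$ and $(G_i)$ respectively, but satisfying $\tilde G_i(\omega) \leq \tilde F_i(\omega)$ pointwise for every $i$ and every $\omega$. Once such a coupling exists, the conclusion is immediate: on the common space $\sum_{i=1}^\ell \tilde G_i \leq \sum_{i=1}^\ell \tilde F_i$ pointwise, so $\{\sum \tilde G_i \leq r\} \supseteq \{\sum \tilde F_i \leq r\}$, and taking probabilities and then transporting back to the original laws gives exactly the stated inequality $\mu(\{\sum_{i=1}^\ell F_i \leq r\}) \leq \mu(\{\sum_{i=1}^\ell G_i \leq r\})$.

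To build the coupling, I would work on the product space $([0,1]^{\mathbb N}, \text{Leb}^{\mathbb N})$ with coordinates $U_1, U_2, \dots$ i.i.d. uniform. Define $\tilde F_1, \tilde F_2, \dots$ inductively: having defined $\tilde F_1, \dots, \tilde F_{j-1}$ (hence having revealed a value of the prefix), let $p_j$ denote the conditional probability that $F_j = 1$ given that prefix — by hypothesis $p_j \geq \rho$ — and set $\tilde F_j = 1$ iff $U_j \leq p_j$. A routine check shows $(\tilde F_i)$ has the same joint distribution as $(F_i)$. Now set $\tilde G_j = 1$ iff $U_j \leq \rho$, using the \emph{same} uniforms $U_j$. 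Since the $U_j$ are independent and $\mathrm{Leb}(U_j \leq \rho) = \rho$, the sequence $(\tilde G_i)$ is i.i.d. with the correct marginal, matching the law of $(G_i)$. Finally, because $p_j \geq \rho$ for every $j$ and every prefix, the event $\{U_j \leq \rho\}$ is contained in $\{U_j \leq p_j\}$, so $\tilde G_j \leq \tilde F_j$ pointwise, as required.

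There is no serious obstacle here; the only point requiring a small amount of care is verifying that the inductively constructed $(\tilde F_i)$ genuinely reproduces the joint law of $(F_i)$ — this is a standard disintegration/regular-conditional-probability argument, using that $\{0,1\}$-valued sequences have trivial conditioning structure (all conditional laws are Bernoulli, determined by a single parameter in $[0,1]$), so the inductive construction via thresholds of independent uniforms is exactly the standard ``quantile coupling'' realization. One should also note the harmless edge case where the prefix event has probability zero, in which case $p_j$ may be defined arbitrarily in $[\rho,1]$ without affecting the law. With the coupling in hand the monotonicity $\sum \tilde G_i \leq \sum \tilde F_i$ and the resulting inequality on distribution functions of the sums follow with no further computation.
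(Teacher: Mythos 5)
Your coupling argument is correct, and it is the standard proof of this stochastic-domination statement: realizing both sequences via thresholds of the same i.i.d.\ uniforms, with the $F$-thresholds $p_j\geq\rho$ depending on the revealed prefix, gives $\tilde G_j\leq \tilde F_j$ pointwise and hence the inequality between the distribution functions of the partial sums; since the two events in the conclusion depend only on the respective joint laws, transporting back from the coupling space is legitimate. For comparison, the paper offers no proof at all of this lemma --- it is listed among ``well-known probabilistic results'' collected for the reader's convenience --- so your write-up simply supplies the standard argument the authors had in mind, including the correct handling of null prefix events, and there is no gap.
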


By standard large deviations results we obtain:

\begin{cor} \label{cor:standard large deviations} Let  $(\Omega,\mu)$ be a probability space and 
$F_j:(\Omega,\mu) \to \{0,1\}$ be a sequence of random variables such  that there exists $0<\rho<1$ so that for any $j$, the conditional probability that $F_j$ is $1$ given $F_1,\dots,F_{j-1}$ is at least $\rho$. For all $\epsilon>0$ there exists $C_1,C_2>0$ so that 
$$\mu(\{\omega:\sum_{j=1}^\ell F_j(\omega)\leq (\rho-\epsilon)\ell\})\leq C_1e^{-C_2 \ell}.$$
\end{cor}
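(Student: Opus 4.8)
The plan is to deduce the corollary from Lemma~\ref{lem:comparison} together with the classical Cram\'er--Chernoff estimate for sums of i.i.d.\ Bernoulli variables. First I would set $G_j:(\Omega,\mu)\to\{0,1\}$ to be independent with $\mu(G_j^{-1}(1))=\rho$ (replacing $(\Omega,\mu)$ by a product extension if necessary; the conclusion only involves the law of $\sum_j G_j$, which is binomial). By Lemma~\ref{lem:comparison}, for every $\ell$ and every $r$,
$$
\mu\Big(\Big\{\omega:\sum_{j=1}^\ell F_j(\omega)\le r\Big\}\Big)\le \mu\Big(\Big\{\omega:\sum_{j=1}^\ell G_j(\omega)\le r\Big\}\Big),
$$
so it suffices to establish the exponential bound for the i.i.d.\ sum with $r=(\rho-\epsilon)\ell$.

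Next I would apply the exponential Markov inequality: for every $\theta>0$,
$$
\mu\Big(\sum_{j=1}^\ell G_j\le(\rho-\epsilon)\ell\Big)\le e^{\theta(\rho-\epsilon)\ell}\,\mathbb{E}\big[e^{-\theta\sum_{j=1}^\ell G_j}\big]=\Big(e^{\theta(\rho-\epsilon)}\big(\rho e^{-\theta}+1-\rho\big)\Big)^\ell,
$$
where the last equality uses independence and the common law of the $G_j$. Writing $\psi(\theta):=\theta(\rho-\epsilon)+\log(\rho e^{-\theta}+1-\rho)$, one has $\psi(0)=0$ and $\psi'(0)=(\rho-\epsilon)-\rho=-\epsilon<0$, so $\psi(\theta_0)<0$ for some $\theta_0>0$; then $C_1:=1$ and $C_2:=-\psi(\theta_0)>0$ give the claim. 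The degenerate case $\rho-\epsilon\le 0$ is handled separately and trivially: if $\rho-\epsilon<0$ the event is empty, and if $\rho-\epsilon=0$ it equals $\{\sum_{j=1}^\ell F_j=0\}$, which by the conditional-probability hypothesis (chain rule) has measure at most $(1-\rho)^\ell$.

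I do not expect a genuine obstacle here: once Lemma~\ref{lem:comparison} is available this is a textbook computation. The only point meriting a line of care is the reduction to the auxiliary i.i.d.\ family $G_j$ --- ensuring that Lemma~\ref{lem:comparison} applies and that the right-hand side is exactly a binomial tail, which one may freely evaluate on any convenient model --- and, correspondingly, keeping track of the trivial boundary cases in $\rho-\epsilon$.
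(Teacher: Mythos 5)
Your proposal is correct and follows exactly the route the paper intends: the paper derives this corollary from Lemma~\ref{lem:comparison} by invoking ``standard large deviations results,'' which is precisely the comparison-to-i.i.d.\ Bernoulli reduction plus the Chernoff/exponential Markov computation you carry out explicitly. Your treatment of the boundary cases $\rho-\epsilon\le 0$ and of realizing the independent variables $G_j$ (or just evaluating the binomial tail) is careful but not substantively different from the paper's implicit argument.
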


\begin{cor}\label{cor:large deviations}
Let  $(\Omega,\mu)$ be a probability space, $k \in \mathbb{N}$ and 
$F_j:(\Omega,\mu) \to \{0,1\}$ be a sequence of random variables such  that there exists $0<\rho<1$ so that for any $j$, the conditional probability that $F_j$ is $1$ given $F_1,...,F_{j-k}$ is at least $\rho$. For all $\epsilon>0$ there exists $C_3,C_4>0$ so that 
$$\mu(\{\omega:\sum_{j=1}^\ell F_j(\omega)\leq (\rho-\epsilon)\ell\})\leq C_3e^{-C_4 \ell}.$$
\end{cor}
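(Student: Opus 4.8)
The plan is to deduce Corollary~\ref{cor:large deviations} from Corollary~\ref{cor:standard large deviations} by a standard decoupling argument that splits the index set into $k$ arithmetic progressions. First I would fix $\epsilon>0$ and, for each residue class $i \in \{0,1,\dots,k-1\}$, consider the subsequence $F^{(i)}_m := F_{i + km}$ for $m \geq 1$ (with indices ranging over $\{m : 1 \leq i+km \leq \ell\}$, of which there are roughly $\ell/k$). The point of the hypothesis ``the conditional probability that $F_j$ is $1$ given $F_1,\dots,F_{j-k}$ is at least $\rho$'' is that within a single residue class the previous terms $F^{(i)}_1,\dots,F^{(i)}_{m-1}$ are all among $F_1,\dots,F_{(i+km)-k}$, so the conditional probability that $F^{(i)}_m$ is $1$ given $F^{(i)}_1,\dots,F^{(i)}_{m-1}$ is still at least $\rho$. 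Hence each subsequence $(F^{(i)}_m)_m$ satisfies the hypothesis of Corollary~\ref{cor:standard large deviations} (after noting that conditioning on a smaller $\sigma$-algebra — namely the one generated by only the same-residue-class earlier terms — preserves the lower bound $\rho$, since $\mathbb{E}[F^{(i)}_m \mid F^{(i)}_1,\dots,F^{(i)}_{m-1}] = \mathbb{E}\big[\mathbb{E}[F^{(i)}_m \mid F_1,\dots,F_{i+km-k}] \mid F^{(i)}_1,\dots,F^{(i)}_{m-1}\big] \geq \rho$).

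Next I would observe the elementary pigeonhole fact that if $\sum_{j=1}^\ell F_j(\omega) \leq (\rho - \epsilon)\ell$, then, since $\sum_{j=1}^\ell F_j = \sum_{i=0}^{k-1} \sum_m F^{(i)}_m$ and there are $k$ residue classes, there must exist at least one residue class $i$ for which $\sum_m F^{(i)}_m(\omega) \leq (\rho-\epsilon)\ell/k \leq (\rho-\epsilon)\,\ell_i + O(1)$, where $\ell_i$ is the number of indices in class $i$; adjusting $\epsilon$ slightly (say replacing $\epsilon$ by $\epsilon/2$ and absorbing the additive $O(1)$ error for $\ell$ large, and handling small $\ell$ by enlarging the constant $C_3$) we get $\sum_m F^{(i)}_m(\omega) \leq (\rho - \epsilon/2)\ell_i$. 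Therefore
$$
\mu\Big(\Big\{\omega : \sum_{j=1}^\ell F_j(\omega) \leq (\rho-\epsilon)\ell\Big\}\Big) \leq \sum_{i=0}^{k-1} \mu\Big(\Big\{\omega : \sum_m F^{(i)}_m(\omega) \leq (\rho - \tfrac{\epsilon}{2})\ell_i\Big\}\Big).
$$
Applying Corollary~\ref{cor:standard large deviations} to each of the $k$ subsequences with error $\epsilon/2$ gives constants $C_1', C_2'>0$ with each term bounded by $C_1' e^{-C_2' \ell_i}$, and since $\ell_i \geq \ell/k - 1$, each term is at most $C_1' e^{C_2'} e^{-C_2' \ell / k}$. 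Summing the $k$ terms and setting $C_3 := k C_1' e^{C_2'}$ and $C_4 := C_2'/k$ yields the claimed bound $C_3 e^{-C_4 \ell}$.

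I do not expect any serious obstacle here; the only point requiring a little care is the tower-property computation showing that passing to a coarser conditioning $\sigma$-algebra preserves the lower bound $\rho$ — but this is immediate from monotonicity of conditional expectation and the fact that $F^{(i)}_1,\dots,F^{(i)}_{m-1}$ are measurable with respect to $\sigma(F_1,\dots,F_{i+km-k})$. The bookkeeping with the additive $O(1)$ terms coming from $\lceil \cdot \rceil$ and from the difference between $\ell/k$ and the true class sizes $\ell_i$ is routine and can be handled by shrinking $\epsilon$ and enlarging the final constant; one should just remark that the inequality is vacuous or trivial for $\ell$ below a fixed threshold, which is absorbed into $C_3$.
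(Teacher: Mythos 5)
Your proof is correct and follows essentially the same route as the paper: both split the indices into the $k$ arithmetic progressions mod $k$, note that conditioning on the coarser $\sigma$-algebra generated by the same-class earlier terms preserves the lower bound $\rho$ so that Corollary~\ref{cor:standard large deviations} applies to each subsequence, and then use a pigeonhole/union bound with $\epsilon$ replaced by $\epsilon/2$ and constants absorbing the boundary effects and small $\ell$. No discrepancies worth noting.
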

 \begin{proof}  Let us consider the $k$ sequences of random variables: $(F_{1+jk})_{j\in \mathbb N}$, $(F_{2+jk})_{j\in \mathbb N}, \dots, (F_{k+jk})_{j \in \mathbb N}$. For each $i\in \{1, \dots, k\}$ the sequence
$(F_{i+jk})_{j\in \mathbb N}$ satisfies the hypothesis of Corollary~\ref{cor:standard large deviations}. In fact,
by assumption the conditional probabability that $F_{i +jk}$ is $1$ given $F_1,\dots, F_{i+jk-k}$ is at least $\rho$.
Since $\{F_{i},\dots, F_{i +(j-1)k}\} \subset \{F_1,\dots, F_{i+jk-k}\}$, it follows that the conditional probability that $F_{i +jk}$ is $1$ given $F_{i+1},\dots, F_{i +(j-1)k}$ is also at least $\rho$.
By Corollary~\ref{cor:standard large deviations} we therefore have that for each $i \in \{1, \dots, k\}$
$$\mu(\{\omega:\sum_{j=1}^\ell F_{i+jk} (\omega)\leq (\rho-\epsilon)\ell\})\leq C_1(\epsilon)e^{-C_2(\epsilon) \ell}.$$
Finally we have
$$
\{\omega:\sum_{j=1}^\ell F_j (\omega)\leq (\rho-\epsilon)\ell\} \subset \bigcup_{i=1}^k 
\{\omega:\sum_{j=1} ^{ \lfloor \frac{\ell-i} k \rfloor} F_{i+jk} (\omega)\leq (\rho-\epsilon)\ell/k\} \,,
$$
hence there exists $\ell_0:=\ell_0(k,\epsilon)$ such that for $\ell \geq \ell_0$ we have
\begin{multline}
\mu(\{\omega:\sum_{j=1}^\ell F_j (\omega)\leq (\rho-\epsilon)\ell\})  \\ \leq 
\sum_{i=1}^k \mu(\{\omega:\sum_{j=1} ^{ \lfloor \frac{\ell-i} k \rfloor} F_{i+jk} (\omega)\leq (\rho-\epsilon/2) \lfloor \frac{\ell-i} k \rfloor\}) \\\leq kC_1(\epsilon/2) e^{-C_2(\epsilon/2) (\ell/k -2)}\,.
\end{multline}
Thus the estimate in the statement holds (for $\ell$ sufficiently large) with $C_3(\epsilon) 
= kC_1(\epsilon/2)  e^{2C_2(\epsilon/2)}$ and $C_4(\epsilon)= C_2(\epsilon/2)$. 
\end{proof}

\section{The key proposition}\label{sec:key prop}

We recall that our strategy, based on Veech's criterion for weak mixing (see  \S\S \, \ref{ssec:Veech_criterion} below),  consists in establishing, on a large measure set of parameters,  growth  of the distance $\Vert \cdot \Vert_{\mathbb Z}$ from the integer lattice of the images of the cohomology classes of the line $\mathbb R \cdot [\Im(\omega)]$ under the Kontsevich--Zorich cocycle at arbitrarily large return times  of the Teichm\"uller orbit $\{g_t\omega \vert t>0\}$ to a given compact subset $\mathcal K$ of the moduli space. The following statement is the key step in our argument.

\begin{prop}\label{prop:better} Let $\kappa_0$ and $\mathcal{F}\subset \hat{\mathcal{F}}$ be as in Lemma \ref{lem:Zgrowth from V}. There exist  constants $\sigma>0,\tau>0$ so that for all large enough compact sets $\mathcal{K}$, there exists $\gamma_{\mathcal K}>0$ such that all horocycle sections $\phi:[-1, 1] \to \mathcal F \subset \FF$  at $\omega$, under the conditions that, for some 
$s_0 \in [-1,1]$ we have
\begin{enumerate}[label=(\alph*)]
\item $e^t $ large enough and  $\gamma \in (0, \gamma_{\mathcal K})$ ;  \item \label{cond:in cpct 2} $g_t h_{s_0} \omega \in \mathcal{K}$, 
\item \label{cond:lip transport} the section $\phi_{t,s_0} \colon [-1,1] \to H^1(M,\mathbb{R})$ defined by 
\begin{equation}
\label{eq:phi_t_s_0}
\phi_{t,s_0}(s)=KZ(g_t, h_{s_0 + s e^{-2t}}\omega) \phi(s_0  + se^{-2t})
\end{equation}
is $\gamma\kappa$-Lipschitz at $h_{s_0}\omega$,  with
$\kappa\in (0,\min \{1,\kappa_0\})$,  
\item\label{cond:max size} $ \max_{s\in [-1,1]}\Vert \phi_{t,s_0}(s)\Vert_{\mathbb Z}= \gamma$,
\end{enumerate}
there exists a set $\CalS_{t,s_0} \subset [s_0-e^{-2t}, s_0+ e^{-2t}]$ \footnote{Note that these are the parameters that \ref{cond:lip transport} refers to.}  of Lebesgue measure \begin{equation}\label{eq:meas lost}\text{\rm Leb} (\CalS_{t,s_0} )>2(1-\gamma^\sigma) e^{-2t}
\end{equation}
such that, for each $s \in \CalS_{t,s_0}$, there exists $\ell:=\ell (s)$ 
such that
\begin{enumerate}[label=(\Alph*)]
\item\label{conc:time} 
$ \frac 1 2\vert  \log \gamma \vert \leq \ell  \leq  \frac{3}{4}\vert\log \gamma\vert$;
\item\label{conc:in cpct}  $g_{t+\ell }h_{s} \omega \in \mathcal{K}$; 
 \item\label{conc:growth} $  \max_{s'\in [-1,1]} \Vert \phi_{t+\ell,s}(s')  \Vert_{\mathbb{Z}}\geq
 \Vert \phi_{t+\ell,s}(0)  \Vert_{\mathbb{Z}} >e^{\tau \ell} (1-\kappa)  \gamma$;

\end{enumerate}
\end{prop}

 The proposition formally implies the following corollary, with different choices of constants. Although the constants in the corollary may be different from those in the proposition, we denote them with the same symbol for ease of notation\footnote{In particular, the bounds on the number of subbundles and on the angle between them alter the estimates in formulas \eqref{eq:meas lost} and \ref{conc:growth} by multiplicative constants. However, given any $\sigma',\tau'$ strictly smaller than the minimum over the possible subbundles $\mathcal{F}$ of the constants $\sigma,\tau$ given in the previous proposition, if $\gamma'_{\mathcal{K}}$ is small enough, we can absorb the multiplicative constants. The basic fact is that if $c,\epsilon, C$  are fixed and $0<x<1$ is small enough then $Cx^{c+\epsilon}<x^c<\frac 1 C x^{c-\epsilon}$.}.
\begin{cor}\label{cor:better}
Let $\kappa_0$ be as in Lemma \ref{lem:Zgrowth from V}.   Let $\tilde{\mathcal{F}}$ be a direct sum of continuously varying strongly irreducible subbundles, each with a positive exponent. There exist  constants $\sigma>0,\tau>0$ so that for all large enough compact sets $\mathcal{K}$, there exists $\gamma_{\mathcal K}>0$ such that all horocycle sections $\phi:[-1, 1] \to \FF$  at $\omega$, under the conditions that, for some 
$s_0 \in [-1,1]$ we have
\begin{enumerate}[label=(\alph*)]
\item $e^t $ large enough and  $\gamma \in (0, \gamma_{\mathcal K})$ ;  \item \label{cond:in cpct 2} $g_t h_{s_0} \omega \in \mathcal{K}$, 
\item \label{cond:lip transport 2} the section $\phi_{t,s_0} \colon [-1,1] \to H^1(M,\mathbb{R})$ defined by 
\begin{equation}
\label{eq:phi_t_s_0 2}
\phi_{t,s_0}(s)=KZ(g_t, h_{s_0 + s e^{-2t}}\omega) \phi(s_0  + se^{-2t})
\end{equation}
is $\gamma\kappa$-Lipschitz at $h_{s_0}\omega$,  with
$\kappa\in (0,\min \{1,\kappa_0\})$,  
\item\label{cond:max size 2} $ \max_{s\in [-1,1]}\Vert \phi_{t,s_0}(s)\Vert_{\mathbb Z}= \gamma$,
\end{enumerate}
there exists a set $\CalS_{t,s_0} \subset [s_0-e^{-2t}, s_0+ e^{-2t}]$   of Lebesgue measure \begin{equation}\label{eq:meas lost 2}\text{\rm Leb} (\CalS_{t,s_0} )>2(1-\gamma^\sigma) e^{-2t}
\end{equation}
such that, for each $s \in \CalS_{t,s_0}$, there exists $\ell:=\ell (s)$ such that
\begin{enumerate}[label=(\Alph*)]
\item\label{conc:time 2} 
$ \frac 1 2\vert  \log \gamma \vert \leq \ell  \leq  \frac{3}{4}\vert\log \gamma\vert$;
\item\label{conc:in cpct}  $g_{t+\ell }h_{s} \omega \in \mathcal{K}$; 
 \item\label{conc:growth 2} $  \max_{s'\in [-1,1]} \Vert \phi_{t+\ell,s}(s')  \Vert_{\mathbb{Z}}\geq
 \Vert \phi_{t+\ell,s}(0)  \Vert_{\mathbb{Z}} >e^{\tau \ell} (1-\kappa)  \gamma$;
\end{enumerate}
\end{cor}
Note that by our assumption on $\ell(s_0)$, Conclusion \ref{conc:growth} implies that there exists $0<\rho<1$ so that : for all $s \in \CalS_{t,s_0}$ and for $\ell=\ell (s)$, we have 
\begin{multline}\label{eq:growth different form} 
 \Vert KZ(g_{\ell},g_th_{s}\omega)\phi(s)\Vert_{\mathbb{Z}}> (1-\kappa)\frac{\gamma^{\rho}}{\gamma} \Vert KZ(g_t, h_{s}\omega)\phi(s)\Vert_{\mathbb{Z}}\\
 \geq (1-\kappa)^2(\Vert KZ(g_t, h_{s}\omega)\phi(s)\Vert_{\mathbb{Z}})^\rho.
\end{multline}

Before beginning the proof of Proposition~\ref{prop:better}, we reduce lower estimates on the distance to the integer lattice to lower bounds on the norm of vectors in the cohomology bundle over a fixed compact set.

\begin{lem} \label{lem:dist_integers} For any compact subset $\mathcal K$ of the moduli space
there exists a constant $\gamma'_{\mathcal K}>0$ such that for $0<\gamma <\gamma'_{\mathcal K}$ the following holds. 
Let $\phi: [-1, 1] \to H^1(M, \mathbb R)$ be a $\gamma$-Lipschitz horocycle section at $\omega$. If  $g_{\ell}  \omega \in \mathcal K$ and 
$\max_{s\in [-1,1]}  \Vert \phi(s)\Vert_{\mathbb Z }\leq  \gamma$, then there exists a parallel section $z(s) \in H^1_{h_s \omega} (M,\mathbb Z)$ such that, for all $\ell \leq 3\vert \log \gamma\vert /4$ and for all  $s \in  [-1, 1]$ we have
 $$
 \Vert KZ(g_{\ell}, h_{ s}\omega) \phi(s)\Vert_{\mathbb Z}   = 
 \vert  KZ(g_{\ell}, h_{ s}\omega) (\phi(s) -z(s)) \vert \,.
 $$
\end{lem}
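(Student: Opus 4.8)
The plan is to show that the nearest integer cohomology class to $\phi(s)$ is the same for every $s\in[-1,1]$ (when classes are compared along the horocycle via parallel transport) and that applying $KZ(g_{\ell},\cdot)$, for $\ell$ in the allowed range, does not move it. Throughout, $\Vert v\Vert_{\mathbb Z}$ denotes the Hodge-norm distance from a class $v\in H^1_\eta(M,\mathbb R)$ to the lattice $H^1(M,\mathbb Z)$; recall that the Kontsevich--Zorich cocycle $KZ(g,\eta)$ and the horocycle parallel transport $\pi_\omega(s)=KZ(h_{-s},h_s\omega)$ both preserve this lattice.

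First I would fix the geometric constants. For a compact set $\mathcal C\subset\mathcal H$ let $\mathrm{sys}(\mathcal C)>0$ be the infimum over $\eta\in\mathcal C$ of the Hodge norm of the shortest nonzero class of $H^1(M,\mathbb Z)$; this is positive by compactness and continuity of the Hodge norm. Since $g_\ell\omega\in\mathcal K$ for $\ell\le\tfrac{3}{4}|\log\gamma|$, and since for $|s|\le1$ the hyperbolic distance of $h_s$ from the identity is at most a universal constant $C_0$, left-invariance of the hyperbolic distance on $SL(2,\mathbb R)/SO(2)$ shows that $\omega$, every $h_s\omega$, and every $g_{\ell'}h_s\omega$ with $0\le\ell'\le\ell$ lie within hyperbolic distance $\tfrac{3}{4}|\log\gamma|+2C_0$ of $\mathcal K$. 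Letting $\mathcal K'$ be a fixed compact set containing the closed $C_0$-neighborhood of $\mathcal K$, Lemma~\ref{lem:KZbound} then gives: (i) the shortest nonzero integer class at any of these surfaces has Hodge norm at least $e^{-2C_0}\mathrm{sys}(\mathcal K')\,\gamma^{3/4}=:c_{\mathcal K}\,\gamma^{3/4}$; (ii) $\pi_\omega(s)^{\pm1}$ have operator norm at most $e^{C_0}$; (iii) $\Vert KZ(g_{\ell'},\eta)\Vert\le e^{\ell'}\le\gamma^{-3/4}$ for $\ell'\le\tfrac{3}{4}|\log\gamma|$; and, crucially, (iv) since $g_\ell h_s\omega$ lies within distance $C_0$ of $g_\ell\omega\in\mathcal K$, the shortest nonzero integer class at $g_\ell h_s\omega$ has Hodge norm at least the \emph{absolute} constant $\mathrm{sys}(\mathcal K')$.

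Next I would build $z$ and verify the identity. By hypothesis $\Vert\phi(s)\Vert_{\mathbb Z}\le\gamma$, so after parallel transport $\Vert\phi_\omega(s)\Vert_{\mathbb Z}\le e^{C_0}\gamma$ in the Hodge norm at $\omega$, while $|\phi_\omega(s)-\phi_\omega(s')|\le2\gamma$ since $\phi_\omega$ is $\gamma$-Lipschitz on $[-1,1]$. If $z_\omega(s)$ is a nearest lattice point to $\phi_\omega(s)$ in the $\omega$ norm, then $|z_\omega(s)-z_\omega(s')|\le(2e^{C_0}+2)\gamma$, which is strictly below $c_{\mathcal K}\gamma^{3/4}$ once $\gamma^{1/4}<c_{\mathcal K}/(2e^{C_0}+2)$; hence $z_\omega(s)\equiv z$ is independent of $s$, and (comparing any two lattice points within $e^{C_0}\gamma$ of $\phi_\omega(s)$) it is the unique nearest lattice point. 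Set $z(s):=\pi_\omega(s)^{-1}z$, a parallel integer section; transporting back and using (i) at $h_s\omega$ shows $z(s)$ is also the unique nearest lattice point to $\phi(s)$ in the $h_s\omega$ norm, so $\Vert\phi(s)\Vert_{\mathbb Z}=|\phi(s)-z(s)|$. Now fix $\ell\le\tfrac{3}{4}|\log\gamma|$ with $g_\ell\omega\in\mathcal K$ and fix $s$. Since $KZ(g_\ell,h_s\omega)$ preserves the lattice, $KZ(g_\ell,h_s\omega)z(s)$ is a lattice point at $g_\ell h_s\omega$, and by (iii) its distance to $KZ(g_\ell,h_s\omega)\phi(s)$ is $|KZ(g_\ell,h_s\omega)(\phi(s)-z(s))|\le e^\ell\gamma\le\gamma^{1/4}$. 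By (iv), once $\gamma^{1/4}<\mathrm{sys}(\mathcal K')/2$ no other lattice point can be as close, since the difference of two such points would be a nonzero integer class at $g_\ell h_s\omega$ of Hodge norm at most $2\gamma^{1/4}$. Hence $KZ(g_\ell,h_s\omega)z(s)$ is the nearest lattice point, i.e. $\Vert KZ(g_\ell,h_s\omega)\phi(s)\Vert_{\mathbb Z}=|KZ(g_\ell,h_s\omega)(\phi(s)-z(s))|$. Taking $\gamma'_{\mathcal K}$ to be the smallest of the finitely many smallness thresholds used completes the argument; note that $z$ was constructed using only the largest allowed $\ell$, so the same $z$ works for all admissible $\ell$.

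The only real difficulty is bookkeeping: neither the horocycle parallel transport nor the KZ cocycle is a Hodge isometry, so one must simultaneously control how much they distort $\phi$ and the integer lattice. The estimates are not tight but they close because of the scales involved: at $\omega$ the systole may be as small as order $\gamma^{3/4}$ while the quantities forced to beat it are of order $\gamma$, and $\gamma\ll\gamma^{3/4}$; after flowing to $g_\ell h_s\omega\in\mathcal K'$ the systole is bounded below by an absolute constant while the relevant error has grown only to order $\gamma^{1/4}$, and the hypothesis $\ell\le\tfrac{3}{4}|\log\gamma|$ is precisely what keeps $e^\ell\gamma\le\gamma^{1/4}\to0$.
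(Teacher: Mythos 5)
Most of your argument tracks the paper's proof: you pick the nearest lattice point at the basepoint, transport it as a parallel integer section $z(s)$, bound the displacement after applying $KZ(g_\ell,h_s\omega)$ by $e^{\ell}\cdot O(\gamma)\le O(\gamma^{1/4})$, and then try to conclude by a systole comparison (your uniqueness argument at $\omega$, using a systole lower bound of order $\gamma^{3/4}$ there, is a harmless refinement of the paper's choice of $z$). The genuine gap is your step (iv). It is not true that $g_\ell h_s\omega$ lies within bounded hyperbolic distance of $g_\ell\omega$ in the sense relevant to Hodge norms: the Hodge norm at $g\omega$ depends only on the underlying Riemann surface, hence on the image of $g$ in $SO(2)\backslash SL(2,\mathbb R)$, not on the coset $g\,SO(2)$ that your left-invariance computation compares. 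The element carrying $g_\ell\omega$ to $g_\ell h_s\omega$ is $g_\ell h_s g_{-\ell}=h_{se^{2\ell}}$, so the only available comparison of Hodge norms (and hence of lattice systoles) at these two surfaces is through $KZ(h_{se^{2\ell}},g_\ell\omega)$, which Lemma~\ref{lem:KZbound} bounds only by $e^{\mathrm{dist}(h_{se^{2\ell}},\mathrm{Id})}\asymp e^{4\ell}s^{2}$, a negative power of $\gamma$ rather than $e^{C_0}$. Geometrically, the points $g_\ell h_s\omega$, $s\in[-1,1]$, sweep a horocycle arc of length $2e^{2\ell}$ through $g_\ell\omega$ (this expansion is the basic mechanism of the whole paper); such surfaces can lie deep in the thin part and carry nonzero integral classes of Hodge norm far smaller than $\mathrm{sys}(\mathcal K')$ even though $g_\ell\omega\in\mathcal K$. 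Since (iv) is precisely what you invoke to rule out a lattice point closer to $KZ(g_\ell,h_s\omega)\phi(s)$ than $KZ(g_\ell,h_s\omega)z(s)$, the concluding identity is not established.

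For comparison, the paper's proof makes the same transport-and-growth estimate (obtaining $20\gamma^{1/4}$) and then compares with the minimal Hodge length $\delta_{\mathcal K}$ of nonzero integer classes over $\mathcal K$; the systole lower bound it uses is the one available at the surface that the compactness hypothesis actually supplies, and in the application (proof of Proposition~\ref{prop:better}) the hypothesis fed into the lemma is that $g_\ell h_{s'}(g_th_{s_0}\omega)\in\mathcal K$ at the relevant parameter $s'$, which provides the needed bound at the endpoint surface where the nearest-point comparison is performed. That bound cannot be recovered, as you attempt, from $g_\ell\omega\in\mathcal K$ via a bounded-distance argument; to repair your write-up you would need to place the compactness at $g_\ell h_s\omega$ for the parameters $s$ at which the identity is asserted (which is how the lemma is used downstream), or otherwise produce an endpoint systole bound beating a fixed multiple of $\gamma^{1/4}$. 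The same left/right-coset confusion also appears in your preliminary claim that all intermediate points $g_{\ell'}h_s\omega$ stay within $\tfrac34|\log\gamma|+2C_0$ of $\mathcal K$, but there it is harmless since you only use the systole at $\omega$ and at $h_s\omega$, where your distance bounds are correct.
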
 
\begin{proof}
We remark that over any compact set $\mathcal K$ the Hodge length of the shortest vector  of the integer lattice $H^1(M, \mathbb Z)$ has a positive minimum $\delta_{\mathcal K}>0$.  
 Since, by hypothesis, $\max \Vert  \phi(s)\Vert_{\mathbb Z } \leq \gamma$ it follows that 
there exists $z \in H^1_{\omega}(M, \mathbb Z)$  such that
 $$
 \vert \phi(0) -z  \vert =  \max_{s\in [-1,1]} \Vert  \phi(s)\Vert_{\mathbb Z } \leq   \gamma\,.
 $$
 Let $z(s) \in H^1_{h_{s}\omega}(M, \mathbb Z)$  denote the section given by the parallel transport of $z
 \in H^1_{h_{s_0}\omega}(M, \mathbb Z)$, that is, the section such that $\pi_\omega(s) z(s)=z$ for all 
 $s\in [-1,1]$. Similarly, let $\bar \phi(s) \in H^1_{h_{s}\omega}(M, \mathbb R)$ denote
 the parallel transport of the vector $\phi(0)$, that is, the section  $\bar \phi(s)$ such that
 $\pi_\omega(s) \bar \phi(s) = \phi(0)$, for all $s\in [-1,1]$. 
 
 Since  for all $s \in  [-1, 1]$ the vector
  $ \bar \phi(s) -z(s)$ is obtained from the vector $\phi(s_0) -z$  
  by parallel transport along a horocycle of length at most $2$,  for all $s \in  [-1, 1]$ we have
 $$
 \vert  \bar \phi(s) -z(s)   \vert  \leq  10 \gamma\,,
 $$
 and since by hypothesis $\phi$ is a $\gamma$-Lipschitz section
 it follows that 
 $$
 \vert \phi(s) -\bar \phi(s)  \vert  \leq 10 \gamma \,.
 $$
 We remark that since $\ell \leq \vert  \log \gamma\vert /2$ we have that $\Vert  KZ (g_\ell, \omega) \Vert \leq   \gamma^{-3/4}$.  Hence
$$
 \vert KZ(g_{\ell}, h_{s}\omega) (\phi(s) -z(s))  \vert  \leq  20 \gamma^{1/4}    \,.
 $$
Thus it suffices to choose $\gamma'_{\mathcal K}$ so that $(\gamma'_{\mathcal K})^{1/4}< \delta_{\mathcal K}/40$.
\end{proof}

As a consequence, under the hypotheses of the above lemma, a lower bound on $\vert KZ(g_{\ell}, h_{s}\omega)
(\phi(s) -z(s) )\vert $ is equivalent  to a lower bound on $\Vert KZ(g_{\ell}, h_{s}\omega) \phi(s) \Vert_{\mathbb Z} $
and up to replacing $\phi(s)$ with $\phi(s) -z(s)$ we can estimate $\vert KZ(g_{\ell}, h_{s}\omega) \phi(s) \vert$
from below.

Proposition \ref{prop:better} follows from the next lemma, whose proof we defer until after the proposition's proof.

For any $ \epsilon, \delta>0$, let $\eta:=\eta(\epsilon, \delta)>0$ and $\mu:=\mu(\epsilon, \delta)>0$ be the constants
defined in formula \eqref{eq:eta/kappa}. 

\begin{lem} \label{lem:key step}  Let $T_0>0$ be sufficiently large and so that $e^{T_0} \in \mathbb{N}$
and let  $T > T_0$.  Let $\kappa \in (0,\kappa_0)$ and let $\psi: [-1,1] \to H^1(M, \mathbb R)$ be a horocycle section 
at $\omega \in \mathcal K$ such that $\psi$ is projectively $\kappa $-Lipschitz. 
For all $j \in \{0, \dots,  \lfloor \frac T {T_0} \rfloor-1\}$, for $s\in [-1,1]$,  let 
$$
\Phi_{jT_0}(s) := KZ(g_{jT_0}, h_s\omega)\psi (s)\,.
$$
 If $\epsilon, \delta>0$ are small enough there exists $\sigma'>0$  such that 
 \begin{multline}\label{eq:better lem eq}
\text{\rm Leb}\Big(\{s\in [-1,1]:  \# \{j \in \{0,\dots, \lfloor  \frac T {T_0} \rfloor-1\}:   \\
\vert \Phi_{(j+1)T_0}(s)  \vert  \geq 
e^{(\mu-\epsilon) T_0}  \vert \Phi_{jT_0}(s) \vert \}
\\ >(1-16\eta)  \lfloor \frac T {T_0} \rfloor \}\Big)>2(1-e^{-\sigma' T})\,.
\end{multline}
\end{lem}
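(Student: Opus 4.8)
The plan is to run a large deviations argument over the $M:=\lfloor T/T_0\rfloor$ blocks of geodesic time of length $T_0$: the growth input for a single block will come from Lemma~\ref{lem:Zgrowth from V}, the control of how often the sampled orbit lies in $\hat V_\epsilon$ will come from Corollary~\ref{cor:all together}, part~\ref{c:large dev} (equivalently Proposition~\ref{prop:in good integer}), and the passage from per-block estimates to an exponential bound will come from Lemma~\ref{lem:comparison} and Corollary~\ref{cor:large deviations}. I will take $T_0$ large enough that $e^{T_0}\in\mathbb N$ and that $T_0$ exceeds the thresholds $T_3,t_0$ of Lemma~\ref{lem:Zgrowth from V} and the thresholds of Corollary~\ref{cor:all together}, and I assume, as in Proposition~\ref{prop:better}, that $\psi$ maps into $\mathcal F\subset\FF$ and is projectively $\kappa$-Lipschitz at $\omega$ with $\kappa<\kappa_0$. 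Call block $j\in\{0,\dots,M-1\}$ \emph{growth-good for $s$} if $g_{jT_0}h_s\omega\notin\hat V_\epsilon$ or $\vert\Phi_{(j+1)T_0}(s)\vert\ge e^{\mu T_0}\vert\Phi_{jT_0}(s)\vert$. Since $e^{\mu T_0}\ge e^{(\mu-\epsilon)T_0}$, whenever block $j$ is growth-good \emph{and} $g_{jT_0}h_s\omega\in\hat V_\epsilon$ the $j$-th factor in \eqref{eq:better lem eq} occurs; so it suffices to bound, off an exponentially small set of $s$, the number of $j$ that fail to be growth-good and, separately, the number of $j$ with $g_{jT_0}h_s\omega\notin\hat V_\epsilon$.

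For the per-block growth I would fix $j\ge 1$ and, for each $s_0$ with $g_{jT_0}h_{s_0}\omega\in\hat V_\epsilon$, apply Lemma~\ref{lem:Zgrowth from V} with base point $\omega$, interval $[-1,1]$, section $\phi=\psi$, ``$t$'' $=jT_0>t_0$ and ``$T$'' $=T_0>T_3$; since $\Phi_t(s)=KZ(g_t,h_s\omega)\psi(s)$, this is exactly a comparison of $\Phi_{(j+1)T_0}$ with $\Phi_{jT_0}$, and the lemma gives that the subset of $s\in[s_0-e^{-2jT_0},s_0+e^{-2jT_0}]$ with $\vert\Phi_{(j+1)T_0}(s)\vert<e^{\mu T_0}\vert\Phi_{jT_0}(s)\vert$ has measure less than $\eta\,e^{-2jT_0}$. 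A routine covering argument over a bounded-overlap net of such centres $s_0$ (as in the proof of Lemma~\ref{lem:Zgrowth from V}) then bounds by $C_0\eta$, with $C_0$ an absolute constant, the measure of the set of $s$ for which $g_{jT_0}h_s\omega\in\hat V_\epsilon$ but block $j$ is not growth-good. The crucial refinement I would then need is that this bound holds \emph{conditionally}: restricted to any interval $I$ on which the outcomes of blocks $0,\dots,j-1$ (``growth-good or not'' and ``in $\hat V_\epsilon$ or not'') are determined, the relative measure in $I$ of the $s$ with $g_{jT_0}h_s\omega\in\hat V_\epsilon$ that fail block $j$ is still $\le C_0\eta$, up to absorbing small transition sets into a fixed finite memory $k$. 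Here I would use Lemma~\ref{lem:stay lip}: each $\Phi_{iT_0}$, $i\le j-1$, is Lipschitz in the rescaled parameter at scale $e^{-2(i+1)T_0}\ge e^{-2jT_0}$, so the earlier outcomes are essentially constant on the scale-$e^{-2jT_0}$ windows in which Lemma~\ref{lem:Zgrowth from V} operates.

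With this in hand I would set $F_j(s)=1$ when block $j$ is growth-good for $s$; by the previous paragraph the conditional probability that $F_j=1$ given $F_1,\dots,F_{j-k}$ is at least $\rho:=1-C_0\eta$, so Lemma~\ref{lem:comparison} and Corollary~\ref{cor:large deviations} produce $\sigma_1>0$ with $\text{\rm Leb}(\{s:\#\{j:F_j(s)=0\}>2C_0\eta M\})\le e^{-\sigma_1 M}$. Independently, since $\omega$ lies in the fixed compact set $\mathcal K$, the covering bound of Proposition~\ref{prop:in good integer} has $C(\omega)=O(1)$, so the argument behind Corollary~\ref{cor:all together}, part~\ref{c:large dev}, applied at sampling time $T_0$ (using $e^{T_0}\in\mathbb N$), yields $\xi\in(0,1)$ with $\text{\rm Leb}(\{s:\#\{j:g_{jT_0}h_s\omega\notin\hat V_\epsilon\}>\epsilon M\})\le\xi^M$. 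Off the union of these two exceptional sets, using $\epsilon<\eta$ for $\epsilon,\delta$ small and $M$ large enough that block $0$ is absorbed, for at least $M-2C_0\eta M-\epsilon M-1>(1-16\eta)M$ indices $j$ one has $g_{jT_0}h_s\omega\in\hat V_\epsilon$ and $\vert\Phi_{(j+1)T_0}(s)\vert\ge e^{\mu T_0}\vert\Phi_{jT_0}(s)\vert\ge e^{(\mu-\epsilon)T_0}\vert\Phi_{jT_0}(s)\vert$, provided $\epsilon,\delta$ are chosen so small that the covering and memory losses fit under the constant $16$. Since $T_0$ is fixed and $M\ge T/T_0-1$, both $e^{-\sigma_1 M}$ and $\xi^M$ are at most $\tfrac12 e^{-\sigma'T}$ for a suitable $\sigma'>0$, which yields the bound $2(1-e^{-\sigma'T})$ of \eqref{eq:better lem eq}.

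The step I expect to be the main obstacle is the conditional estimate in the second paragraph: making precise that, given the outcomes of all earlier blocks, block $j$ is growth-good with probability $\ge 1-O(\eta)$. This forces one to reconcile the scale $e^{-2jT_0}$ at which Lemma~\ref{lem:Zgrowth from V} delivers growth with the coarser scale at which the earlier blocks' outcomes are (essentially) determined, and to control the transition sets where $\vert\Phi_{(j+1)T_0}(s)\vert$ hovers near $e^{\mu T_0}\vert\Phi_{jT_0}(s)\vert$ — this is precisely what necessitates the finite memory $k$ and the slack in the success probability $\rho$ when invoking Corollary~\ref{cor:large deviations}.
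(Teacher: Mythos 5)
Your overall strategy is the same as the paper's (per-block growth from Lemma~\ref{lem:Zgrowth from V}, visit counts for $\hat V_\epsilon$ via Proposition~\ref{prop:in good integer}/Corollary~\ref{cor:all together}\ref{c:large dev}, and the conditional large-deviation machinery of Lemma~\ref{lem:comparison} and Corollary~\ref{cor:large deviations}), but the step you yourself flag as the main obstacle --- the conditional estimate --- is a genuine gap, and the devices you propose to close it do not suffice as stated. Lemma~\ref{lem:comparison} requires the success probability of $F_j$ to be at least $\rho$ conditioned on \emph{every} atom of the $\sigma$-algebra generated by the earlier variables (up to the memory gap). Your block events are defined pointwise in $s$ (``growth at $s$ exceeds $e^{\mu T_0}$'', ``$g_{jT_0}h_s\omega\in\hat V_\epsilon$''), so the atoms of that $\sigma$-algebra are arbitrary measurable sets, not unions of windows of scale $e^{-2jT_0}$; on a ``transition'' atom where an earlier ratio hovers at its threshold, or where $h_s\omega$ sits near $\partial \hat V_\epsilon$, the conditional failure probability at block $j$ can be close to $1$ even though such atoms have small total measure. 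Neither the finite memory $k$ (dropping the most recent blocks does not make the remaining pointwise events coarse-scale measurable) nor ``slack in $\rho$'' (a worse constant $C_0\eta$ is still a bound that must hold on every atom) addresses this, and Lemma~\ref{lem:stay lip} only gives that earlier sections are \emph{nearly} constant on small windows, which is exactly not enough at the threshold.

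The paper's resolution, which is the actual content of the proof, is to replace the pointwise events by events measurable with respect to the nested partitions $\mathcal{P}^{(j)}$ of $[-1,1]$ into intervals of length $e^{-2jT_0}$: the bad event $B_j$ is ``some $s'$ in the cell $P^{(j)}(s)$ has ratio $\leq e^{(\mu-\epsilon)T_0}$'' and the good event $\mathcal{G}_j$ is ``some $s'$ in $P^{(j+1)}(s)$ lies in $V_\epsilon$''. The two thresholds $e^{(\mu-\epsilon)T_0}< e^{\mu T_0}$ are then reconciled by the comparison estimate~\eqref{eq:same size}, proved via Lemma~\ref{lem:KZbound} and the projective Lipschitz property over the compact set, which says that within a cell the norms $|\Phi_{jT_0}|$ change by a factor at most $1+2e^{-2T_0}$; this is what lets the window estimate~\eqref{eq:useful growth} (at threshold $e^{\mu T_0}$) control the measure of the cell-wise event $B_j$ (at threshold $e^{(\mu-\epsilon)T_0}$), and since $\mathcal{I}_j(s)$ is constant on cells of $\mathcal{P}^{(j)}$ the conditional bound~\eqref{eq:cond good} is literal, with the one-step memory gap ($k<j-1$) supplying the parameter $k$ in Corollary~\ref{cor:large deviations}. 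Without constructing these partition-adapted events and the threshold-slack comparison, your covering bound only yields an unconditional per-block estimate, from which exponential decay in $T$ does not follow; so the proposal, while on the right track, is missing the key idea of the proof rather than merely its details.
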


\begin{rem} Let $\mathcal{F}\subset \FF$ be an equivariant subbundle, where the Kontsevich-Zorich cocycle acts strongly irreducibly with a positive exponent, and let $\lambda$ be the largest such exponent. Let $v:[-1,1]\to \mathcal{F}$ be a parallel horocycle section at $\omega$. By the above lemma and Lemma \ref{lem:KZbound} one has that for any $\epsilon'>0$ there exists $ c>0$ and $T_1\in \mathbb{R}$ so that for all $T>T_1$ we have 
$$\lambda(\{s\in [-1,1]:|KZ(g_{ T },h_s\omega)v(s)|<e^{(\lambda-\epsilon') T}\})<e^{-cT}.$$
More generally, one can prove such a  large deviation result for any Lipschitz horocycle section $\psi:[-1,1]\to \mathcal{F}$. Indeed by Proposition~\ref{prop:in good integer} (applying it to a function $f_\emptyset$) there exists a compact set $\mathcal{K}$ and a constant $c$ so that the measure of the set of $s\in [-1,1]$ so that $|\{0\leq t\leq T:g_th_s \omega \notin \mathcal{K}\}|<cT$ decays exponentially with $T$. (Indeed one can choose $\rho$ so big that if $g_t\omega \in \mathfrak{C}_{\rho_0}$ then $g_{t+\tau}\omega \in \mathfrak{C}_\rho=\mathcal{K}$ for all $0\leq \tau \leq t_0$.)  By  Lemma  \ref{lem:stay_proj_lip}, the transported horocycle section $KZ(g_T)(\psi)$ at such an $s\in [-1,1]$ is 
$\kappa_T$-projectively Lipschitz with a constant $\kappa_T$ which decays exponentially with $T$.  
\end{rem}

\begin{proof}[Proof of Proposition \ref{prop:better} assuming~Lemma \ref{lem:key step}]   For all $c, \gamma >0$,  
 let  $\mathcal B:= \mathcal B(c, \gamma)$ denote the set of $s \in [-1,1]$ 
such that for all $\ell \in [\frac 1 2 |\log \gamma|,(\frac 1 2 +c)|\log \gamma|]$  we have   $g_{\ell} h_sg_{t}h_{s_0} \omega \not\in \mathcal{K}$.
We first prove that for any $c>0$, and for $\gamma>0$ sufficiently small, the Lebesgue measure of  $\mathcal B$  is polynomially small in $\gamma>0$. 

To prove an upper bound on the measure of $\mathcal B$ we apply Corollary~\ref{cor:ath} with $S=\frac 1 2 |\log \gamma|$ and  $T:=c|\log \gamma|$ sufficiently large to derive  that there exists $\zeta\in (0,1)$ such that
$$\text{Leb}(\mathcal B) \leq \zeta^{c |\log \gamma|} =  \gamma^{c  |\log \zeta|} .$$
 
Let $\psi=\phi_{t,s_0}$. 
By the hypotheses of the Proposition the horocycle section $\phi_{t,s_0}$ at $g_th_{s_0}\omega$,  defined in formula~\eqref{eq:phi_t_s_0}, satisfies the hypotheses of
Lemma~\ref{lem:key step}.   Indeed, by the hypotheses (c) and (d) of the  Proposition, the  section $\phi_{t,s_0}$  is $\kappa$-projectively Lipschitz at $h_{s_0}\omega$ with  $\kappa \in (0, \kappa_0)$.   

Let $T_0>0$ be sufficiently large and so that $e^{T_0} \in \mathbb{N}$ and let $\gamma >0$ such that $T:=\frac 1 2 \vert \log \gamma\vert > T_0$.   As in 
Lemma \ref{lem:key step}, we partition the interval $[0, \frac 1 2 \vert \log \gamma\vert]$  into $R:= \lfloor  \frac {\vert \log \gamma\vert} 2    \frac 1 {T_0} \rfloor$
equal intervals of length $T_0>0$ and an additional interval of length at most $T_0$.   
Let $\mathcal G$ be the set of formula~\eqref{eq:better lem eq} in the statement of the Lemma~
\ref{lem:key step} applied to the horocycle section $\phi_{t,s_0}$, and let $\CalS_{t,s_0}$ be defined as 
$$\mathcal S_{t,s_0}:= \{ s_0 + s' e^{-2t} \vert s' \in \mathcal G\setminus \mathcal B\}\,.$$
Lemma \ref{lem:key step} and the previous paragraph establishes  the measure lower bound of formula~\eqref{eq:meas lost}, for any $\sigma <  \min (\sigma', c  |\log \zeta|)$.

Let us  assume that $s \in \CalS_{t,s_0} \subset [s_0-e^{-2t}, s_0+ e^{2t}]$, hence by definition
$e^{2t} (s-s_0) \not \in \mathcal B$. By the above definition of the set $\mathcal B$, there  exists $$\ell_s \in [\frac 1 2 |\log \gamma|,(\frac 1 2 +c)|\log \gamma|]$$ 
so that $ g_{t+\ell_s} h_s \omega = g_{\ell_s} h_{e^{2t}(s-s_0)} g_t h_{s_0} \omega \in \mathcal{K}$. 

Clearly conditions~\ref{conc:time} and~\ref{conc:in cpct} are satisfied for $\ell:= \ell_s$. 
  It remains to verify Condition~\ref{conc:growth}. 
Since $e^{2t} (s-s_0)\in \mathcal G$, by formula~\eqref{eq:better lem eq} and Lemma~\ref{lem:KZbound} there exists  $\mu':= \mu-16\eta-\epsilon~>~0$ such that, for all $s\in [-1,1]$, 
$$
\begin{aligned}
 | KZ(&g_{RT_0}, g_th_{s}\omega)\phi(s)  | = | KZ(g_{RT_0}, h_{e^{2t} (s-s_0) }  g_th_{s_0}\omega)\phi_{t,s_0}(e^{2t} (s-s_0))  | \\ & = | \Phi_{RT_0}(e^{2t} (s-s_0)) |    \geq   e^{\mu' T_0 R}  |\phi_{t,s_0}(e^{2t} (s-s_0))  | =  e^{\mu' T_0 R}  |\phi(s)  |   \,.
\end{aligned} 
$$
and because the Hodge norm can change by a factor of at most $ e^{\pm (c |\log \gamma| +T_0)}$ from time 
$RT_0= \lfloor \frac{\vert \log \gamma\vert}2 \frac 1 {T_0} \rfloor  T_0  $   to time $\ell_s\leq (\frac 1 2  +c) |\log \gamma|$, for $c>0$ sufficiently small there exists $\tau >0$ such that
$$
| KZ(g_{\ell}, g_th_{s }\omega) \phi(s) |    \geq    e ^{ \tau \ell_s}    |  \phi_{t,s_0} (e^{2t}(s-s_0))| \,.
$$
Since, by our assumption on $s$,  $g_{\ell}  h_{e^{2t}(s-s_0)} g_t h_{s_0} \omega \in \mathcal K$,  by \ref{cond:lip transport} the horocycle section $\phi_{t,s_0}$ is $\gamma$--Lipschitz at $h_{e^{2t}(s-s_0)} g_t h_{s_0} \omega$   and  by \ref{cond:max size} 
\begin{equation}
\label{eq:max=gamma}
\max_{s'\in [-1, 1]} \Vert     \phi_{t,s_0}(s')   
\Vert_{\mathbb Z} =\gamma\,,
\end{equation} 
 by Lemma~\ref{lem:dist_integers}, there exists a parallel section $z_{t,s_0}: [-1,1] \to H^1(M, \mathbb Z)$
such that $z_{t,s_0}(s') \in H^1_{h_{s'} g_th_{s_0}\omega}(M, \mathbb Z)$ with
\begin{multline}
\Vert  KZ(g_{\ell}, h_{s'} g_th_{s_0}\omega) \phi_{t,s_0}(s')   \Vert_{\mathbb Z} \\ =
\vert KZ(g_{\ell}, h_{s'} g_t h_{s_0 }\omega)  (\phi_{t,s_0}(s')  -z_{t,s_0}(s')) \vert \,.
\end{multline}
In fact,  we can apply Lemma~\ref{lem:dist_integers} to the section $\phi_{t,s_0}$ to get a parallel integer section $z_{t,s_0}: [-1,1]\to  H^1(M, \mathbb Z)$ at  $g_t h_{s_0} \omega$ such that the above identity holds.
 By applying the above argument to the curve $\phi_{t,s_0}-z_{t,s_0}$ we therefore conclude that, for 
 $s\in \CalS_{t,s_0}$, 
$$
\Vert \phi_{t+\ell, s} (0)   \Vert_{\mathbb Z}=  \Vert KZ(g_{\ell}, g_t h_{ s }\omega) \phi_{t,s_0}(s)  (s )\Vert_{\mathbb Z}    \geq    
 e ^{ \tau \ell_s}   \Vert   \phi_{t,s_0}(s)  \Vert_{\mathbb Z}\,.
$$
Finally since by hypothesis the section $\phi _{t,s_0}$
is $\kappa\gamma$-Lipschitz and by formula~\eqref{eq:max=gamma}, we have the lower bound
$$
\Vert \phi_{t+\ell, s} (0)   \Vert_{\mathbb Z} \geq \min_{s'\in [-1,1]} \Vert   \phi_{t,s_0} (s') \Vert_{\mathbb Z} \geq
(1-\kappa) \gamma\,,
$$
which completes the argument.
\end{proof}

\begin{proof}[Proof of Lemma \ref{lem:key step}]
Let us recall that by definition, for $s\in [-1,1]$, 
$$
\Phi_{jT_0}(s) := KZ(g_{jT_0}, h_s\omega)\phi(s)\,.
$$
We say that the pair $(j,s)$  is   \emph{good} if  $g_{jT_0}h_s\omega\in V_\epsilon. $
 Observe that if $(j,s)$  is good then, since by hypothesis the horocycle section $\phi$ is projectively $\kappa$-Lipschitz, by Lemma \ref{lem:Zgrowth from V} for sufficiently large $T_0>0$, we have

 \begin{multline}\label{eq:useful growth} 
\text{\rm Leb} \Big(\Big\{s'\in [s-e^{-2jT_0 },s +e^{-2jT_0)}]: |\Phi_{(j+1)T_0, s_0}(s') | \\ <  e^{\mu T_0} |\Phi_{jT_0, s_0}(s') )| \Big\}\Big)< \eta e^{-2jT_0}. 
\end{multline}

We now wish to use this estimate and Corollary \ref{cor:large deviations} to complete the proof of the lemma. To satisfy the assumptions of Corollary \ref{cor:large deviations} we define a sequence of  nested partitions
{of the interval $[-1,1]$:} 
 $$\mathcal{P}^{(j)}=\{[ i e^{-2j {T_0}} , (i+1)e^{-2j {T_0}}]\}_{i=-  e^{2 j {T_0}}  }^{ e^{2 j {T_0}}  -1}.$$ 
   
 Let $P^{(j)}(s)$ be the unique element of $\mathcal{P}^{(j)}$ such that $s \in P^{(j)}(s)$  and 
{ $P^{(j)}_i=
 [ i e^{-2 j {T_0}},(i+1)e^{-2 j {T_0}}]$.}
 
 We claim that by the remark preceding the statement of Lemma~\ref{lem:key step}, 
 if  $g_{j{T_0}}h_s\omega \in \mathcal{K}$ then, for any $s'\in P^{(j+1)}(s)$, 
\begin{equation}\label{eq:same size}
(1+ 2e^{-2T_0})^{-1} \leq \frac{|\Phi_{jT_0}(s)|} {|\Phi_{jT_0}(s')|} \leq (1+ 2e^{-2T_0}) \,.
\end{equation} 
In fact, since  the Teichm\"uller distance between $g_{j{T_0}}h_{s}\omega$
and $g_{j{T_0}}h_{s'}\omega$ is at most $e^{-2T_0}$, there exists $g\in SL(2, \mathbb R)$ 
 at a  hyperbolic distance at most $e^{-2T_0}$  from the identity in $SL(2, \mathbb R)/SO(2)$  such that, for any cohomology class $v \in H^1_{h_{s'} \omega}(M, \mathbb R)$, we have
$$
KZ( g_{j{T_0}}, h_{s}\omega)(\pi_{h_s\omega}(v))= KZ(g, g_{j{T_0}} h_{s'}\omega)   KZ( g_{j{T_0}},h_{s'}\omega)(v) \,.
$$
By Lemma~\ref{lem:KZbound}  it follows that
$$
\vert KZ( g_{j{T_0}},h_{s}\omega)\pi_{h_s\omega}(v)\vert \leq \exp (e^{-T_0}) 
\vert KZ( g_{j{T_0}}, h_{s'}\omega)(v) \vert\,,
$$
so that for $v= \phi(s')$ we derive the estimate
$$
 \frac{|KZ( g_{j{T_0}},h_{s}\omega)\pi_{h_{s}\omega} (\phi(s'))|} {|KZ( g_{j {T_0}}, h_{{s'}}\omega)\phi(s')|}  \leq   \exp (e^{-2T_0})\,.
$$
Let $K>0$ denote the Lipschitz constant  of $\phi$ at $\omega$.  Since the section $\phi$ is $\kappa$-projectively Lipschitz at $\omega$, there exists a constant 
$C>0$ such that it is $CK$-Lipschitz and $C\kappa$-projectively Lipschitz at $h_s\omega$.  By Lemma~\ref{lem:KZbound}, we then have 
\begin{multline}
|KZ(g_{j {T_0}},h_{s'}\omega)\phi(s')| \geq  e^{-jT_0}\vert \phi(s')  \vert \\ \geq e^{-jT_0}(\vert \phi(s) \vert - C K ) \geq e^{-jT_0}K (\kappa^{-1} -  C )    \,.
\end{multline}
Since $\Vert KZ(g_{j{T_0}},h_{s}\omega)\Vert \leq e^{jT_0}$ and $\vert s-s' \vert \leq   e^{-2(j+1) T_0}$ for  $s'\in P^{(j+1)}(s)$, we also have
$$
|KZ(g_{j T_0},h_{s}\omega)(\phi (s) -\pi_{ h_{s}\omega} (\phi (s')))|  \leq  CK  e^{-2(j+1) T_0}\,,
$$
so that we have derived the estimate
$$
 \frac{|KZ(g_{j T_0},h_{s}\omega)(\phi (s) -\pi_{ h_{s}\omega} (\phi (s')))| }{|KZ(g_{j T_0},h_{s'}\omega)\phi(s')|} \leq  \frac{  C\kappa e^{-2T_0} } {1-C\kappa  }   \,.
$$
The upper bound in formula~\eqref{eq:same size} then follows from the above estimates, for $\kappa\in (0,\kappa_0)$ and for $T_0>0$ sufficiently large. The lower bound follows by symmetry, hence the claim is proved.

Now  assume $T_0$ is large enough so that $e^{-\epsilon T_0}<(1+e^{-2T_0})^{-1}$ and  let
$$B_j=\{s:  \exists s'  \in {P}^{(j)}(s)\text{ with } \frac{ |\Phi_{(j+1)T_0}(s')|}{   | \Phi_{jT_0}(s') |   }  \leq e^{(\mu-\epsilon)T_0} \leq  e^{ {\mu T_0}}(1+2e^{-2T_0})^{-1} \}\,, $$
let $\mathcal{I}_j(s)=\{k<j-1:s \in B_k\}$ and  
$$
\mathcal{G}_j=\{s: \exists s' \in {P}^{(j+1)}(s)\text{ with }g_{jT_0}h_{s'}\omega 
 \in V_\epsilon\}.
$$
Observe that  \eqref{eq:useful growth} and \eqref{eq:same size} give 

\begin{equation}\label{eq:cond good} 
\begin{aligned} \text{\rm Leb}(\{s \notin B_j: &s\in \mathcal{G}_j \text{ and } \mathcal{I}_j(s)=\vec{v}\}) \\ &>(1-2\eta)\text{\rm Leb}(\{s\in \mathcal{G}_j: \mathcal{I}_j(s)=\vec{v}\}).
\end{aligned} 
\end{equation}
Indeed, by definition $s,s'\in P^{(j)}_k$ for some $k$ implies $\mathcal{I}_j(s)=\mathcal{I}_j(s')$ and \eqref{eq:useful growth} and \eqref{eq:same size} gives the measure estimate conditioned to $s\in \mathcal{G}_j$ (which is equivalent to $s'\in \mathcal{G}_j$). 

This lower bound on the conditional probability provides the large deviations estimate via Corollary \ref{cor:large deviations}. 
Indeed, by applying the corollary to the sequence of random variables $F_j$ equal, for all $j\in \mathbb N$, to the characteristic functions of the sets of  $s \not\in B_j \cap \mathcal{G}_j$, we have that in the complement of a set of measure exponentially small  in $\frac{T}{T_0}$, 
\begin{equation}\label{eq:plenty in}
\#\{0\leq j \leq  \lfloor \frac{T}{T_0}\rfloor :s \notin B_j \,\text{ or } \,s\notin \mathcal{G}_j  \} >(1-8 \eta)  \lfloor \frac{T}{T_0}\rfloor 
\end{equation}
 (for all large enough  $T>T_0$). 
This is because by formula~\eqref{eq:cond good} the sequence of random variables $\{F_j\}$ defined above satisfies the assumptions of Corollary \ref{cor:large deviations}.   
Consider the set of $s\in [-1,1]$ such that 
  \begin{equation}\label{eq:in at integers}
  \vert  \{ j \in \{0,\dots, \lfloor \frac{T}{T_0}\rfloor \} : g_{j T_0} h_s \omega \in V_\epsilon\}\vert > (1-4\eta)  \lfloor \frac{T}{T_0}\rfloor .
  \end{equation}
Note that by~Corollary \ref{cor:all together} \ref{c:large dev} (which we may apply if $T_0$ is at least $t_0$) 
 the complement of this set has that its measure decays exponentially with  $\lfloor \frac{T}{T_0}\rfloor $. 
 The intersection of the sets given by \eqref{eq:plenty in} and \eqref{eq:in at integers} satisfies the desired conditions, hence the proof of  the Proposition is complete. 

\end{proof}

\section{Proof of Theorem \ref{thm:main}}\label{sec:main proof}
\subsection{The Veech criterion} 
\label{ssec:Veech_criterion}
To prove Theorem \ref{thm:main} we need a condition to rule out the flow $F^t_{r_\theta\omega}$ having 
$ \alpha \in \mathbb R$ as an eigenvalue.  This is Lemma~\ref{lem:v crit} below, which is essentially due to Veech. We remark that the second named author has an alternate version of the criterion, where one has a weaker assumption (only one time in the compact set instead of three) and obtains (essentially\footnote{The result of the second author assumes that the eigenvalue has no continuous eigenfunction; the case when the eigenvalue has a continuous eigenfunction has an easier proof following Veech's argument.}) the same conclusion \cite[Theorem 4.12]{FoSurv}.}
Following Veech \cite[\S 7]{veech metric 1} we have the following criterion for a translation flow $F^t_{r_\theta\omega}$ to not have $ \alpha$ as an eigenvalue. 
There is a $c>0$ and a sequence of transversals $\{J_i\}$ so that 
\begin{itemize} 
\item $|J_i|\to 0$.
\item $F_{r_\theta}^s(J_i)$ are disjoint intervals for all $0\leq s\leq \frac c {\ell'(J_i)}$, where $\ell'(J_i)$ denotes the length of the  projection of $J_i$ in the direction ${\theta}$. 
\item If $T$ is the IET given by the first return to $J_i$ and $I_1,\dots,I_d$ are the intervals that define $T$ then $|I_a|>c|I_b|$ for all $a,b$. 
\end{itemize}
Let $r_i$ be the vector of return times of $F^t_{r_\theta \omega}$ to $J_i$. If $\underset{i \to \infty}{\limsup }\,  \|r_i\alpha\|_{\mathbb{Z}}\neq 0$, then $\alpha$ is not an eigenvalue of $F^t_{r_\theta\omega}$. We have the following trivial consequence of this:

\begin{lem}\label{lem:v crit}(Veech) Let $\alpha$  be a nontrivial eigenvalue of $F^t_{\omega}$. For every compact set $\mathcal{K}$ there exists $\ell_{\mathcal{K}}$ so that, if there exist $\ell \geq \ell_{\mathcal{K}}$ and 
a diverging sequence  $(t_i) \subset \mathbb{R}^+$ with the property that
$g_{t_i}\omega \in \mathcal{K}$, $g_{t_i-\ell}\omega \in \mathcal{K}$ and  $g_{t_i+\ell}\omega \in \mathcal{K}$, then 
$$\underset{i \to \infty}{\lim} \|  KZ(g_{t_i},\omega)(\alpha\Im(\omega))\|_{\mathbb{Z}}=0. $$
\end{lem}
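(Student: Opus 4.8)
The plan is to read the transversal form of Veech's criterion recalled above contrapositively. If $e^{2\pi\imath\alpha}$ is an eigenvalue of $F^t_\omega$, then for \emph{any} sequence $\{J_i\}$ of transversals on $\omega$ enjoying the three listed properties (vanishing length; flow boxes $F^s_\omega(J_i)$ disjoint for $0\le s\le c/\ell'(J_i)$; first-return interval exchange $c$-balanced) the return-time vectors $r_i$ must satisfy $\limsup_i\|r_i\alpha\|_{\mathbb Z}=0$, hence $\lim_i\|r_i\alpha\|_{\mathbb Z}=0$. So, given the compact set $\mathcal K$ and a diverging sequence $(t_i)$ with $g_{t_i-\ell}\omega,g_{t_i}\omega,g_{t_i+\ell}\omega\in\mathcal K$ for some fixed $\ell\ge\ell_{\mathcal K}$, it will suffice to build such a sequence $\{J_i\}$ whose return-time vectors obey
$$
\|KZ(g_{t_i},\omega)(\alpha\Im(\omega))\|_{\mathbb Z}\ \le\ c_{\mathcal K}\,\|r_i\alpha\|_{\mathbb Z}
$$
for a constant $c_{\mathcal K}$ depending only on $\mathcal K$; letting $i\to\infty$ then yields the lemma, provided $\ell_{\mathcal K}$ is taken large enough for the construction below.

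The transversals will be obtained by unrenormalizing a good cross-section on the recurrent surface $g_{t_i}\omega$. For each $i$, I would use the bounded geometry furnished by $g_{t_i}\omega\in\mathcal K$, together with the recurrence at $t_i\pm\ell$, to choose on $g_{t_i}\omega$ a horizontal cross-section $I_i$ to the vertical flow --- say one coming from a Rauzy--Veech (zippered rectangle) structure --- of length bounded above and below in terms of $\mathcal K$, whose first-return interval exchange is $c_{\mathcal K}$-balanced and over which the flow boxes are embedded up to a definite height $h_{\mathcal K}>0$. Producing such an $I_i$ uniformly in $i$ is precisely what the three recurrence times are for, and $\ell\ge\ell_{\mathcal K}$ guarantees enough ``room'' before and after time $t_i$. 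I would then set $J_i:=g_{-t_i}(I_i)\subset\omega$. Since $g_{t_i}$ carries $J_i$ onto $I_i$, rescales the transversal length by $e^{-t_i}$, and conjugates the vertical flow $F^s_\omega$ to the vertical flow on $g_{t_i}\omega$ with the vertical time rescaled by $e^{-t_i}$, the three required properties of $J_i$ transfer verbatim from those of $I_i$: $|J_i|=e^{-t_i}|I_i|\to0$; the disjointness window $0\le s\le c/\ell'(J_i)\asymp e^{t_i}$ for $F^s_\omega(J_i)$ corresponds to a window of size $\lesssim h_{\mathcal K}$ upstairs; the first-return interval exchange of $F_\omega$ on $J_i$ is the $e^{-t_i}$-rescaled copy of the one on $I_i$, hence equally balanced; and its return-time vector is $r_i=e^{t_i}r'_i$, where $r'_i$ is the (bounded) return-time vector of the vertical flow on $I_i$.

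It will then remain to compare $\|r_i\alpha\|_{\mathbb Z}$ with $\|KZ(g_{t_i},\omega)(\alpha\Im(\omega))\|_{\mathbb Z}$. On $g_{t_i}\omega$ the bounded vector $r'_i$ is the imaginary part of the period coordinates of $g_{t_i}\omega$ in the integral relative-homology basis given by the $d$ rectangles of $I_i$; projecting from relative to absolute cohomology and unwinding the definition of the Kontsevich--Zorich cocycle, the vector $r_i=e^{t_i}r'_i$ then represents the class $KZ(g_{t_i},\omega)\Im(\omega)\in H^1(M,\mathbb R)$, and the period-coordinate lattice $\mathbb Z^d$ maps onto $H^1(M,\mathbb Z)$ under this projection. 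Because over the compact set $\mathcal K$ only finitely many combinatorial types occur, this identification is uniform, so the Euclidean distance of $\alpha r_i$ to $\mathbb Z^d$ dominates, up to a constant depending only on $\mathcal K$, the Hodge-norm distance of $KZ(g_{t_i},\omega)(\alpha\Im(\omega))$ to $H^1(M,\mathbb Z)$ --- which is the displayed inequality.

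The main obstacle, and the only step that is not bookkeeping, will be this last translation: one must (i) produce, uniformly over $\mathcal K$ and using the recurrence at $t_i-\ell,t_i,t_i+\ell$ with $\ell\ge\ell_{\mathcal K}$, a cross-section $I_i$ with a balanced first-return interval exchange and flow boxes of a definite embedded height, and (ii) match the integral rectangle basis of relative homology with the integral structure of $H^1(M,\mathbb Z)$ tightly enough that the two notions of distance-to-the-lattice agree up to $\mathcal K$-uniform constants. Everything else --- the limit $|J_i|\to0$, the conjugation computation, reading off the balance constant and the return-time vector, and the appeal to Veech's criterion --- should be routine.
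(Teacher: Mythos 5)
Your proposal follows essentially the same route as the paper: take a zippered-rectangle cross-section of bounded geometry on the recurrent surface $g_{t_i}\omega\in\mathcal{K}$, pull it back by $g_{-t_i}$ to get transversals $J_i$ satisfying Veech's three conditions, identify the return-time vector with $KZ(g_{t_i},\omega)\Im(\omega)$ paired against an integral homology basis, and conclude by the criterion. The two steps you defer as the ``main obstacle'' are exactly what the paper supplies with short saddle-connection estimates: the forward recurrence $g_{t_i+\ell}\omega\in\mathcal{K}$ forces a lower bound $\tfrac12 d_{\mathcal{K}}e^{-\ell}$ on rectangle widths (the balance condition), the backward recurrence $g_{t_i-\ell}\omega\in\mathcal{K}$ forces continuity of the vertical flow on the cross-section for times up to $\tfrac12 d_{\mathcal{K}}e^{-\ell}$ (the disjointness window), and the boundedness of the return times and of the integral basis over $\mathcal{K}$ gives the uniform comparison of the two distances to the lattice.
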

\begin{proof} For each compact set $\mathcal{K}$ there exist constants $a_{\mathcal{K}}, \, b_{\mathcal{K}}$ and $d_{\mathcal{K}}>0$ so that if $\omega' \in \mathcal{K}$ then we can write $\omega'$ as a  zippered rectangles  over an interval $I$ of length at most $a_{\mathcal{K}}$, in  the horizontal direction and one endpoint a singularity (that is, as a suspension with piece-wise constant roof function). Moreover the heights of the rectangles are at most $b_{\mathcal{K}}$. Lastly $d_{\mathcal{K}}$ is the length of the shortest  saddle connection on all $\omega'\in \mathcal K$.  Because there are singularities on each endpoint of the rectangles, if $u$ is the width of a rectangle, there is a saddle connection on the surfaces with holonomy $(u,r)$ with $r<b_{\mathcal{K}}$. So if $\ell>\log(\frac {b_{\mathcal{K}}}{2d_{\mathcal{K}}})$ and $g_\ell \omega' \in \mathcal{K}$ then $u\geq \frac 1 2 d_{\mathcal{K}}e^{-\ell}$. In fact, the holonomy of the same saddle connection on $g_\ell \omega$ is  $(e^{\ell} u, e^{-\ell} r)$ and for $\ell>\log(\frac {2b_{\mathcal K}}{d_{\mathcal K}})$ we have
$$
e^{-\ell} \vert r \vert  \leq  \frac {d_{\mathcal K}}{  2b_{\mathcal K}} b_{\mathcal K} = d_{\mathcal K}/2\,.
$$
Since $g_\ell \omega'  \in \mathcal K$ its shortest saddle connection has length at least $d_{\mathcal K}$,
hence we have $e^{\ell} \vert u \vert \geq  d_{\mathcal K}/2$, as claimed.

Likewise, because one endpoint of $I$ is a singularity, if $F^s_{\omega'}$ is discontinuous on $I$ then there is a saddle connection on $\omega'$ with holonomy $(a,v)$ with $|v|\leq |s|$ and $|a|<a_{\mathcal{K}}$. Thus if $\ell>\log(\frac{a_{\mathcal{K}}}{2d_{\mathcal{K}}})$   and $g_{-\ell} \omega' \in \mathcal{K}$, then $F^s_{\omega'}$ acts continuously on $I$ for all $-\frac 1 2 e^{-\ell}d_\mathcal K< s <\frac 1 2 e^{-\ell}d_\mathcal{K}$. In fact,  the saddle connection on $g_{-\ell} \omega'$  has holonomy $(e^{-\ell}a,e^{\ell}v)$. For $\ell>\log(\frac{2a_\mathcal{K}}{d_\mathcal{K}})$ we have
$$
e^{-\ell} \vert a \vert  \leq   \frac{ d_\mathcal K}{2 a_{\mathcal{K}}   } a_{\mathcal{K}} = d_\mathcal K/2\,,
$$ 
hence $e^{\ell}\vert v \vert \geq d_\mathcal K/2$ and so $\vert s\vert \geq  d_\mathcal K/2 e^{-\ell}$, as claimed.

We choose $\ell_{\mathcal{K}}=\max\{\log(\frac{a_{\mathcal{K}}}{2d_{\mathcal{K}}}),\log(\frac{b_{\mathcal{K}}}{2d_{\mathcal{K}}})\}$.

 Now, if $\omega'=g_{t_i}\omega$, by $g_{-t_i}$ we can transport $I$ back to $\omega$ and obtain an interval $I'$ so that $|I'|=\ell'(I')\leq e^{-t}  a_{\mathcal{K}}$.   Observe that  the second and third bullet points of the Veech criterion still  hold since they state properties which are equivariant with respect to the  action of the Teichm\"uller flow $g_t$.  We now need to understand the return times to $I'$. Note that they are the imaginary parts of a basis for (absolute) homology of  the surface and are all of bounded size when transported to $g_{t_i}\omega$ (because $g_{t_i}\omega=\omega' \in \mathcal{K}$). Thus the return time vector is given by $KZ(g_t,\omega)\Im(\omega)$ and the lemma follows. 
\end{proof}

\subsection{ Eliminating the stable subbundle of the $SL(2,\mathbb{R})$-bundle and isometric subbundles}\label{sec:stab and iso}

Recall the decomposition of $r_\theta$ given by formula~\eqref{eq:NAN}.
Since the horocycle ${\hat h}_{t}$ fixes the real part of holomorphic differentials, that is, it fixes the leaves of the stable foliation of the Teichm\"uller flow, we have that $g_{\log \cos \theta} h_{-\tan\theta} \omega$ belongs to the same stable manifold as  $r_\theta \omega$, hence  $h_{-\tan\theta} \omega$ and $r_\theta \omega$ have the same vertical
foliation. The vertical flow of $h_{-\tan\theta} \omega$ equals
  the vertical flow of $r_\theta \omega$  after a linear reparametrization  by multiplication times $\cos \theta$.  It follows that if $\alpha$ is a (fixed) eigenvalue for the vertical flow of 
of $r_\theta \omega$, then $\alpha\sec(\theta)$ is an eigenvalue for the vertical flow of  $h_{-\tan\theta} \omega$,
thus by the Veech criterion (Lemma \ref{lem:v crit}) we have that if $\mathcal{K}$ is a compact set, $\ell\geq\ell_{\mathcal{K}}$,  $g_{t_i}h_s\omega,g_{t_i-\ell}h_s \omega$ and $g_{t_i+\ell}h_s\omega \in \mathcal{K}$ with $t_i \to \infty$ then
$$\underset{i \to \infty}{\lim} \|KZ(g_{t_i},h_s\omega)\alpha  \sec(\theta) [\Im (h_{-\tan\theta}  \omega)] \|_{\mathbb{Z}}\to 0.$$
We conclude that if $\alpha$ is a fixed eigenvalue for the vertical flow of $r_\theta \omega$ for a positive measure set of $\theta \in \mathbb T$, then $ \alpha \sec (\arctan s) $ is an eigenvalue for the vertical flow of $h_s\omega$, for a positive measure set of $s \in \mathbb R$, which implies  that for a positive measure set of $s \in \mathbb{R}$ we have 
that if $g_{t_i}h_s\omega,g_{t_i-\ell}h_s \omega$ and $g_{t_i+\ell}h_s\omega \in \mathcal{K}$ with $t_i \to \infty$ then 
$$\underset{i \to \infty}{\lim} \|KZ(g_{t_i},h_s\omega) \psi(s)] \|_{\mathbb{Z}} \to 0$$ where $\psi(s)$ is the horocycle section 
$$
\psi( s) :=   \alpha \sec (\arctan s) ) [ \Im (h_s\omega))]  = \alpha \sec (\arctan s) ) [ \Im (\omega))] .
$$

\begin{lem} \label{lem:transv} The section $\psi$  has range in $SL(2,\mathbb{R})$ subspace at 
$\omega$ and  it is transverse to any integer translate of the stable subbundle of the $SL(2,\mathbb{R})$ subbundle.   
\end{lem}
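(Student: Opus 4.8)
The plan is to compute the parallel‑transported curve $\psi_\omega(s):=\pi_\omega(s)\psi(s)$ explicitly and then read off both assertions from linear algebra on the tautological ($SL(2,\mathbb R)$) plane at $\omega$. First I would use that the horocycle flow $h_s$ fixes the imaginary part of an Abelian differential: in period coordinates $\Im(h_s\omega)=\Im(\omega)$ as differential forms on the underlying topological surface $M$, so $[\Im(h_s\omega)]=[\Im(\omega)]$ as cohomology classes; since the horocycle arc joining $h_s\omega$ to $\omega$ is contractible, $\pi_\omega(s)$ is the Gauss--Manin identification and fixes this class. Hence
$$
\psi_\omega(s)=\pi_\omega(s)\psi(s)=\alpha\,\sec(\arctan s)\,[\Im(\omega)]=\alpha\sqrt{1+s^{2}}\,[\Im(\omega)]\,,
$$
a curve taking values in the line $\mathbb R[\Im(\omega)]$, which is contained in the $SL(2,\mathbb R)$ subspace at $\omega$ (one of whose spanning classes is $[\Im(\omega)]$). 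This establishes the first assertion. The only point needing care is the bookkeeping of fibers: one must invoke that the $SL(2,\mathbb R)$ subbundle is $SL(2,\mathbb R)$-equivariant, hence preserved by parallel transport along horocycles, so that $\psi_\omega$ really lands in the tautological plane at $\omega$ and not merely in $H^{1}(M;\mathbb R)$.

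For the second assertion I would identify the stable subbundle of the $SL(2,\mathbb R)$ plane over $\omega$ under the Teichm\"uller geodesic flow. Using $\Re(g_t\omega)=e^{t}\Re(\omega)$, $\Im(g_t\omega)=e^{-t}\Im(\omega)$, the area normalization $\int_X\Re(\omega)\wedge\Im(\omega)=1$ and the Hodge star identity $*\Re(g_t\omega)=\Im(g_t\omega)$ on the Riemann surface of $g_t\omega$ --- equivalently the first variation formulas for the Hodge norm (\cite[\S 2]{Fo}, \cite[\S 3.5]{FM}) --- one gets $|KZ(g_t,\omega)[\Re(\omega)]|_{g_t\omega}=e^{-t}$ and $|KZ(g_t,\omega)[\Im(\omega)]|_{g_t\omega}=e^{t}$. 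Thus the stable subbundle of the tautological plane over $\omega$ is $W^{s}_\omega=\mathbb R[\Re(\omega)]$ and the unstable one is $W^{u}_\omega=\mathbb R[\Im(\omega)]$. The classes $[\Re(\omega)]$ and $[\Im(\omega)]$ are linearly independent in $H^{1}(M;\mathbb R)$ since their cup product equals $\int_X\Re(\omega)\wedge\Im(\omega)>0$; equivalently $W^{s}_\omega$ and $W^{u}_\omega$ are complementary lines spanning the tautological plane. Since $\psi_\omega$ takes values in $W^{u}_\omega$, its image modulo $W^{s}_\omega$ is the non-constant curve $s\mapsto\alpha\sqrt{1+s^{2}}\,\overline{[\Im(\omega)]}$ with $\overline{[\Im(\omega)]}\neq 0$ in $H^{1}(M;\mathbb R)/W^{s}_\omega$; consequently $\psi_\omega$ is contained in no affine translate of $W^{s}_\omega$, in particular in no integer translate $z+W^{s}_\omega$ with $z\in H^{1}(M;\mathbb Z)$, and where its velocity is nonzero it points along $[\Im(\omega)]\notin W^{s}_\omega$, so it crosses each such translate transversally. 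This is the transversality claimed.

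The statement thus reduces to two short computations --- the explicit form of $\pi_\omega(s)$ restricted to the tautological plane, and the $\pm1$ Lyapunov splitting of that plane --- followed by the observation that $[\Re(\omega)]$ and $[\Im(\omega)]$ span a non-degenerate symplectic plane. I do not expect a genuine obstacle; if there is one it is purely notational, namely keeping track of which fiber each class lives in when transporting $\psi$ back to $\omega$ and checking that the transported curve really lies inside the $SL(2,\mathbb R)$ subbundle there rather than only in the ambient Hodge bundle.
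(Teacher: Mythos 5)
Your first assertion and its proof are fine, and they match what the paper takes for granted: since $\Im(h_s\omega)=\Im(\omega)$ and parallel transport along the horocycle is the Gauss--Manin identification, $\psi_\omega(s)=\alpha\sqrt{1+s^2}\,[\Im(\omega)]$ lies in the tautological plane at $\omega$.

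The second assertion is where there is a genuine gap: you identify ``the stable subbundle'' with the single line $W^s_\omega=\mathbb R[\Re(\omega)]$ at the basepoint and prove that the curve $s\mapsto \alpha\sqrt{1+s^2}[\Im(\omega)]$ is not contained in any affine translate $z+\mathbb R[\Re(\omega)]$. But the object in the lemma is a \emph{subbundle}, evaluated along the horocycle: at $h_s\omega$ the stable line of the $SL(2,\mathbb R)$ plane is $\mathbb R[\Re(h_s\omega)]$, and under the very same Gauss--Manin identification you used for the imaginary part one has $[\Re(h_s\omega)]=[\Re(\omega)]+s[\Im(\omega)]$, so the stable line \emph{rotates} with $s$ (the horocycle $h_s$ fixes the unstable direction $[\Im(\omega)]$, not the stable one). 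The statement actually needed later (in the proof of Theorem \ref{thm:main}, to rule out the weak stable space) is that for each fixed integer class $z$ the set of $s$ with $\psi(s)-z\in\mathbb R[\Re(h_s\omega)]$ is negligible; transversality to translates of the fixed line $\mathbb R[\Re(\omega)]$ is a weaker, essentially trivial fact and does not give this. Your quotient-space argument does not transfer, because the line you quotient by changes with $s$. The repair is short and is exactly the paper's computation: write $\psi(s)-z=a\bigl(\Re(\omega)+s\,\Im(\omega)\bigr)$, wedge with $\Im(\omega)$ to see that $a$ is uniquely determined by $z$, and then the remaining scalar equation $as-\alpha\sqrt{1+s^2}=\mathrm{const}$ has at most finitely many solutions in $s$ (here $\alpha\neq 0$). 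As a minor additional point, your ``velocity points along $[\Im(\omega)]$'' remark fails at $s=0$, where the derivative of $\sqrt{1+s^2}$ vanishes, so even for the fixed-line statement the argument should be phrased via the explicit equation rather than via a nonvanishing velocity.
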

\begin{proof}  The stable stable subspace of the $SL(2,\mathbb{R})$  subspace at 
$h_s\omega$ is generated by the cohomology class $[\Re (h_s\omega)]\in H^1_{h_s\omega}(M, \mathbb R)$.
For any $z \in H^1_{\omega} (M, \mathbb Z)$ we consider the equation  (with $a \in \mathbb R$)
$$
\psi(s) - z = a \Re(h_s\omega) = a (\Re (\omega) + s \Im(\omega)) \,.
$$
By definition of $\psi(s)$ we derive
$$
\begin{aligned}
z \wedge \Im(\omega) &=- a \Re(\omega) \wedge \Im(\omega) \,   \\ 
 z \wedge \Re(\omega) &=  (as  -\alpha \sec (\arctan s) )  \Re(\omega) \wedge \Im(\omega)\,.
\end{aligned} 
$$
Thus the coefficient $a \in \mathbb R$ is uniquely determined (given $z \in H^1_{\omega} (M, \mathbb Z)$)
by the first equation, hence the second equation has a unique solution.
\end{proof}

\begin{prop} \label{prop:central}  For every compact set $\mathcal K$ there exists a constant $\gamma''_{\mathcal K}>0$ such that the following holds. 
If there exist $t_0>0$  and an integer class $z \in H^1_{g_{t_0}\omega} (M, \mathbb Z)$ such that  
 \begin{itemize}
 \item  $g_{t_0} \omega \in \mathcal K$ and $\{t>0\vert g_t\omega \in \mathcal K\}$ has 
 upper density  at least $\frac 3 4 $;
\item $\vert KZ(g_{t_0}, \omega) (\alpha [\Im(\omega)] ) -z  \vert \in (0, \gamma''_{\mathcal K})$,\footnote{Note that this condition cannot hold if $\alpha=0$.}
\item $KZ(g_{t_0}, \omega) (\alpha [\Im(\omega)] ) -z$ belongs to an isometric
subbundle $E_0$  for the Kontsevich--Zorich cocycle,
 \end{itemize}
then $\alpha$ is not an eigenvalue for the vertical  flow $F^t_\omega$ of $\omega$. 
\end{prop}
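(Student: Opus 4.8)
The plan is to argue by contradiction using the Veech criterion (Lemma~\ref{lem:v crit}); the role of the isometric hypothesis will be to keep the ``error vector'' $w:=KZ(g_{t_0},\omega)(\alpha[\Im(\omega)])-z$ at a \emph{fixed} positive distance from the integer lattice every time the Teichm\"uller orbit returns to $\mathcal K$. Let $\delta_{\mathcal K}>0$ denote the minimal Hodge length of a nonzero integer cohomology class over $\mathcal K$ (as in the proof of Lemma~\ref{lem:dist_integers}), and take $\gamma''_{\mathcal K}<\delta_{\mathcal K}/2$. Then the second hypothesis forces $\alpha\neq 0$ (for $\alpha=0$ the vector $KZ(g_{t_0},\omega)(\alpha[\Im(\omega)])$ is zero, and over $\mathcal K$ the only integer class within $\gamma''_{\mathcal K}$ of $0$ is $0$ itself, so the norm would vanish), hence any eigenvalue $e^{2\pi\imath\alpha}$ of the flow would be nontrivial. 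Assume, for contradiction, that $e^{2\pi\imath\alpha}$ is an eigenvalue of $F^t_\omega$, and let $\ell_{\mathcal K}$ be the constant given by Lemma~\ref{lem:v crit} for the compact set $\mathcal K$; fix $\ell:=\ell_{\mathcal K}$.

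The first step is to produce a diverging ``triple return'' sequence. Put $A:=\{t>0:g_t\omega\in\mathcal K\}$, which by hypothesis has upper density $\geq 3/4$. Since the shifts $A$, $\ell+A$, $-\ell+A$ all have upper density $\geq 3/4$, along a sequence $T_n\to\infty$ realizing the upper density of $A$ the complement $A^{c}\cup(\ell+A)^{c}\cup(-\ell+A)^{c}$ has density at most $3(1-3/4)=3/4<1$ along $T_n$, so $A\cap(\ell+A)\cap(-\ell+A)$ is unbounded (this is the only place the value $3/4$ enters, being exactly what makes $3(1-3/4)<1$). Choosing $t_i\to\infty$ in this set gives $g_{t_i}\omega,\,g_{t_i-\ell}\omega,\,g_{t_i+\ell}\omega\in\mathcal K$, and $t_i>t_0$ for $i$ large.

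The key step is the norm computation. With $w$ as above, set $c:=|w|\in(0,\gamma''_{\mathcal K})$. For large $i$ the cocycle identity gives
\begin{equation*}
KZ(g_{t_i},\omega)(\alpha[\Im(\omega)])=KZ(g_{t_i-t_0},g_{t_0}\omega)z+KZ(g_{t_i-t_0},g_{t_0}\omega)w=:z_i+w_i,
\end{equation*}
where $z_i\in H^1_{g_{t_i}\omega}(M,\mathbb Z)$ because the $KZ$ cocycle preserves the integral lattice, and $w_i\in E_0$ with $|w_i|=|w|=c$ because $E_0$ is isometric for the cocycle. Since $g_{t_i}\omega\in\mathcal K$, every nonzero integer class there has Hodge length $\geq\delta_{\mathcal K}>2c$, so $0$ is the nearest integer class to $w_i$ and therefore
\begin{equation*}
\|KZ(g_{t_i},\omega)(\alpha[\Im(\omega)])\|_{\mathbb Z}=\|w_i\|_{\mathbb Z}=|w_i|=c>0
\end{equation*}
for every large $i$. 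This contradicts the conclusion $\lim_i\|KZ(g_{t_i},\omega)(\alpha[\Im(\omega)])\|_{\mathbb Z}=0$ of Lemma~\ref{lem:v crit}, so $e^{2\pi\imath\alpha}$ cannot be an eigenvalue of $F^t_\omega$.

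I expect the main obstacle to be conceptual rather than computational: one must make sure the isometric subbundle $E_0$ is genuinely available along the entire forward Teichm\"uller orbit of $g_{t_0}\omega$, so that $|w_i|$ is \emph{exactly} $c$ for all $i$ (not merely bounded below). Granting this, the argument is just the Veech criterion combined with the elementary density fact and the uniform systole bound $\delta_{\mathcal K}$ over $\mathcal K$; in particular the hypotheses are used transparently — upper density $3/4$ yields the triple‑return sequence, smallness of $\gamma''_{\mathcal K}$ yields $0<c<\delta_{\mathcal K}/2$, and the isometric hypothesis yields the persistence of the norm $c$.
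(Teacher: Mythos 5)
Your proof is correct and follows essentially the same route as the paper: use the isometric action on $E_0$ to keep the error vector at fixed Hodge norm $c<\gamma''_{\mathcal K}$, use the uniform lower bound on the systole of the integer lattice over $\mathcal K$ to conclude $\Vert KZ(g_{t_i},\omega)(\alpha[\Im(\omega)])\Vert_{\mathbb Z}=c>0$ at return times, and contradict the Veech criterion via a triple-return sequence obtained from the upper-density-$\frac34$ hypothesis. Your explicit shifting argument for producing the sequence $(t_i)$ is exactly the ``elementary argument left to the reader'' in the paper, so no gap remains.
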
 

\begin{proof}[Proof of Proposition~\ref{prop:central}]   Let $\gamma''_{\mathcal K}$ be so small that the minimum Hodge distance between points of
the lattice $H^1_\omega (M, \mathbb Z)$  for $\omega \in \mathcal K$ is at least $10\gamma''_{\mathcal K}$.
By the isometry property of the Kontsevich--Zorich cocycle on $E_0$, we have that for all $t\geq t_0$, 
\begin{multline}
\Vert KZ(g_{t_0} , \omega) (\alpha [\Im(\omega)] ) \Vert_{\mathbb Z} = \vert KZ(g_{t_0}, \omega) (\alpha [\Im(\omega)] ) - z\vert \\ = \vert KZ(g_t, \omega) (\alpha [\Im(\omega)] ) - KZ(g_{t-t_0}, g_{t_0} \omega)(z)\vert  \leq \gamma''_{\mathcal K}\,,
\end{multline}
hence, whenever $g_{t} \omega  \in \mathcal K$, since $KZ(g_{t-t_0}, g_{t_0} \omega)(z) \in H^1_{g_t\omega} (M, \mathbb Z)$, by the choice of the constant $\gamma_{\mathcal K}>0$, we have
$$
\Vert KZ(g_t , \omega) (\alpha [\Im(\omega)] ) \Vert_{\mathbb Z}= \Vert KZ(g_{t_0} , \omega) (\alpha [\Im(\omega)] ) \Vert_{\mathbb Z} \not =0.
$$
Since by hypothesis that the set $\{t>0:g_t\omega \in \mathcal{K}\}$ has upper density~$\frac 3 4 $, it can be proved (by an elementary argument left to the reader) that for any $\ell_0>0$  there exist $\ell >\ell_0$ and a diverging sequence $(t_i)$ such that  $g_{t_i}\omega$, $g_{t_i-\ell}\omega$ and $g_{t_i+\ell}\omega \in \mathcal{K}$ for all $i\in \mathbb N$. Hence, we may apply the Veech criterion (Lemma~\ref{lem:v crit}) and $\alpha$ is not an eigenvalue, as stated.
\end{proof}

\subsection{Proof of Theorem \ref{thm:main}}
Before completing the proof, we recall that, if the Kontsevich-Zorich cocycle does not act isometrically on a subbundle, then   
Corollary \ref{cor:better} can be applied to it.  
\begin{thm} \label{thm:big quote} (c.f. \cite{EskMir}, \cite{FMZ}, \cite{Fi17}) The Hodge bundle (hence the symplectic orthogonal of the $SL(2,\mathbb R)$ subbundle) splits as a sum of an $SL(2,\mathbb R)$-invariant continuous  (in fact, algebraic) subbundle on which the Kontsevich--Zorich cocycle acts isometrically and a finite number of $SL(2,\mathbb R)$-invariant continuous (algebraic)  irreducible subbundles on each of which the cocycle has a strictly positive exponent. After passing to a finite cover, the irreducible components of the decomposition can be taken to be strongly irreducible, in the sense that they do not admit a measurable almost invariant splitting.
\end{thm} 
\begin{proof} By \cite[Theorem A.6]{EskMir} the Kontsevich--Zorich cocycle on Hodge bundle is semi-simple, in the sense that it can be split as a direct sum of $SL(2,\mathbb R)$-invariant measurable subbundles, and each  irreducible component is either isotropic or symplectic. By \cite[Theorem A.5]{EskMir} if the subbundle is isotropic then all of the exponents are zero. By \cite[Theorem A.4]{EskMir}, if all the exponents are zero then the cocycle acts isometrically. It follows that the Hodge bundle can be split as 
a sum of an isometric component, equal to the sum of all isotropic irreducible components, and a finite sum of symplectic irreducible components with at least one non-zero exponent. Each symplectic
non-isometric, component has both a positive and a negative exponent (which are opposite of each other). 
After passing to a measurable finite cover we can assume that each irreducible component is strongly irreducible. By \cite[Theorems 1.4 and 1.5]{Fi17} the splitting the Hodge bundle into irreducible components is continuous, in fact real algebraic, hence the irreducible components are strongly irreducible after passing to an algebraic finite cover. 
\end{proof}

\begin{cor}\label{cor:big quote} 
Let $\phi$ be a horocycle section. For almost every $s\in \mathbb R$, one of the following two (mutually exclusive) possibilities holds
\begin{enumerate}
\item\label{central} $\phi(s)$ {belongs to a continuous} $SL(2,\mathbb{R})$-equivariant subbundle where the Kontsevich-Zorich cocycle acts isometrically.
\item\label{unstable} $\phi(s)$ has a non-trivial projection onto a continuous $SL(2,\mathbb{R})$-equivariant subbundle where the Kontsevich-Zorich cocycle has a positive exponent and acts strongly irreducibly.
\end{enumerate}
\end{cor}

\begin{proof}[Proof of Theorem \ref{thm:main}]  We proceed with a proof by contradiction. Let $B \subset [-1,1]$ denote the set of points for which the conclusion of the theorem fails, and assume that the measure of $B$ is $\xi>0$.  Let $\mathcal{K}$ be a compact set so that  Lemma \ref{lem:in compact}  and Corollary \ref{cor:better} hold with $c=.999$ and  $\mu(Int(\mathcal{K}))>.999$. Let $a=\max\{t_0,\ell_{\mathcal{K}}\}$ where $t_0$ is as in Lemma \ref{lem:in compact} and  $\ell_{\mathcal{K}}$ is as in Lemma \ref{lem:v crit}. 
 We choose $\gamma_0 >0$  small enough so that Corollary \ref{cor:better} holds with the compact set $\mathcal{K}$, $\frac 1 2 |\log(\gamma_0 )|>T_0$, where $T_0$ is as in \S \ref{sec:key prop}, $\gamma_0\leq \gamma''_{\mathcal{K}}$ where $\gamma''_{\mathcal{K}}$ is as in Proposition \ref{prop:central}, and two additional smallness conditions below (so that the LHS in formula~\eqref{eq:beta} is small enough and the RHS in the estimate of formula~\eqref{cond:last} below is at most $\frac 1 2$). By Lemma \ref{lem:v crit} there exists $t_0>0$ so that 
  \begin{multline}
 \label{eq:weak stable density}
 \text{\rm Leb}(\{s \in [-1,1]:\text{for all } t\geq t_0   \text{ either }
  \Vert KZ(g_t,h_s\omega)\psi(s)\Vert_{\mathbb{Z}}<e^{-a-1}\gamma_0 \\
  \text{ or }g_t h_s\omega \notin \mathcal{K}, \text{ or }g_{t-a}h_s\omega  \notin \mathcal{K}, \text{ or }g_{t+a}h_s\omega \notin \mathcal{K}\})>.99\xi.
 \end{multline} Let $B'$ denote the set  in the LHS of formula~\eqref{eq:weak stable density}. By the Lebesgue density theorem, there exists $r_0>0$ so that 
  \begin{equation}\label{eq:density}
{\textrm{Leb}}(\{s\in B': {\textrm{Leb}}(B(s,r) \cap B' ) >1.99 r \text{ for all }r\leq r_0\})  > .98 \xi
 \end{equation}
 Let $B''$ be the set in the LHS of formula~\eqref{eq:density}. 
 We now choose $t\geq t_0$ and $ s\in  B''$ such that $h_{s}\omega$ is Birkhoff generic, which is a full measure condition by \cite[Theorem 1.1]{CE},  so that 
 \begin{itemize}
 \item $e^{-2t}<r_0$
 \item $g_th_{s}\omega \in \mathcal{K}$ 
 \item $\Vert KZ(g_t,h_s\omega)\psi(s )\Vert_\mathbb{Z}=\gamma< \gamma_0$.
 \end{itemize} 
 The first assumption is trivial.  The second and third assumption can be simultaneously satisfied by Lemma \ref{lem:v crit} and Birkhoff genericity. (Indeed $\mu(\mathcal{K})>.999>\frac 3 4 $ and so, for any $\ell$, there exist arbitrarilly large $t$ so that $g_{t-\ell}h_{s}\omega, g_th_\omega, g_{t+\ell}h_{s}\omega \in \mathcal{K}$.) 

 By the third, and last, of the above assumptions there exists an integer class $z \in H^1_{g_t h_s \omega} (M, \mathbb Z)$  such that 
 $$
 \vert KZ(g_t,h_s\omega)(\psi(s)-z) \vert = \gamma< \gamma_0\,. $$
We now show that if $\psi(s)-z$ is contained in an isometric subbundle, then for almost every $\theta\in S^1$, the flow in direction $\theta$ does not have $\alpha$ as an eigenvalue. Indeed, we are assuming all but the first assumption of Proposition \ref{prop:central} and that assumption holds for almost every $s\in [-1,1]$, by Birkhoff genericity. 

 Thus, up to the translation of $\psi$ by the integer vector $z$, that is, by considering the section $\psi -z$ instead of $\psi$, we can assume that $\psi$ has a non-zero projection  on a strongly irreducible subbundle with a
non-zero top Lyapunov exponent.
 
By Lemma~\ref{lem:transv}, the section $\psi$ is transverse to any integer translate of the stable subbundle of the $SL(2,\mathbb{R})$ subbundle, so clearly the projection to the stable subspace of the $SL(2,\mathbb{R})$ subbundle can be ignored. 
  
 To complete the proof we now separately consider the projection of $\phi$ to the unstable part of the 
 $SL(2, \mathbb R)$ subbundle and to the invariant subbundle $\hat{\mathcal{F}}^+\subset \hat{\mathcal{F}}$, given by the sum of all strongly irreducible subbundles with a positive exponent in the symplectic orthogonal $\hat{\mathcal{F}}$ 
 of the $SL(2, \mathbb R)$ subbundle. 
 
 We show that for each of these individually, at the time  $t'>t$ when the norm of the section has grown to size $\gamma_0$ under the action of the cocycle, the subset of $B(s, e^{-2t})$ of points which are not in the set in formula~\eqref{eq:weak stable density} has at least half the measure of $B(s,e^{-2t})$, a contradiction which will conclude our argument.
 First, we consider the projection onto the unstable part of the  $SL(2, \mathbb R)$-subbundle. By Lemma \ref{lem:in compact},  for any $t'>t_0+a$ we have that  
  \begin{multline}\label{eq:intwice}
 {\textrm{Leb}}(\{s'\in B(s,e^{-2t}): g_{t'}h_{s'}\omega, \, g_{t'-a}h_{s'}\omega \\  \text{ and }g_{t'+a}h_{s'}\omega \in \mathcal{K}\})>1.994e^{-2t}.
 \end{multline}
 Let $\delta>0$ be the norm of the projection of $KZ(g_t, h_s\omega) \psi(s)$   onto the unstable part of the $SL(2,\mathbb{R})$ subbundle. Let $\ell=\log(\frac {\gamma_0} \delta)+1$.
 Because the unstable part of the $SL(2,\mathbb{R})$ subbundle grows by a constant factor  $e^{t'}$ at time $t'>0$, we have our claim by applying \eqref{eq:intwice} at $t'=t+\ell$. 

It now suffices to prove the analogous result for $\hat{\mathcal{F}}=\hat{\mathcal{F}}^+\oplus \mathcal{I}$, where $\hat{\mathcal{F}}^+$ is, as above, the sum of all subbundles where the cocycle acts strongly irreducibly with a positive exponent and $\mathcal{I}$, where it acts isometrically. Such a decomposition is guaranteed by Theorem \ref{thm:big quote}.
Let $\phi$ be the projection of $\psi$ on $\hat{\mathcal{F}}^+$.  By choosing a possibly larger $t>0$, we can further assume that  
\begin{itemize}
\item $g_th_{s}\omega \in \mathcal{K}$,
\item the horocycle section $\phi_0:[-1,1]\to H^1(M,\mathbb{R})$ defined by 
$$\phi_0(s')=KZ(g_t,h_{s+e^{-2t}s'}\omega)\phi(s+e^{-2t}s')$$
 is $\kappa\gamma'$-Lipschitz at $h_{s} \omega $, 
 where $\kappa<\kappa_0$ as in Corollary  \ref{cor:better},
 \item $\max_{s'\in [-1, 1]} \Vert \phi_0 (s')\Vert_\mathbb{Z}=\gamma'< \gamma_0$. 
\end{itemize}
Indeed by Lemma~\ref{lem:stay_proj_lip}, for every compact set $\mathcal{K}'$, there exists $\upsilon>0$ so that to establish the second bullet point it suffices to show that 
$|\{0<t''<t:g_{t''}h_{s'}\omega \in \mathcal{K}'\}|>\upsilon >0$ for all $s'\in [s-e^{-2t},s+e^{-2t}]$. We choose $\mathcal{K}'$ to be the closure of a small neighborhood of $\mathcal{K}$. 
By the Birkhoff genericity of $h_s\omega$ with respect to the Teichm\"uller flow, there exists $t_1$ so that $|\{0<t''<t_1:g_{t''}h_{s}\omega \in \mathcal{K}\}|>\upsilon>0$. There exists $U$, a small neighborhood of $s$, so that for every $s'\in U$ and $0\leq t''\leq t_1$, 
$$g_{t''}h_{s'}\omega\in \mathcal{K}'\, \text{ whenever }\, g_{t''}h_s\omega \in \mathcal{K}.$$ The second bullet point follows by choosing $t\geq t_1$ so that ${s'\in [s-e^{-2t},s+e^{-2t}]\subset U.}$

We now iteratively apply Corollary~\ref{cor:better}.  For every $j\in \mathbb N$,  let $\mathcal S_{j} \subset [s-e^{-2t},s + e^{2t}]$ denote the set of points such that Corollary~\ref{cor:better} can be applied $j$-times. 
Inductively, by applying Corollary~\ref{cor:better} for $j-1$ times, for a point $s_0 \in \mathcal S_{j} $ we obtain real numbers $\ell_1(s_0),\dots,\ell_{j}(s_0)$. Let $L_j(s_0) := \ell_1(s_0)+\dots +\ell_j(s_0)$ and, for  all $s'\in [-1,1]$,  let
$$
\phi_{j}(s' ) :=  KZ (g_{t+L_{j}(s_{0})}, h_{s'} \omega) \phi(s')  \,,
$$
which restricts on the interval $[s_0-e^{-2\big(t+L_j(s_0)\big)},s_0+e^{-2\big(t+L_j(s_0)\big)} ]$ to a reparametrization of the section 
$\phi_{t+L_{j}(s_0), s_0}$ as in Corollary \ref{cor:better} \ref{conc:growth 2}. 
Since the section $\phi_{t+L_{j}(s_0),s_0}$ 
is $\gamma' \kappa$-Lipschitz and has maximum at least $\gamma'$ it follows that 
$$
\min \phi_{t+L_j(s_0),\, s_0} \geq (1-\kappa) \max \phi_{t+L_j(s_0), s_0}\,.
$$
Let then $(x_j)$ denote the sequence defined by recursion as follows: let $x_0 = \min \phi_0$ and let 
$$
x_{j+1} =(1-\kappa)^2 x_j^{\rho}.
$$
Notice that, for all $j\in \mathbb N$,
$$
x_j = (1-\kappa)^{2+ \dots + 2\rho^j} x_0^{\rho^j} \geq   (1-\kappa)^{2/(1-\rho)} { x_0  }^{\rho^j}\,.
$$
Observe that, if 
\begin{equation}
\label{eq:beta}
 \beta:=\log_{ x_0  }(\gamma_0)- 2(1-\rho)^{-1} \log_{x_0 }
  (1-\kappa) >0\,.
 \end{equation}  
 and $\upsilon =  \lceil \log_{\rho} (\beta)\rceil$ then, if $\gamma_0$ is small enough, we have

 \begin{equation}\label{eq:bigger than threshold} 
2\gamma_0^\rho > x_\upsilon\geq \gamma_0\,.
 \end{equation} 
Indeed, $x_0^{\rho^{\log_\rho(\beta)}}=\gamma_0(1-\kappa)^{-\frac{2}{1-\rho}}.$
 Now, by induction, if $s'\in \mathcal{S}_{j+1}$ and $\|KZ(g_{t+L_i( s' )},h_{s'}  \omega) \phi(s') \|_{\mathbb{Z}}<\gamma_{\mathcal{K}}$ for $i\leq j$ then 
 \begin{equation}\label{eq:bigger than x}
 \|KZ(g_{t+L_{j+1} (s' )},h_{s'}\omega) \phi(s') \|_{\mathbb{Z}}\geq x_{j+1}. 
 \end{equation} 
 Indeed, inductively $x_i \leq \|KZ(g_{t+L_i(s')},h_{s'}\omega)\phi(s')\|_{\mathbb{Z}}$ and so it follows by formula   
 \eqref{eq:growth different form}. To see this, we now check that, if $s' \in \mathcal{S}_j$, then we can apply Corollary~\ref{cor:better} at $h_{s'}\omega$ iff 
$\|KZ(g_{t+L_j(s')},h_{s'}\omega)\phi(s')\|$ is small enough. 
First, $e^{t+L_j(s')}>e^t$ and $g_{t+L_j(s')}h_{s'}\omega \in \mathcal{K}$ by the definition of $\mathcal{S}_j$. Second, the Lipschitz assumption follows because 
Lemma~\ref{lem:stay lip} gives that the Lipschitz constant gets smaller, while at the same time $\|KZ(g_{t+L_j(s')},h_{s'}\omega)\phi(s')\|\geq \|KZ(g_{t},h_{s'}\omega)\phi(s')\|$ and so the requirement on the Lipschitz constant gets laxer. So the second bullet point implies \ref{cond:lip transport 2}.
 
We now bound from below the measure of the set of these points to obtain a contradiction with \eqref{eq:density}, thereby completing the proof. By \eqref{eq:meas lost 2} we have that the the measure of the set of $s' \in [s-e^{-2t},s+e^{-2t}]$ so that $\|KZ(g_{t+L_{\min\{j:s' \notin S_j\}-1}(s')},h_{s'}\omega ) \phi(s') \|_{\mathbb{Z}}<\gamma_0$ is at most (since $\upsilon =  \lceil \log_{\rho} (\beta)\rceil$ and by formula \eqref{eq:beta})  
\begin{equation}\label{cond:last}
\sum_{j=0}^{\upsilon} (x_j)^{\sigma} \leq  \sum_{j=0}^{\upsilon} ({ x_0}^{\rho^j})^{\sigma}\le \sum_{i\ge -1}  (x_0  ^{\beta\sigma})^{\frac 1 {\rho^i}}
= \sum_{i\ge 0}  \left ( \frac{\gamma_0} {(1-\kappa)^{{2}/(1-\rho)  }}\right)^{\rho \sigma \frac 1 {\rho^i}} .
\end{equation}

Clearly this  bound  goes to 0 with $\gamma_0$,  and so if $\gamma_0$ is small enough we are left with at least a subset of conditional measure greater than $\frac 1 2 $. That is, for a set of $s'\in [s-e^{-2t},s + e^{-2t}]$ of conditional measure at least $\frac 1 2 $ we have that for each $s'$ in this set: 
 \begin{itemize}
\item there exists $\tau_{s'}\geq t$ so that  $\|KZ(g_{\tau_{s'}},h_{s'}\omega) \phi(s')\|_{\mathbb{Z}}\geq \gamma_0$\,,
\item $g_{\tau_{s'}}h_{s'}\omega \in \mathcal{K}$. 
 \end{itemize}
 By Lemma~\ref{lem:in compact} if $\gamma_0$ is sufficiently small  (so that in Corollary~\ref{cor:better} $\ell (s) \geq \vert \log \gamma_0 \vert/2 >t_0$, 
  hence $\tau_{s'} -t \geq t_0$) the subset of $s' \in [s-e^{-2t}, s+e^{-2t}]$  such that $g_{\tau_{s'}+a} h_{s'}\omega$  or $g_{\tau_{s'}+2a}h_{s'}\omega$ are not in $\mathcal{K}$ has conditional measure at 
 most~$1/9$.  By Lemma \ref{lem:KZbound} we have that 
 $$
 \|KZ(g_{\tau_{s'}},h_{s'}\omega) \phi(s')\|_{\mathbb{Z}}\geq \gamma_0 \Longrightarrow \|KZ(g_{\tau_{s'}+a},h_{s'}\omega) \phi(s')\|_{\mathbb{Z}}\geq e^{-a} \gamma_0\,.
 $$
 By the above lower bound, together with the condition that $g_{\tau_{s'}}h_{s'}\omega \in \mathcal{K}$,
 $g_{\tau_{s'}+a}h_{s'}\omega \in \mathcal{K}$ and $g_{\tau_{s'}+2a}h_{s'}\omega \in \mathcal{K}$, 
 we conclude that $s'\not\in B'$.  Since the conditional measure of such $s'\in [s-e^{-2t},s+e^{-2t}]$ is
 at least $1/2-1/9> 1/4$, this contradicts that $s \in B''$ (because $e^{-2t}<r_0$). 
 \end{proof}

\section{Proof of Theorem \ref{thm:billiard}}\label{sec:billiard}

\subsection{Setup}
Let $P$ be a polygon and $T_1(P)$ be the phase space for its billiard flow. Let $X(P)=T_1(P)\times T_1(P)$.

The phase space $T_1(P)$ is endowed with the Liouville measure  for the billiard flow, hence the space $X(P)$
can be endowed with the product measure. All integrals below will be taken with respect to the square of the Liouville measure on $X(P)$.

We also need to define $Lip_c(X(P))$ the space of c-Lipschitz functions on $X(P)$. For instance, we view $T_1(P)$ as a quotient of $P\times S^1$ (which is a metric space) and use the path metric to define a metric on $T_1(P)$.  

Theorem~\ref{thm:billiard} follows readily from the next lemma and the Baire category Theorem. The proof of this lemma will  be presented after the proof of Theorem \ref{thm:billiard} assuming the lemma.
\begin{lem} \label{lem:open} For every $\epsilon>0$ and $T>0$ we have 
\begin{equation}\label{eq:open}
\begin{aligned}
\{P:  \int_{X(P)} \Big\vert \frac{1}{T}\int_0^Tf(F^t(x,\theta),&F^t(y,\psi))dt-\int_{X(P)}f \Big\vert <\epsilon  \\   &\text{ for all }f\in \LipP\}
\end{aligned}
\end{equation}
is open.
\end{lem}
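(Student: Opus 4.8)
The plan is to show that the set in \eqref{eq:open} is stable under small perturbations of the table: if $P_0$ lies in it, then so does every polygon $P$ sufficiently close to $P_0$ in the space of labelled $n$-gons. Fix notation: for a polygon $Q$ let $\mu_Q$ be the square of the Liouville measure on $X(Q)$, normalised to total mass one; for $z=((x,\theta),(y,\psi))\in X(Q)$ write $F^tz:=(F^t(x,\theta),F^t(y,\psi))$ for the product billiard flow; and for $f\in C(X(Q))$ set $m_Q(f):=\int_{X(Q)}f\,d\mu_Q$, $A^Q_Tf(z):=\tfrac1T\int_0^T f(F^tz)\,dt$, and
$$
D_Q(f):=\int_{X(Q)}\bigl|A^Q_Tf-m_Q(f)\bigr|\,d\mu_Q .
$$
Then $A^Q_Tf$ is defined $\mu_Q$-almost everywhere, $|A^Q_Tf|\le\|f\|_\infty$, so $D_Q$ is a well-defined $[0,\infty)$-valued functional on $C(X(Q))$, continuous in the supremum norm (since $\|A^Q_Tf-A^Q_Tg\|_\infty\le\|f-g\|_\infty$) and unchanged when a constant is added to $f$; membership of $Q$ in the set \eqref{eq:open} is exactly the assertion that $D_Q(f)<\epsilon$ for all $f\in Lip_1(X(Q))$.

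The first ingredient is a \emph{uniform} gap at $P_0$. Since $X(P_0)$ is compact, Arzel\`a--Ascoli shows that the family of $1$-Lipschitz functions on $X(P_0)$ vanishing at a fixed base point is compact in $C(X(P_0))$; as $D_{P_0}$ is continuous and constant on cosets $f+\mathbb R$, the supremum $\sup_{f\in Lip_1(X(P_0))}D_{P_0}(f)$ equals the maximum of $D_{P_0}$ over that compact family, hence is attained. Consequently ``$D_{P_0}(f)<\epsilon$ for every $f$'' upgrades to the existence of $\delta_0>0$ with $D_{P_0}(f)\le\epsilon-3\delta_0$ for every $f\in Lip_1(X(P_0))$. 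This compactness step is precisely what makes the lemma true: without it the supremum could equal $\epsilon$ and a small perturbation could push it above.

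The second, and main, ingredient is a transfer map between $X(P_0)$ and $X(P)$. For $P$ in a small neighbourhood $\mathcal U$ of $P_0$ — so that $P$ has the same combinatorial type as $P_0$ — I would, following \cite{vorobets}, construct a homeomorphism $\Theta_P\colon X(P_0)\to X(P)$, a quantity $\eta(P)\ge0$ with $\eta(P)\to0$ as $P\to P_0$, and a Borel set $B_P\subset X(P_0)$ such that: (i) $\Theta_P$ and $\Theta_P^{-1}$ are $(1+\eta(P))$-Lipschitz; (ii) $\|(\Theta_P)_*\mu_{P_0}-\mu_P\|_{\mathrm{TV}}\le\eta(P)$; (iii) $\mu_{P_0}(B_P)\le\eta(P)$; and (iv) $d_{X(P)}\bigl(F^t(\Theta_Pz),\Theta_P(F^tz)\bigr)\le\eta(P)$ for every $z\in X(P_0)\setminus B_P$ and every $t\in[0,T]$. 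The map $\Theta_P$ is built from a labelling-compatible, bi-Lipschitz homeomorphism $P_0\to P$ that is $C^0$-close to the identity, lifted to the unit tangent bundles and squared, so that (i)--(ii) hold because it has Lipschitz constants near $1$ and Jacobian near $1$. Properties (iii)--(iv) express that a billiard trajectory on the fixed interval $[0,T]$ which undergoes a bounded number of collisions, all of them transverse and away from the vertices, depends continuously on the table (the basic shadowing estimate being that a straight segment issuing from $\Theta_P(x)$ stays within $O(\eta(P))$ of the image under the homeomorphism of the corresponding segment from $x$); the exceptional initial conditions — trajectories with too many collisions in time $T$, or with a near-grazing or near-vertex collision — form a set $B_P$ whose $\mu_{P_0}$-measure tends to $0$, uniformly for $P\in\mathcal U$, because the number of collisions in time $T$ is $\mu_{P_0}$-integrable (a Kac-type bound, so configurations with many collisions are rare) and the grazing/near-vertex configurations cut out sets of small measure. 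This is the technical heart of the proof and the step I expect to be the main obstacle.

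Granting these, the conclusion is routine. Fix $P\in\mathcal U$ and $f\in Lip_1(X(P))$, write $\eta:=\eta(P)$, and let $D$ bound $\operatorname{diam}X(Q)$ for all $Q\in\mathcal U$ (uniform, the tables being uniformly bounded). Subtracting a constant from $f$ — which changes neither $D_P(f)$ nor the Lipschitz class — we may assume $\|f\|_\infty\le D$. Put $g:=f\circ\Theta_P\in C(X(P_0))$; by (i) it is $(1+\eta)$-Lipschitz, so $D_{P_0}(g)=(1+\eta)\,D_{P_0}\!\bigl(\tfrac{g}{1+\eta}\bigr)\le(1+\eta)(\epsilon-3\delta_0)$ by the uniform gap. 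Now change variables by $\Theta_P$ in the integral defining $D_P(f)$ and estimate the resulting errors: (ii) lets one replace the reference measure and the space average $m_P(f)$ by their counterparts $\mu_{P_0}$ and $m_{P_0}(g)$ on $X(P_0)$, up to $O(D\eta)$; (iv) lets one replace $A^P_Tf\circ\Theta_P$ by $A^{P_0}_Tg$ off $B_P$, legitimately since $f$ is $1$-Lipschitz; and (iii), together with the crude bound $|A^P_Tf-m_P(f)|\le2D$, controls the contribution of $B_P$. This yields
$$
D_P(f)\ \le\ D_{P_0}(g)+CD\,\eta\ \le\ (1+\eta)(\epsilon-3\delta_0)+CD\,\eta
$$
for an absolute constant $C$, where the error term depends only on $P$, not on $f$. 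The right-hand side tends to $\epsilon-3\delta_0<\epsilon$ as $P\to P_0$, so after shrinking $\mathcal U$ it is $<\epsilon$ for all $f\in Lip_1(X(P))$ simultaneously; thus $P$ lies in the set \eqref{eq:open}.
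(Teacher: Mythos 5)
Your proposal is correct and follows essentially the same route as the paper: the uniform gap via compactness of the unit Lipschitz ball is the paper's Lemma~\ref{lem:lip tolerance}, the near-isometric transfer of the Lipschitz class is Lemma~\ref{lem:lip in lip}, and your properties (ii)--(iv) of the transfer map $\Theta_P$ are exactly the content of Vorobets' Proposition 2.3 (the near-boundary alternative there is harmless in the quotient metric on $T_1(P)$, where incoming and outgoing directions are identified), which the paper also quotes rather than reproves. The final bookkeeping matches the paper's proof of Lemma~\ref{lem:open}.
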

By Theorem \ref{thm:erg prod} we have

\begin{cor} \label{cor:dense} For every $\epsilon>0$ there exists $N_\epsilon$ such  that the following holds. If $P$ is a rational polygon with the property that the group of reflections about its sides (translated to the origin) contains a rotation by $\frac{2\pi} {M}$ with $M\geq N_\epsilon$, then there exists $T_0$ such that for all $T\geq T_0$ and $f \in \LipP$ we have
$$ \int_{X(P)}  \vert \frac{1}{T}\int_0^Tf(F^t(x,\theta),F^t(y,\psi))dt-\int_{X(P)}f|<\epsilon.$$
\end{cor}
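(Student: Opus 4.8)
The plan is to reduce the statement, via the classical unfolding construction for rational polygons, to Theorem~\ref{thm:erg prod} applied to a single translation surface, and then to exploit the large rotational symmetry of $P$ to show that a $1$--Lipschitz observable on $X(P)$ cannot tell the different invariant translation surfaces apart, so that the ergodic averages converge, almost everywhere, to a function that is uniformly close to $\int_{X(P)}f$.

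First I would set up the decomposition. Let $D_P\subset O(2)$ be the group generated by the reflections in the sides of $P$ (translated to the origin), which is finite since $P$ is rational, and write $T_1(P)=(P\times S^1)/\!\sim$. For each $\theta_0\in S^1$ the set $X_{\theta_0}=\{[(p,\theta')]: p\in P,\ \theta'\in D_P\theta_0\}$ is $F^t$--invariant, and by the unfolding construction (see \cite{MT}, and \cite{KMS} for the analogous reduction for ergodicity) it is, together with $F^t|_{X_{\theta_0}}$ and its normalized Lebesgue measure $\lambda^2$, measure--theoretically isomorphic to a fixed translation surface $\omega_P$ equipped with the translation flow $F^t_{r_{\theta_0}\omega_P}$ in direction $\theta_0$; moreover the Liouville measure on $T_1(P)$ disintegrates over $S^1/D_P$ with conditional measures the $\lambda^2$ of the surfaces $X_{\theta_0}$. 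Hence $X(P)=\bigsqcup_{(\theta_0,\phi_0)}X_{\theta_0}\times X_{\phi_0}$, on each piece $F^t\times F^t$ acts as $F^t_{r_{\theta_0}\omega_P}\times F^t_{r_{\phi_0}\omega_P}$, and, writing $m_{\theta_0,\phi_0}(f):=\int_{X_{\theta_0}\times X_{\phi_0}}f\,d(\lambda^2\times\lambda^2)$, a bookkeeping of normalizations gives $\int_{X(P)}f=\int_{(S^1/D_P)^2}m_{\theta_0,\phi_0}(f)$ with respect to normalized Lebesgue on $(S^1/D_P)^2$.

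Next I would apply Theorem~\ref{thm:erg prod} to $\omega_P$: for a.e. pair $(\theta_0,\phi_0)$ the product flow $F^t_{r_{\theta_0}\omega_P}\times F^t_{r_{\phi_0}\omega_P}$ is $\lambda^2\times\lambda^2$ ergodic, so by the Birkhoff theorem together with the disintegration above, for a fixed $f\in\LipP$ and a.e. $(x,\theta,y,\psi)\in X(P)$ the average $\frac1T\int_0^Tf(F^t(x,\theta),F^t(y,\psi))\,dt$ converges to $m_{\theta,\psi}(f)$. Since $f$ is bounded (being Lipschitz on the compact space $X(P)$) the averages are uniformly bounded, so dominated convergence yields $T_0=T_0(f)$ with $\int_{X(P)}\bigl|\frac1T\int_0^Tf(F^t(x,\theta),F^t(y,\psi))\,dt-m_{\theta,\psi}(f)\bigr|<\epsilon/4$ for all $T\ge T_0(f)$. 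The crux is then to bound $|m_{\theta_0,\phi_0}(f)-\int_{X(P)}f|$ uniformly. By hypothesis $r_{2\pi/M}\in D_P$, so every orbit $D_P\theta_0$ is invariant under rotation by $2\pi/M$, hence a finite union of cosets of $\langle r_{2\pi/M}\rangle$, each a $2\pi/M$--dense arithmetic progression in $S^1$. Setting $g(\theta',\phi')=|P|^{-2}\int_P\int_Pf(p,\theta',q,\phi')\,dp\,dq$, the path metric on $T_1(P)$ dominates the circle metric in the angular variables, so $|g(\theta',\phi')-g(\theta'',\phi'')|\le|\theta'-\theta''|+|\phi'-\phi''|$, and a coset--by--coset Riemann sum comparison of the average of $g$ over $D_P\theta_0\times D_P\phi_0$ with $(2\pi)^{-2}\int_{S^1}\int_{S^1}g$ gives $|m_{\theta_0,\phi_0}(f)-\int_{X(P)}f|\le 4\pi/M$, uniformly in $(\theta_0,\phi_0)$ and in $f\in\LipP$. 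Choosing $N_\epsilon=\lceil16\pi/\epsilon\rceil$, which does not depend on $P$, this is $<\epsilon/4$ once $M\ge N_\epsilon$, so for fixed $f$ and $T\ge T_0(f)$ we get $\int_{X(P)}|\frac1T\int_0^Tf\circ(F^t\times F^t)-\int_{X(P)}f|<\epsilon/2$.

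Finally I would make $T_0$ uniform over $f\in\LipP$. The functional $Q_T(f):=\int_{X(P)}|\frac1T\int_0^Tf(F^t(x,\theta),F^t(y,\psi))\,dt-\int_{X(P)}f|$ is unchanged by adding a constant to $f$ and satisfies $|Q_T(f)-Q_T(g)|\le 2\|f-g\|_\infty$; normalizing $f(x_0)=0$ at a fixed base point, $\LipP$ is equicontinuous and bounded, hence relatively compact in $C(X(P))$ by Arzel\`a--Ascoli. Covering it by finitely many sup--norm balls of radius $\epsilon/8$ centred at $f_1,\dots,f_k$, taking $T_0=\max_iT_0(f_i)$, and approximating an arbitrary $f$ by the nearest $f_i$ gives $Q_T(f)\le Q_T(f_i)+2\|f-f_i\|_\infty<\epsilon$ for all $T\ge T_0$, which is the assertion. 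I expect the only genuine difficulty to be the uniform estimate $|m_{\theta_0,\phi_0}(f)-\int_{X(P)}f|\le 4\pi/M$: it is precisely this step that forces the high--order rotation into the hypothesis and keeps $N_\epsilon$ independent of $P$, the point being that as $M\to\infty$ each invariant surface $X_{\theta_0}$ is carried by a $2\pi/M$--dense set of directions, so a $1$--Lipschitz observable has nearly the same average on all of them; the appeal to Theorem~\ref{thm:erg prod} and the Arzel\`a--Ascoli argument are routine.
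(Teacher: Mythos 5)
Your proposal is correct and follows essentially the same route as the paper's own (very terse) proof: unfold $P$ to a translation surface, invoke Theorem~\ref{thm:erg prod} to get ergodicity of the product flow for almost every pair of directions, and use that each invariant piece of $X(P)$ carries a $\frac{2\pi}{M}$--dense evenly spaced set of directions together with the $1$--Lipschitz hypothesis to compare the average over that piece with $\int_{X(P)}f$, so that $N_\epsilon$ depends only on $\epsilon$. The additional details you supply (the disintegration bookkeeping and the Arzel\`a--Ascoli argument making $T_0$ uniform over $\LipP$) are precisely what the paper's ``the corollary follows'' leaves implicit.
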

\begin{proof} 
Consider the flat surface $M_P$ obtained by unfolding $P$. 
 By Theorem~\ref{thm:erg prod} we have that for almost every pair of directions the product flow is ergodic on $M_P\times M_P$. Considering these flows on  $T_1(P)$, by our assumption on the group of reflections, they 
equidistribute in the product of  the table cross $M\geq N_\epsilon$ evenly spaced copies of a discrete set  in $S^1$. The corollary follows.
\end{proof}
\begin{proof}[Proof of Theorem \ref{thm:billiard} assuming Lemma \ref{lem:open}]
Let $n$ be given and, for all $k\in \mathbb N$,  let $U_k$ be the set of polygons $P$ with $n\geq 3$ sides so that
there exists $T>0$ such that, for all $f \in \LipP$,  $$\int_{X(P)} |\frac{1}{T}\int_0^Tf(F^t(x,\theta),F^t(y,\psi))dt-\int_{X(P)}f|<\frac 1 k.$$
By Lemma \ref{lem:open}, $U_k$ is open for all $k\in \mathbb N$ as union (over $T>0$) of a family of open sets. By  Corollary \ref{cor:dense} it is dense for all $k\in \mathbb N$. So by the Baire Category theorem there exists $P\in \cap_{k}U_k$.  Because 1-Lipschitz functions have dense span in the set of continuous functions, the product of the billiard flow with itself on $X(P)$ is ergodic.  In fact, if $P \in \cap_{k}U_k$ we have that, for every $k\in \mathbb N$, there
exists $T_k>0$ such that for all $f \in \LipP$,  
$$\int_{X(P)} |\frac{1}{T_k}\int_0^{T_k}f(F^t(x,\theta),F^t(y,\psi))dt-\int_{X(P)}f|<\frac 1 k.$$
Ergodicity then follows (for instance) from the ergodic theorem. The ergodicity of the product billiard flow implies that the billiard flow is weakly mixing.
\end{proof}

\begin{defin} (Y.~Vorobets \cite{vorobets},  Definition 2.1) 
We say a polygon $Q$ is a \emph{$\delta$-perturbation} of $P$ if there exists a homeomorphism $\phi_{P,Q}:P \to Q$ that establishes a one-to one correspondence between the vertices of the two polygons and so that the distance between the corresponding vertices is at most $\delta$. 
\end{defin}

 For every polygon $P$ there exists $\delta_P>0$ such that for $\delta <\delta_P$, any $\delta$-perturbation
$Q$ of $P$ has a triangulation whose triangles are in a bijective correspondence with triangles of a triangulation of $P$. For instance,
one can triangulate $P$ by diagonals in an arbitrary manner. Then for $\delta < d(P)/2$, with $d(P)$ the minimum
of the smallest non-zero distance between vertices, for each two vertices of $P$ which can be joined by a diagonal inside $P$, the corresponding vertices of $Q$ can also be joined by a diagonal inside $P$.  Then there
exists a triangulation of $Q$ corresponding to the above triangulation of $P$ by diagonals.

In the next lemma we assume the homeomorphism $\phi_{P,Q}$ is as above, affine on the triangles in a (fixed) triangulation of $P$ and takes them to the corresponding triangles of $Q$.  
\begin{lem}\label{lem:lip in lip}For every table $P$ and for every $\epsilon>0$,  there exists~$\delta>0$ such that if  $Q$ is a $\delta$-perturbation of $P$, then $$Lip_{1-\epsilon}(X(Q))\subset \phi_{P,Q}^{-1}(\LipP)\,.$$
\end{lem}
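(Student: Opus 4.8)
The plan is to show that pulling back a $(1-\epsilon)$-Lipschitz function on $X(Q)$ by $\phi_{P,Q}$ produces a $1$-Lipschitz function on $X(P)$, provided $\delta$ is chosen small enough. The pullback map $\phi_{P,Q}^{-1}$ acts on functions by $f \mapsto f \circ (\phi_{P,Q}\times\phi_{P,Q})$, where $\phi_{P,Q}\times\phi_{P,Q}$ is the induced map on $X(P) = T_1(P)\times T_1(P)$. The key observation is that the Lipschitz constant of a composition is controlled by the product of the Lipschitz constants: if $g$ is $(1-\epsilon)$-Lipschitz and $\phi_{P,Q}$ (lifted to $T_1$, and then squared) is $L$-Lipschitz, then $g \circ \phi$ is $L(1-\epsilon)$-Lipschitz. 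So it suffices to show that, for $\delta$ small enough, the induced homeomorphism $T_1(P) \to T_1(Q)$ has Lipschitz constant $L \leq (1-\epsilon)^{-1}$ with respect to the path metrics; since the metric on $X(P)$ is the product metric, the squared map $\phi_{P,Q}\times\phi_{P,Q}$ has the same Lipschitz constant.

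**First I would** make precise how $\phi_{P,Q}$ induces a map on unit tangent bundles. Since $\phi_{P,Q}$ is affine on each triangle of the fixed triangulation, its derivative on each triangle is a constant linear map $A_T \in GL(2,\mathbb{R})$; the induced map on $P\times S^1$ sends $(p,\theta)$ to $(\phi_{P,Q}(p), \overline{A_T\theta})$ where $\overline{A_T\theta}$ is the normalization of $A_T$ applied to the direction $\theta$ (and $T$ is the triangle containing $p$). One must check this descends to the quotient $T_1(P)$; this is where the assumption that $\phi_{P,Q}$ respects the triangulation and the vertex correspondence is used, together with the fact that the equivalence relation $\sim$ only identifies points on $\partial P$. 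I would note that on the sides of $P$ the affine pieces must be compatible so that the induced map is a well-defined homeomorphism of the quotients — this is essentially Vorobets' construction and I would cite \cite{vorobets} for the details.

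**The main quantitative step** is the estimate on $\|A_T - \mathrm{Id}\|$ and hence on the Lipschitz constant. As $\delta \to 0$, each vertex of $Q$ converges to the corresponding vertex of $P$, so each affine map $A_T$ (determined by where it sends the three vertices of $T$) converges to the identity, uniformly over the finitely many triangles $T$ — here one uses that $P$ is fixed, so the triangles are non-degenerate with areas bounded below, making the dependence of $A_T$ on the vertex positions Lipschitz. Consequently the path-metric Lipschitz constant of the induced map $T_1(P)\to T_1(Q)$ — which bounds how much $A_T$ distorts lengths in $P$ and angles in $S^1$ — tends to $1$ as $\delta\to 0$. Choosing $\delta$ so that this constant is at most $(1-\epsilon)^{-1}$ completes the argument, since then for $f\in Lip_{1-\epsilon}(X(Q))$ we get that $f\circ(\phi_{P,Q}\times\phi_{P,Q})$ has Lipschitz constant at most $(1-\epsilon)^{-1}(1-\epsilon) = 1$, i.e.\ lies in $\LipP$.

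**I expect the main obstacle** to be the careful bookkeeping around the path metric on $T_1(P)$ as a quotient of $P\times S^1$: one must verify that the induced map on the quotient is Lipschitz for the \emph{path} metric (not merely for the metric on $P\times S^1$ before quotienting), and that the distortion of this path metric by the piecewise-affine map is genuinely controlled by $\sup_T \|A_T\|$ and $\sup_T \|A_T^{-1}\|$. Handling paths that cross several triangles, and paths near the boundary where the quotient identification is in play, requires some care, but no new idea — it is a standard if tedious verification, and I would present it at the level of: "the induced map is bi-Lipschitz with constant tending to $1$ as $\delta \to 0$, by compactness of the triangulation and continuity of the affine pieces in the vertex data," citing \cite{vorobets} for the construction and leaving the routine metric estimates to the reader.
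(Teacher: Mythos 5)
The paper itself offers nothing here beyond the remark ``This is straightforward,'' and your proposal supplies exactly the intended mechanism: the affine pieces $A_T$ of $\phi_{P,Q}$ converge to the identity uniformly over the finitely many nondegenerate triangles as $\delta \to 0$, so the induced map on phase spaces is bi-Lipschitz with constant tending to $1$, and composing an $f \in Lip_{1-\epsilon}(X(Q))$ with it yields a $1$-Lipschitz function on $X(P)$ once that constant is below $(1-\epsilon)^{-1}$. That quantitative core, including the product structure on $X(P)=T_1(P)\times T_1(P)$ and the path-metric bookkeeping you flag at the end, is fine.

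The one step that does not work as written is the assertion that $(p,\theta)\mapsto(\phi_{P,Q}(p),\overline{A_T\theta})$ descends to a homeomorphism of the quotients $T_1(P)\to T_1(Q)$. The identification $\sim$ on $\partial P$ is by \emph{orthogonal} reflection in the side, and a linear map $A$ taking a side $s$ of $P$ to the corresponding side $s'$ of $Q$ conjugates reflection in $s$ to an \emph{oblique} reflection fixing $s'$; this equals the billiard reflection in $s'$ only if $A$ also sends the normal of $s$ to the normal of $s'$, which fails for a generic perturbation. Hence the pullback $f\circ\Phi$ of a function $f$ on $T_1(Q)$ is in general not constant on $\sim$-classes over $\partial P$, i.e.\ it is not a function on $T_1(P)$ at all, and the inclusion $Lip_{1-\epsilon}(X(Q))\subset \phi_{P,Q}^{-1}(Lip_1(X(P)))$ does not follow from your map as defined. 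The repair is routine and stays inside your framework: replace the derivative action on the $S^1$-factor by a family of rotations $\Psi(p)\in SO(2)$ chosen so that for $p$ on a side $s$, $\Psi(p)$ is rotation by the angle between $s$ and the corresponding side $s'$ (rotations do conjugate the reflection in $s$ to the reflection in $s'$), and extend $\Psi$ to the interior of $P$ by a Lipschitz interpolation; since all prescribed boundary rotations are within $O(\delta)$ of the identity (including near vertices, where the two competing values differ by $O(\delta)$), this can be done with distortion $O(\delta)$ depending only on $P$. The resulting map $(p,\theta)\mapsto(\phi_{P,Q}(p),\Psi(p)\theta)$ genuinely descends to a $(1+O(\delta))$-bi-Lipschitz homeomorphism $T_1(P)\to T_1(Q)$, and your composition argument then gives the stated inclusion.
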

This is straightforward.

\begin{lem}\label{lem:lip tolerance}
If $P$ is in the set described by \eqref{eq:open} then there exists $\epsilon'>0$ so that 
$$
\begin{aligned} \int_{X(P)} \Big\vert \frac{1}{T}\int_0^Tf(F^t(x,\theta),F^t(y,\psi))dt-&\int_{X(P)}f \Big\vert<\epsilon   \\
& \text{ for all }f\in Lip_{1+\epsilon'}(X(P)).
 \end{aligned} 
$$
\end{lem}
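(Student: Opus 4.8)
The plan is to exploit the elementary algebraic properties of the functional
\[
L_T(f) := \int_{X(P)}\Bigl\vert \tfrac1T\int_0^T f(F^t(x,\theta),F^t(y,\psi))\,dt - \int_{X(P)} f\Bigr\vert \,.
\]
Namely: $L_T$ is unchanged when a constant is added to $f$; it is positively homogeneous, $L_T(cf)=c\,L_T(f)$ for $c>0$; and it is sub-additive, $L_T(f+g)\le L_T(f)+L_T(g)$. In particular $L_T(f)\le 2\Vert f\Vert_\infty$ and $L_T$ is Lipschitz continuous on $C(X(P))$ for the sup norm. The hypothesis that $P$ lies in the set \eqref{eq:open} is exactly that $L_T(f)<\epsilon$ for every $f\in\LipP$, and since $L_T$ ignores additive constants this is equivalent to $L_T(f)<\epsilon$ for every $f$ in the subset $\mathcal L\subset\LipP$ consisting of $1$-Lipschitz functions with $\int_{X(P)}f=0$.

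The first step is to observe that $\mathcal L$ is a compact subset of $C(X(P))$. Indeed $X(P)=T_1(P)\times T_1(P)$ is a compact metric space of finite diameter $D$ (for the path metric inherited from $P\times S^1$, compactness following from that of $P\times S^1$ and the fact that the identification $\sim$ is closed), so the functions in $\mathcal L$ are equicontinuous and, since their integral vanishes, uniformly bounded by $D$; moreover $\mathcal L$ is closed under uniform limits. By Arzel\`a--Ascoli, $\mathcal L$ is compact, so the continuous functional $L_T$ attains a maximum $\epsilon_0$ on $\mathcal L$, and by the reduction above $\epsilon_0<\epsilon$.

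The second step is purely formal: choose $\epsilon'>0$ with $(1+\epsilon')\epsilon_0<\epsilon$ (any $\epsilon'>0$ works if $\epsilon_0=0$). Given $f\in Lip_{1+\epsilon'}(X(P))$, the function $g:=(1+\epsilon')^{-1}\bigl(f-\int_{X(P)}f\bigr)$ is $1$-Lipschitz with zero integral, hence $g\in\mathcal L$, and using invariance under additive constants together with positive homogeneity,
\[
L_T(f)=L_T\Bigl(f-\int_{X(P)}f\Bigr)=(1+\epsilon')\,L_T(g)\le(1+\epsilon')\epsilon_0<\epsilon\,,
\]
which is the assertion.

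The only step that requires any real content is the compactness argument, i.e.\ upgrading the pointwise bound ``$L_T(f)<\epsilon$ for each $f\in\mathcal L$'' to the uniform bound ``$\sup_{\mathcal L}L_T<\epsilon$''; everything else is bookkeeping with the homogeneity and sub-additivity of $L_T$. I therefore expect the Arzel\`a--Ascoli step (and the minor verification that the path metric really makes $T_1(P)$ a compact metric space) to be the main — and only — non-routine point, and it is entirely standard.
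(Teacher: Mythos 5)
Your proof is correct and takes essentially the same route as the paper: a compactness argument in the sup norm to upgrade the pointwise bound on $1$-Lipschitz functions to a uniform strict bound, followed by rescaling to handle $f\in Lip_{1+\epsilon'}(X(P))$. Your normalization by subtracting the mean (so that Arzel\`a--Ascoli genuinely applies, since $\LipP$ itself contains all constants and is not sup-norm compact) is in fact a slightly more careful rendering of the paper's one-line compactness claim.
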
 
\begin{proof} 
The table $P$ is compact and $\LipP$ is bounded and  equicontinuous, so by the Arzel\`a-Ascoli Theorem, $\LipP$ is compact in $\|\cdot\|_{\sup}$. Thus, there exists $0<\epsilon''<\frac 1 9$ so that 
\begin{equation}
\label{eq:eps2}
\begin{aligned}  \int_{X(P)} \Big\vert\frac{1}{T} \int_0^Tf(F^t(x,\theta),F^t(y,\psi))dt-&\int_{X(P)}f \Big\vert<\epsilon-\epsilon'' \,,   \\ &\text{ for all }f\in \LipP.
\end{aligned}
\end{equation}
Indeed, for each $f \in \LipP$ there exists $\epsilon_f$ so that 
$$\begin{aligned}  \int_{X(P)} \Big\vert\frac{1}{T} \int_0^Tf(F^t(x,\theta),F^t(y,\psi))dt-&\int_{X(P)}f \Big\vert<\epsilon-\epsilon_f.
\end{aligned}$$
Thus, there is an open neighborhood $U_f$ of $f$  that, for all $h \in U_f$,
$$\begin{aligned}  \int_{X(P)} \Big\vert\frac{1}{T} \int_0^Th(F^t(x,\theta),F^t(y,\psi))dt-&\int_{X(P)}h \Big\vert<\epsilon-\frac 1 2 \epsilon_f\,.
\end{aligned}$$
 The claim in formula \eqref{eq:eps2} follows by compactness.

Let then $\epsilon'=\frac 1 {9}(\frac 1 {1-\epsilon''}-1)$ and $h \in Lip_{1+\epsilon'}(X(P))$.
Now  from \eqref{eq:eps2} applied to $f=h/(1+ \epsilon')$ we derive the following bound:
$$\begin{aligned}  \int_{X(P)} \Big\vert\frac{1}{T} \int_0^Th(F^t(x,\theta),F^t(y,\psi))dt-&\int_{X(P)}h \Big\vert<(1+\epsilon')(\epsilon-\epsilon'')<\epsilon.
\end{aligned}$$
The last inequality uses that $\epsilon' \epsilon<(1+\epsilon')\epsilon''$. 
 \end{proof}

For any polygon $P$, let $\pi^{(1)} : P\times S^1 \to P$ and $\pi^{(2)} : P\times S^1 \to S^1$ denote the canonical projections.

\begin{prop}(Vorobets \cite[Proposition 2.3]{vorobets})  
Let $Q$ be a $\delta$-perturbation of $P$. For each $t>0$ there exists $B \subset P \times S^1$ dependent on the polygons $P,Q$, the map $\phi:= \phi_{P,Q}$ and $t$ and of measure at most $C_3(C_1t+C_2)^3 \delta$ such that for each $(x,\theta) \in P \times S^1 \setminus B$ and for each $\tau$, $0\leq \tau \leq t$, at least one of the following two possibilities hold:
\begin{enumerate}
\item The distance between $\pi_P^{(1)}F_p^t(x,v)$ and $\phi^{-1}(\pi_Q^{(1)}F_Q^t(\phi(x),v)$ is at most $C_4(C_1t+C_2)^2 \delta$ and the angle between the directions of $\pi^{(2)}F^t_P(x,v)$ and $\pi^{(2)}F^t_Q(\phi(x),v)$ is at most $C_5(C_1t+C_2)\delta$. 
\item The points $\pi_P^{(1)}F_p^t(x,v)$ and $\pi_Q^{(1)}F_Q^t(\phi(x),v)$  lie at a distance at most $C_6(C_1t+C_2)^2\delta$ from the boundaries of $P$ and $Q$ respectively.
\end{enumerate}
The constants $C_1,\dots,C_6$ are positive and depend only on $P$. 
\end{prop}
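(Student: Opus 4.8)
The plan is to couple the two billiard motions and run a discrete Gr\"onwall estimate over successive reflections, defining the bad set $B$ to consist of exactly those initial conditions whose $P$-orbit comes dangerously close to a vertex within time $t$ --- the one feature that could break the coupling. Throughout, write $\Lambda=\Lambda(t):=C_1t+C_2$ for the classical upper bound on the number of reflections that a billiard orbit of $P$ which is defined on $[0,t]$ (i.e.\ does not run into a vertex) makes with $\partial P$; this linear-in-$t$ bound is the only genuinely \emph{polygonal} input, and I would quote it (it is where one uses, e.g., that a wedge of angle $\alpha$ can be crossed at most $O(\pi/\alpha)$ times). I would also record the elementary geometry of $\phi=\phi_{P,Q}$: being affine on each triangle of a fixed triangulation of $P$ and mapping it onto the corresponding triangle of $Q$ whose vertices lie within $\delta$ of the images of the old ones, $\phi$ satisfies $\|D\phi-\mathrm{Id}\|\le c(P)\delta$ on each triangle (and likewise $\phi^{-1}$); hence $\phi$ distorts lengths and directions by $O(\delta)$ along any segment it is applied to, each side of $Q$ is tilted by $O(\delta)$ relative to the corresponding side of $P$, and reflection of a unit vector across a side of $Q$ differs by $O(\delta)$ from reflection across the corresponding side of $P$.

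Next I would fix the scale $\rho=\rho(t):=C_6\Lambda(t)^2\delta$ and set $B$ to be the set of $(x,\theta)\in P\times S^1$ whose $P$-orbit enters the $\rho$-neighbourhood of some vertex of $P$ at some time in $[0,t]$, together with the null set of orbits not defined on $[0,t]$. The measure bound for $B$ comes from flowing a $\rho$-disk about each vertex: since the Liouville measure is $F_P$-invariant and there are finitely many vertices, a standard flux (Kac-type) estimate shows that the set of states whose orbit meets the $\rho$-neighbourhood of a fixed vertex within time $t$ has measure $O(\rho^2+\rho t)$, hence $O(\rho t)$ for $\rho\le t$; summing over vertices and using $t\lesssim\Lambda(t)$ gives $|B|\le C_3(C_1t+C_2)^3\delta$.

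For $(x,\theta)\notin B$ I would then prove, by induction on the reflection index $k\le\Lambda(t)$, that the $Q$-orbit of $(\phi(x),\theta)$ and the $P$-orbit of $(x,\theta)$ reflect off corresponding sides in the same order, with the $\phi^{-1}$-image of the $k$-th $Q$-collision point within $e_k$ of the $k$-th $P$-collision point and with post-collision directions within angle $f_k$, where $f_k\le f_{k-1}+O(\delta)$ (a reflection is an isometry, so only the side tilt and the $\phi$-distortion enter) and $e_k\le e_{k-1}+L_{k-1}\bigl(f_{k-1}+O(\delta)\bigr)$ with $L_{k-1}$ the intervening free-flight length. Since $\sum_k L_k\le t$, the recursions give $f_k=O(\Lambda(t)\delta)$ and $e_k=O\bigl(t\,\Lambda(t)\,\delta\bigr)=O(\Lambda(t)^2\delta)$ uniformly in $k$; choosing $C_4,C_5,C_6$ so that $e_k$ and $f_k\,\mathrm{diam}\,P$ are both $\ll\rho$, the definition of $B$ keeps both orbits at distance $\ge\rho/2$ from all vertices, so near each collision point $\partial Q$ reduces to a single (nearly straight) edge and the combinatorial step of the induction goes through. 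Interpolating across free flights then yields, for every $\tau\in[0,t]$: either $\pi_P^{(1)}F_P^\tau(x,\theta)$ and $\phi^{-1}\bigl(\pi_Q^{(1)}F_Q^\tau(\phi(x),\theta)\bigr)$ are $C_4\Lambda(t)^2\delta$-close with directions $C_5\Lambda(t)\delta$-close, or the $P$-point lies within $\rho$ of $\partial P$ --- in which case the $Q$-point lies within $\rho+e_k<C_6\Lambda(t)^2\delta$ of $\partial Q$ --- which are exactly the two alternatives claimed.

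The hard part will not be the Gr\"onwall estimate but the \emph{combinatorial} control: one must guarantee that the tiny perturbation does not make the two orbits reflect off different sides or in a different order, and in particular handle grazing collisions, where the perturbed collision point and time can be displaced by much more than $O(\delta)$. The point is that such a desynchronization can only occur while the $P$-orbit is within $O(e_k)\le\rho$ of $\partial P$, so it is absorbed by the ``near the boundary'' alternative rather than spoiling alternative (1); and since near-vertex passages are the only way $\partial Q$ can fail to look like a single edge near a collision, excluding them in $B$ is what makes the induction self-sustaining and what couples the geometric scale $\rho$ to the divergence bound $e_k=O(\Lambda^2\delta)$, thereby fixing the exponents $2$, $2$, $3$ in the statement. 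The other obstacle, used above as a black box, is the classical linear bound $\Lambda(t)=C_1t+C_2$ on the number of reflections of a vertex-avoiding orbit in a polygon.
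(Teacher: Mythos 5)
The paper does not actually prove this proposition: it is quoted from Vorobets \cite{vorobets} (his Proposition 2.3), so there is no internal argument to measure you against. Your sketch reconstructs what is essentially the standard (and Vorobets's) argument, and the bookkeeping comes out right: the linear collision bound $\Lambda(t)=C_1t+C_2$, the exclusion set $B$ of orbits entering a $\rho$-neighbourhood of a vertex with $\rho\asymp\Lambda^2\delta$, the flux estimate $|B|=O(\rho^2+t\rho)=O(\Lambda^3\delta)$ per vertex, and the additive recursions $f_k\le f_{k-1}+O(\delta)$, $e_k\le e_{k-1}+L_{k-1}(f_{k-1}+O(\delta))$ reproduce exactly the exponents $1,2,3$ of the statement.

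Two steps of your induction need more care than the sketch provides. First, the additive recursion for $e_k$ is only valid if you compare the two orbits at \emph{equal times} and unfold (straighten) each reflection as it occurs: corresponding collisions happen with a time lag of order $e_k/\sin\beta$ at a grazing collision of incidence $\beta$, and a comparison run from collision point to collision point picks up factors like $\sin\beta/\sin\beta'\approx 1+O(f_k/\beta)$, which is not obviously summable over $\Lambda$ collisions; unfolding across the side eliminates the reflection, and the lag window is precisely where the directions differ by $\approx 2\beta$, i.e.\ where alternative (1) genuinely fails and must be covered by alternative (2) --- which it is, since during that window both points are within $O(e_k+\delta)$ of the respective boundaries. (Vorobets's proof is cleanest phrased globally this way: develop both orbits into the plane and compare the two sequences of reflecting isometries, whose rotation parts differ by $O(k\delta)$ and translation parts by $O(k\delta\,t)$.) Second, the combinatorial matching has two failure modes, not one: the $Q$-orbit may miss the side the $P$-orbit hits, or hit a side the $P$-orbit does not; in either case you must show the $P$-orbit passes within about $e_k+O(\Lambda\delta\,\mathrm{diam}\,P)$ of a vertex, which contradicts $(x,\theta)\notin B$ only because $C_6$ is chosen large relative to $C_4,C_5$ --- this bootstrap is exactly what couples $\rho$ to $\Lambda^2\delta$ and hence forces the cubic measure bound, so it deserves to be written out. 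Finally, your alternative (2) gives $\rho+e_k+O(\delta)$ rather than $\rho$ for the $Q$-point, which is harmless after renaming $C_6$. With these points made precise, your proposal is a correct proof of the quoted result.
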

From this proposition, we derive:
\begin{cor}For any $f \in Lip_c(X(P))$, $T$ and $\epsilon>0$ there exists $\delta>0$ so that for any $Q$, a $\delta$-perturbation of $P$ we have 
\begin{multline}\label{eq:close wm averages}
\int_{X(Q)} \Big\vert \int_0^T f\circ \phi_{P,Q}^{-1}(F^t(x,\theta),F^t(y,\psi))dt  \\  -\int_0^Tf(F^t(x,\theta),F^t(y,\psi))dt \Big\vert <\epsilon. 
\end{multline}
\end{cor}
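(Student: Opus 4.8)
The plan is to deduce the estimate from Vorobets' proposition, applied to each coordinate of the product flow: away from a small bad set of initial data and for all but a short set of times, the orbit in $Q$ shadows the orbit in $P$ in position and direction (alternative (1)), and the Lipschitz property of $f$ turns this into pointwise closeness of the two integrands; the exceptional times are precisely those at which an orbit runs near $\partial P$, and the $F_P^{t}$-invariance of the Liouville measure makes their total contribution $O(\delta)$ after integrating over phase space.

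First I would fix $f\in Lip_c(X(P))$, $T>0$, $\epsilon>0$, and a $\delta$-perturbation $Q$ of $P$ with $\delta<\delta_P$, so that $\phi:=\phi_{P,Q}$ is piecewise affine with linear part $O(\delta)$-close to the identity on each triangle of the fixed triangulation of $P$; a routine check shows the induced map of unit tangent bundles, still denoted $\phi$, is $O(\delta)$-close to the identity in the path metric on $T_1(P)$ and distorts the Liouville measure by a factor $1+O(\delta)$. Reading the second integrand in \eqref{eq:close wm averages} as $f\bigl(F_P^{t}\phi^{-1}(x,\theta),F_P^{t}\phi^{-1}(y,\psi)\bigr)$ (transfer to $P$, then flow in $P$) and changing variables from $X(Q)$ to $X(P)$ at the cost of the factor $1+O(\delta)$, it suffices, by the triangle inequality for $\int_0^{T}$, to bound
\[
J:=\int_{X(P)}\int_0^{T}\Bigl|\,f\bigl(\phi^{-1}F_Q^{t}\phi(u),\phi^{-1}F_Q^{t}\phi(w)\bigr)-f\bigl(F_P^{t}(u),F_P^{t}(w)\bigr)\Bigr|\,dt\,d\mu_P(u)\,d\mu_P(w)
\]
by $O(\delta)$. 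I then apply the quoted proposition with $t=T$ to get a bad set $B\subset T_1(P)$ with $\mu_P(B)\le A_1\delta$ outside of which, for every $\tau\in[0,T]$, alternative (1) or (2) holds, with all displacements bounded by $A_2\delta$ (absorbing the polynomial factors in $T$, the constants $C_4,C_5,C_6$, and the $O(\delta)$ distortion of $\phi$).

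Now split $X(P)\times X(P)\times[0,T]$ into the set $\mathcal G$ of $(u,w,\tau)$ with $u\notin B$, $w\notin B$, and alternative (1) holding at time $\tau$ for both coordinates, and its complement $\mathcal G^{c}$. On $\mathcal G$, combining the position and angle estimates of (1) with the $O(\delta)$ closeness of $\phi$ to the identity shows the two arguments of $f$ in the integrand lie within $A_3\delta$ of each other in the product path metric, so the integrand is at most $cA_3\delta$, which contributes $O(\delta)$ to $J$. On $\mathcal G^{c}$ the integrand is at most $c\operatorname{diam}(X(P))$, so it remains to bound $\mu_P\otimes\mu_P\otimes\mathrm{Leb}(\mathcal G^{c})$. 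But $\mathcal G^{c}$ is contained in $\{u\in B\}\cup\{w\in B\}$, of measure $O(\delta)$, together with the set where alternative (1) fails for $u$ or for $w$ at $\tau$; there alternative (2) directly gives that $\pi^{(1)}F_P^{\tau}(u)$, respectively $\pi^{(1)}F_P^{\tau}(w)$, lies within $A_2\delta$ of $\partial P$. By Tonelli's theorem (the integrand is nonnegative) I integrate in $\tau$ first; by the $F_P^{\tau}$-invariance of the Liouville measure together with the elementary bound $\mu_P\bigl(\{v:\operatorname{dist}(\pi^{(1)}v,\partial P)<r\}\bigr)\le A_4 r$, this set also has measure $O(\delta)$. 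Summing the three contributions gives $J=O(\delta)$, and choosing $\delta$ small enough (and below $\delta_P$) yields $J<\epsilon$, which is the corollary.

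The step I expect to be the main obstacle is the control of the near-boundary times: a fixed neighborhood of $\partial P$ need not be dynamically small, so one really must swap the order of the time- and phase-space-integrations and invoke invariance of the Liouville measure under the billiard flow, after which the bound reduces to the trivial fact that an $r$-neighborhood of $\partial P$ has measure $O(r)$. The remaining points are routine verifications: the bi-Lipschitz and measure-distortion estimates for the piecewise affine map $\phi_{P,Q}$; the legitimacy of applying the proposition with initial direction taken equal to that of $\phi(u)$ (so that its $F_Q^{\tau}$-image is exactly $F_Q^{\tau}\phi(u)$); and the bookkeeping of the $\delta$-independent constants $A_1,\dots,A_4$.
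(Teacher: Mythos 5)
Your argument is correct and is exactly the derivation the paper leaves implicit: the corollary is stated there without proof as a direct consequence of Vorobets' proposition, and your splitting into the shadowing alternative (handled by the Lipschitz bound on the good set) and the bad/near-boundary set (handled by Tonelli plus invariance of the Liouville measure and the $O(r)$ bound on the measure of a boundary collar) is the intended route. In particular, your remark that one must integrate over phase space first rather than try to bound the near-boundary time along each individual orbit is precisely the right way to treat alternative (2).
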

Note for fixed $c$ and $T$, $\delta$ can be chosen uniformly over $Lip_c(X(P))$

\begin{proof}[Proof of Lemma \ref{lem:open}] Let $\epsilon<\frac 1 4$.  It suffices to show that for any $P$ 
 which belongs to the set in formula~\eqref{eq:open}   there exists $\delta_P>0$ so that any $\delta_P$-perturbation also belongs to the set~\eqref{eq:open}. Because $P$ belongs to the set~\eqref{eq:open}, it satisfies the inequality in
~\eqref{eq:open} for $\epsilon-\epsilon'''$ for some $\epsilon'''>0$. We choose $\delta>0$ so that Lemma \ref{lem:lip in lip} implies that, if $Q$ is a $\delta$-perturbation of $P$, then $Lip_1(X(Q))\subset Lip_{1+\frac {\epsilon'''}9}(X(P))$, formula \eqref{eq:close wm averages} is satisfied with $\epsilon=\frac {\epsilon'''}9$
  and
$$
\Big\vert \int_{X(Q)} f\circ \pi_{P,Q}^{-1}-\int_{X(P)}f \Big\vert <\frac {\epsilon'''}9\,.
$$
It follows that any $\delta$-perturbation of $P$ satisfies \eqref{eq:open}. 
\end{proof}

\end{document}